\begin{document} 
\newcommand{\s}{\vspace{0.2cm}} 
\newcommand{\bSalg}{\bar{\mathcal S}_{alg}} 
\newcommand{\bStop}{\bar{\mathcal S}_{top}} 
\newcommand{\bSun}{\bar{\mathcal S}_{un}} 
\newcommand{\bbb}{\mathbb} 
\newcommand{\Ch}{\widehat{\mathbb C}} 
\newcommand{\Hh}{{\mathbb H}^3} 
\newcommand{\Hw}{{\mathbb H}^2} 
\newcommand{\M}{\mathcal M} 
\newcommand{\Mzp}{{\mathcal M}_{0,2p}} 
\newcommand{\Nalg}{{\mathcal N}_{alg}} 
\newcommand{\ngn}{^{-1}} 
\newcommand{\noi}{\noindent} 
\newcommand{\Ntop}{{\mathcal N}_{top}} 
\newcommand{\Nun}{{\mathcal N}_{un}} 
\newcommand{\PGLR}{PGL(2,{\mathbb R})} 
\newcommand{\PPGLR}{P^+GL(2,{\mathbb R})} 
\newcommand{\PSLC}{PSL(2,{\mathbb C})} 
\newcommand{\PSLR}{PSL(2,{\mathbb R})} 
\newcommand{\R}{\mathcal R} 
\newcommand{\Rc}{{\mathcal R}^c} 
\newcommand{\Salg}{{\mathcal S}_{alg}} 
\newcommand{\Salgc}{{\mathcal S}_{alg}^c} 
\newcommand{\SLR}{SL(2,{\mathbb R})} 
\newcommand{\SPMLR}{S^{\pm}L(2,{\mathbb R})} 
\newcommand{\Stab}{\mathop{\rm Stab}\nolimits} 
\newcommand{\Stop}{{\mathcal S}_{top}} 
\newcommand{\Sun}{{\mathcal S}_{un}} 
\newcommand{\Salgn}{{\mathcal S}_{alg}^n} 
\newcommand{\Salgo}{{\mathcal S}_{alg}^0} 
\newcommand{\Stopc}{{\mathcal S}_{top}^c} 
\newcommand{\Stopn}{{\mathcal S}_{top}^n} 
\newcommand{\Stopo}{{\mathcal S}_{top}^0} 
\newcommand{\Suno}{{\mathcal S}_{un}^0} 
\newcommand{\Sunc}{{\mathcal S}_{un}^c} 
\newcommand{\Sunn}{{\mathcal S}_{un}^n} 
\newcommand{\T}{\mathcal T} 
\newtheorem{Thm}{Theorem}[section] 
\newtheorem{Prop}{Proposition}[section] 
\newtheorem{Cor}{Corollary}[section] 
\newtheorem{Lem}{Lemma}[section] 
\newtheorem{Remark}{Remark}[section]

\title[On neoclassical Schottky groups]{On neoclassical Schottky groups} 
\author{Rub\'en Hidalgo}
\address{Departamento de Matem\'atica, UTFSM, Valpara\'{\i}so, Chile}
\email{ruben.hidalgo@usm.cl}
\author{Bernard Maskit}
\address{Math. Departament, SUNY at Stony Brook, N.Y., USA}
\email{bernie@math.sunysb.edu} 
\thanks{Partially supported by Projects Fondecyt 1030252, 1030373, 7000715 and UTFSM 12.03.21} 
\subjclass[2000]{30F10; 30F40}

\begin{abstract} The goal of this paper is to describe a theoretical 
construction of an infinite collection of non-classical Schottky groups. 
We first show that there are infinitely many non-classical 
noded Schottky groups on the boundary of Schottky space, and we show that 
infinitely many of these are ``sufficiently complicated''. 
We then show that every Schottky group in an appropriately defined relative conical neighborhood 
of any sufficiently complicated noded Schottky group is necessarily non-classical. 
Finally, we construct two examples; the first is a noded Riemann surface of 
genus $3$ that cannot be uniformized by any neoclassical Schottky group (i.e., 
classical noded Schottky group); the second is an explicit example of a 
sufficiently complicated noded Schottky group in genus $3$. \end{abstract}
\maketitle 
\begin{center} 
\end{center}

\section*{Introduction}
It was shown by Marden \cite{marden:schottky} that there are Schottky groups
that are not classical (that is, they cannot be defined by circles); see also
\cite{J-M-M:schottky}. An explicit family of examples of non-classical Schottky groups was
constructed by Yamamoto \cite{yamamoto:example}. The main goal of this paper is
to give a theoretical construction of infinitely many non-classical Schottky
groups. A geometrically finite Kleinian group on the boundary of Schottky space
(i.e., a geometrically finite free Kleinian group that contains parabolic
elements) is a {\em noded Schottky group}; the noded Schottky group is {\em
neoclassical} if it can be defined by Euclidean circles. We show that, for each
genus $\ge 2$, there are infinitely many topologically distinct noded Schottky
groups; that only finitely many of these topologically distinct noded groups can
be neoclassical; and that all but finitely many of these non-neoclassical noded
Schottky groups are sufficiently complicated; that is, they are so far from
being neoclassical that ``nearby'' Schottky groups are also not classical.
It is an open question whether every closed Riemann surface can be uniformized
by a classical Schottky group. We give a family of examples in genus $3$ of
noded Riemann surfaces that cannot be uniformized by neoclassical Schottky
groups, thus answering in the negative the corresponding question for noded
Riemann surfaces in the Deligne-Mumford compactification of moduli space.
The basic ideas used in the proofs of the above facts stem from the following
facts. Let $G$ be a noded Schottky group (i.e., a free, geometrically finite
Kleinian group), with infinitely many components (of its set of discontinuity),
all simply-connected. Then, if $G$ is free on $p$ generators, there are $2p$
almost disjoint simple closed curves on the Riemann sphere bounding a
disconnected region, so that there are generators of $G$ pairing these curves.
Such curves are called {\em defining loops} for $G$.
Since each component is simply connected, every defining loop must pass through
some number of components; it must enter and leave each such component by
passing through a parabolic fixed point. It is essentially immediate that if two
distinct translates of some defining loop or loops both pass through two
distinct parabolic fixed points, then they cannot both be circles. In
particular, if two translates of some defining loop or loops both pass through
some component, entering and leaving the component by the same parabolic fixed
points, then they cannot both be circles. We will show below that if there are
two distinct translates of some defining loop or loops which pass through two
successive components, entering and leaving each through the same parabolic
fixed points, then there are necessarily four points on at least one of them so
that the imaginary part of the cross-ratio of these four points is uniformly
bounded away from zero.

One can regard the noded Schottky groups as being on the boundary of
the space of Schottky groups. Other boundaries of the algebraic topology
space of algebraically marked Schottky groups have been studied by, among
others, Gerritzen and Herrlich \cite{Gerritzen:Schottky}, Hidalgo
\cite{Hidalgo:NodedSchottky} and Sato \cite{Sato:Augmented}.

\section{Schottky groups and noded Schottky groups}
\subsection{Kleinian groups} Throughout this paper, we will use the term Kleinian group to denote a discrete subgroup of $\PSLC$, which we identify as both the group of orientation-preserving conformal homeomorphisms of the extended complex plane, $\Ch$, and as the group of orientation-preserving isometries of hyperbolic 3-space. 

Every Kleinian group $G$ decomposes $\Ch$ into two sets; the limit set $\Lambda=\Lambda(G)$, and its complement, the regular set (or discontinuity set)  $\Omega=\Omega(G)$. The limit set is defined as the set of limit points of the set $\{g(x):g\in G\}$, where $x$ is any point in hyperbolic 3-space.

It is well known that the limit set is either finite, consisting of at most two points, or is an uncountable perfect nowhere dense subset of $\Ch$, or is equal to $\Ch$. The Kleinian group is {\em elementary} if $\Lambda$ consists of at most two points. It is of the {\em first kind} if $\Lambda=\Ch$; it is of the {\em second kind} otherwise.

\subsection{Definitions of Schottky groups}
There are several equivalent definitions of a Schottky group; we start with
the traditional one.

\s
\noindent
{\bf Definition 1:} A Schottky group is a Kleinian group $G$
generated by transformations $a_1,\ldots,a_p$, where there are $2p$ disjoint
simple loops, $C_1,C'_1,\ldots,C_p,C'_p$, bounding a common domain $D$ in the
extended complex plane $\Ch$, where $a_i(C_i)=C'_i$, and $a_i(D)\cap
D=\emptyset$, $i=1,\ldots,p$.

\s

In the above, the set of loops, $C_1,\ldots,C'_p$ are called
{\em defining loops}
for the {\em Schottky generators} $a_1,\ldots,a_p$. If we do not need the
generators, we will sometimes simply refer to the set of loops as defining
loops. The value $p$ is called the genus of the Schottky group and we
sometimes refer to $G$ as a Schottky group of genus $p$.
Let $\Omega=\Omega(G)$ be the set of discontinuity of $G$.
It is well known that $\Omega=\Omega(G)$ is connected and dense in $\Ch$,
and that $S=\Omega/G$ is a closed Riemann surface of genus $p$.
Also, if we let $V_i$ be the projection of $C_i$ to
$S$, then $V_1,\ldots,V_p$ is a set of simple disjoint homologically
independent loops. We also call $V_1,\ldots,V_p$ a {\em set of defining
loops} for $G$.
It is also well known that a Schottky group is finitely generated, free and purely
loxodromic. It was shown in \cite{bm:schottky} that these properties
characterize these groups within the class of Kleinian groups of the second kind.

\s
\noindent
{\bf Definition 2:} A Schottky group is a finitely generated, free Kleinian
group of the second kind, in which every non-trivial element is loxodromic
(including hyperbolic).
It is well known that every Schottky group is geometrically finite. A
geometrically finite Kleinian group without parabolics is either co-compact or
of the second kind. Since a free group cannot be the fundamental group of a
closed 3-manifold, we can restate this second definition as follows.

\s
\noindent
{\bf Definition 2':} A Schottky group is a free geometrically finite Kleinian
group without parabolics. 

\s

The following equivalent definition is essentially Koebe's retrosection theorem;
a modern proof can be found in \cite[pp. 312 ]{bm:book}.

\s
\noindent
{\bf Definition 3:} Let $S$ be a closed Riemann surface of genus $p$, and
let $V_1,\ldots,V_p$ be $p$ homologically independent disjoint simple closed
loops on $S$. Each of these loops is homologically non-trivial, hence also homotopically non-trivial.
Then the highest regular covering of $S$ for which
$V_1,\ldots,V_p$ all lift to loops is conformally an open subset of $\Ch$; the
group of deck transformations for this covering, as a group of conformal maps
of a plane domain, is a Schottky group of genus $p$.

\s

We include the fourth definition, which we will not use, for the sake of
completeness; its equivalence with definition 2 is well known.

\s
\noindent
{\bf Definition 4:} Let $M$ be a handlebody of genus $p$ endowed with a
geometrically finite complete hyperbolic structure, where the injectivity radius
is bounded from below by a positive constant. Then the universal covering group of $M$, acting on
hyperbolic $3$-space $\Hh$, is a Schottky group of genus $p$.

\subsection{Definitions of noded Schottky groups} 
It is clear that if we permit
the defining loops of a Schottky group to touch without crossing at a finite
number of points, we will still generate a free Kleinian group of the second
kind. However, in order to guarantee that the common outside of all the curves
is a fundamental domain, we need to ensure that these new points of contact
are parabolic fixed points (see \cite{bm:combiv}).

\s
\noindent
{\bf Definition 1:} A {\em noded Schottky group} is a Kleinian group $\widehat
G$ generated by M\"obius transformations $\widehat a_1,\ldots,\widehat a_p$,
where there are $2p$ simple loops, $\widehat C_1,\widehat C'_1,\ldots,\widehat
C_p,\widehat C'_p$, satisfying the following conditions:
\begin{itemize}
\item[(i)] Any two of these loops intersect in at most finitely many points;
\item[(ii)] there are pairwise disjoint open discs, $\widehat D_{1}$,
$\widehat D'_{1}$,..., $\widehat D_{p}$, $\widehat D'_{p}$, so that the boundary
of $\widehat D_{i}$ is $\widehat C_{i}$
and the boundary of $\widehat D'_{i}$ is $\widehat C'_{i}$;
\item[(iii)] $\widehat a_{i}(\widehat C_{i})=\widehat C'_{i}$, and
$\widehat a_i(\widehat D_i)\cap \widehat D'_i=\emptyset$, $i=1,...,p$;
\item[(iv)] each point of intersection of any two defining loops
is a fixed point of a parabolic transformation in $\widehat G$.
\end{itemize}

\s

As above, the loops $\widehat C_i$, $\widehat C'_i$ are called {\em defining loops}
for the noded Schottky group, and the generators, $\widehat a_1,\ldots,\widehat a_p$
are called {\em noded Schottky generators}. The value $p$ is called the genus
of $G$ and we also say that $G$ is a noded Schottky group of genus $p$.
It was shown in \cite{bm:combiv} that every noded Schottky group is geometrically
finite; that every parabolic element of $\widehat G$ is a
conjugate of a power of one of the transformations fixing a common point of
two of the defining loops and, if we let $\widehat D$ denote the complement of
the union of the closures of $\widehat D_{1}$, $\widehat D'_{1}$,..., $\widehat D_{p}$,
$\widehat D'_{p}$, that $\widehat D$ is a fundamental domain for $\widehat G$.
The fact that a geometrically finite free Kleinian group of the second kind is
a noded Schottky group was proved in \cite{bm:free}. The fact that every
geometrically finite free Kleinian group is of the second kind was proved in
\cite{Hidalgo:NodedSchottky}.

\s
\noindent
{\bf Definition 2:} A {\em noded Schottky group} is a geometrically finite
free Kleinian group.

\s

Since we will not need it, we will not state the (complicated and difficult)
analog of definition 3 for noded Schottky groups. However, we do note that the
analog of definition 4 is quite simple.

\s
\noindent
{\bf Definition 4}: Let $M$ be a handlebody of genus $p$ with a geometrically
finite hyperbolic structure. Then the universal covering group of $M$, acting
on $\Hh$, is a noded Schottky group of genus $p$.

\s

In the definitions above, every Schottky group is also a noded Schottky group.
A {\em proper} noded Schottky group is one that necessarily contains
parabolic elements.

\subsection{Generators vs. groups}
Throughout this paper, we will deal with
Schottky groups and noded Schottky groups, sometimes marked topologically by
a set of defining loops, including a set of generators, sometimes only marked
by a set of generators. In any case, we will usually regard these generators as
being given up to conjugation in $\PSLC$.
Chuckrow's theorem \cite{chuckrow:schottky} asserts that every set of free
generators for a Schottky group is a set of Schottky generators; that is, if
$a_1,\ldots,a_p$ is any set of free generators for a Schottky group, then
there are defining loops $C_1,\ldots,C'_p$ for these generators as in
definition 1. One expects however that the corresponding statement is not true
for noded Schottky groups (this will be explained below).
Let $\widehat G$ be a noded Schottky group of genus $p$. If we adjoin the
parabolic fixed points of $\widehat G$ to $\Omega(\widehat G)$, with the appropriate
cusped topology (see \cite{Hidalgo:NodedFuchsian} or \cite{K-M:pinched}), we
obtain the set $\Omega^+(\widehat G)$, called the noded set of discontinuity of
$\widehat G$; one then easily sees that $S^+=\Omega^+/\widehat G$ is a noded closed
Riemann surface of genus $p$. It was observed in \cite{Hidalgo:NodedSchottky}
that, given any noded closed Riemann surface $S^{+}$ of genus $p$, there is a
noded Schottky group $\widehat G$ (necessarily of genus $p$) so that
$S^+=\Omega^+/\widehat G$. It follows that every point of the Deligne-Mumford
compactification of the moduli space of genus $p$ can be realized by a noded
Schottky group of genus $p$.
It was
shown in \cite{bm:free} that there are $p$ (smooth) loops on $S^+$, $\widetilde
W_1,\ldots,\widetilde W_p$, which lift to loops on $\Omega^+$, and
which, except that they may pass through the nodes, are simple and disjoint;
at the nodes, they touch but do not cross (that is, if two of these loops pass through a node, then they are tangent to each other at each side of the node; since the two sides of the node are consistently oriented, and the curves are smooth, one can follow them across the node to see if they cross or not).
By using Chuckrow's theorem, one easily sees that the result in \cite{bm:free}
in fact yields the following. Let $\widehat a_1,\ldots,\widehat a_p$ be any set of
free generators of $\widehat G$. Then we can choose the $p$ loops, $\widetilde
W_1,\ldots,\widetilde W_p$, on $S^+$, so that if we lift the complement of
these loops (including the appropriate nodes) to $\Omega^+(\widehat G)$, then
we obtain a fundamental domain for $\widehat G$, bounded by $2p$ not necessarily
simple loops. These $2p$ loops come in pairs, where each pair, say
$\widehat C_i$ and $\widehat C'_i$, are both lifts of the same
$\widetilde W_i$; further, for each $i=1,\ldots,p$, this pairing is explicitly
accomplished by the fact that $\widehat a_i(\widehat C_i)=\widehat C'_i$. As
suggested above, while these loops, $\widehat C_1,\widehat
C'_1,\ldots,\widehat C_p,\widehat C'_p$, are almost disjoint (that is, they
meet without crossing at some number of parabolic fixed points), they need not
be simple --- that is, some of them might pass through the same parabolic
fixed point more than once. It might even be that if we start with a given set
of free generators for a noded Schottky group, then there is no choice of a
set of simple almost disjoint defining loops for these generators.

\subsection{Rigid groups} 
In general, a {\em rigid} Kleinian group is one
that admits no (quasiconformal) deformations. It is well known that a
geometrically finite Kleinian group of the second kind $G$ is rigid if
and only if $\Omega(G)/G$
is a finite union of thrice punctured spheres.
We will need the fact that if $\widehat G$ is a rigid noded Schottky group,
then every component of $\widehat G$ (i.e., connected component of
$\Omega(\widehat G)$) is a Euclidean disc, and, if one factors this Euclidean
disc by its stabilizer, one obtains a thrice punctured sphere. The Fuchsian
group stabilizing this Euclidean disc is a conjugate of the classical
Fuchsian $(\infty,\infty,\infty)$-triangle group.

\subsection{Classical and neoclassical groups} 
If $G$ is a Schottky group, given
by a set of defining loops all of which are Euclidean circles, then $G$ is
{\em classical}, and the corresponding generators form a {\em classical}
generating set. If $\widehat G$ is a noded Schottky group given by a set of defining loops all of which are Euclidean circles, then $\widehat G$ is a {\em neoclassical} Schottky group, and the corresponding set of generators is a {\em neoclassical} set of generators. 

There are Schottky groups for which every set of free generators is classical. For example, every finitely generated purely hyperbolic Fuchsian group representing a closed surface with holes, is, qua Kleinian group, a Schottky group. In the special case that the Fuchsian group is a two generator group representing a torus with one hole, this Schottky group is classical on every set of generators.

On the other hand, regardless of whether $G$ is classical or not, if $a_1,\ldots,a_p$ is a set of free generators for $G$, and no set of defining loops with these generators consists only of Euclidean circles, then $a_1,\ldots,a_p$ forms a {\em non-classical} set of generators. For example, if the Fuchsian group represents a sphere with three holes, then it has infinitely many sets of generators that are classical and infinitely many sets of generators that are non-classical. One expects that in general, classical Schottky groups have many sets of generators that are non-classical.  

For noded Schottky groups, if $\widehat a_1,\ldots, \widehat a_p$ is a set of free generators for the noded Schottky group $\widehat G$, and every set of defining loops for these generators contains at least one loop that is not a Euclidean circle, then $\widehat a_1,\ldots, \widehat a_p$ is a {\em non-neoclassical} set of generators. As with Schottky groups, one expects that in general a noded Schottky group will have many sets of generators that are not neoclassical.
One also expects that in general a neoclassical Schottky group will have many sets of generators that are not neoclassical.

The Schottky group $G$ is {\em classical} if it has at least one set of classical generators; it is non-classical otherwise. Similarly, the noded Schottky group $\widehat G$ is {\em neoclassical} if it has at least one set of neoclassical generators; it is non-neoclassical otherwise.

\section{The boundary of Schottky space} 
There are several natural spaces of
Schottky groups (see \cite{bm:spaces}), and one can regard some of our
constructions here as that of putting the appropriately marked noded Schottky
groups on the boundaries of these spaces. These ideas will be pursued
elsewhere.
We will need the usual space of deformations of a Schottky group of genus $p$,
denoted by $\Salg$, which we will usually think of as a subset of the
representation space of the free group of rank $p$ in $\PSLC$, modulo
conjugation.

\subsection{The infinite shoebox construction}\label{sec:shoebox}
Let $\widehat C_1,\ldots,\widehat C'_p$, with generators $\widehat a_1,\ldots,\widehat a_p$,
be a set of defining loops for a noded Schottky group $\widehat G$, and let $\widehat
p_1,\ldots,\widehat p_q$ be a maximal set of primitive parabolic elements of $\widehat
G$ generating non-conjugate cyclic subgroups. For each $i=1,\ldots,q$, choose
a particular M\"obius transformation $h_i$ conjugating $\widehat p_i$ to the
transformation $P(z)=z+1$. Consider the renormalized group
$h_i\widehat Gh_i\ngn$. For this group, there is a number $\alpha _0>1$ so
that the set $\{|\Im(z)|\ge\alpha _0\}$ is precisely invariant under
$\Stab(\infty)\subset h_i\widehat Gh_i\ngn$.
Regard $\Hh$ as being the set $\{(z,t):z\in{\mathbb C},
t>0\in{\mathbb R}\}$. We likewise identify ${\bbb C}$ with the boundary of $\Hh$, except for the point at infinity; that is, we identify ${\bbb C}$ with $\{(z,t):t=0\}$.

In this normalization, for each parameter $\alpha$,
with $\alpha >\alpha _0$, we define the {\em infinite shoebox} to be the set
$B_{0,\alpha}=\{(z,t):|\Im(z)|\le \alpha ,t\le\alpha \}$. Since $\alpha_0>1$, we easily observe that for every $\alpha>\alpha _0$, the complement of $B_{0,\alpha}$ in $\Hh\cup{\mathbb C}$ is precisely invariant under $\Stab(\infty)\subset h_i\widehat Gh_i\ngn$, where we are now
regarding M\"obius transformations as hyperbolic isometries, which act on the closure of $\Hh$. Then for $\widehat G$, the infinite shoebox with parameter $\alpha$ at $z_i$,
the fixed point of $\widehat p_i$, is $B_{i,\alpha}=h_i\ngn(B_{0,\alpha})$,
and, if $\widehat p$ is any parabolic element of $\widehat G$, conjugate to
some power of $\widehat p_i$, then the corresponding infinite shoebox at $x$,
the fixed point of $\widehat p$, is given by $b(B_{i,\alpha })$, where
$\widehat p=b\widehat p_ib\ngn$.

It was observed in \cite{bm:free} that, for each fixed $\alpha>\alpha _0$,
$\widehat G$ acts as a group of conformal homeomorphisms on the {\em expanded
regular set} $B^{\alpha}=\bigcap \widehat g(B_{i,\alpha})$, where the intersection is
taken over all $\widehat g\in \widehat G$ and all $i=1,\ldots,q$. Further, $\widehat G$
acts as a Schottky group (in the sense of definition 3) on the boundary of
$B^{\alpha }$. Observe that each parabolic $\widehat g\in\widehat G$ appears
to have two fixed points on the boundary of $B^{\alpha}$; that is, $\widehat
g$, as it acts on the boundary of $B^{\alpha}$, appears to be loxodromic.
We define the {\em flat part} of $B^{\alpha }$ to be the intersection of
$B^{\alpha}$ with the extended complex plane. The complement
of the flat part (on the boundary of $B^{\alpha}$) is the disjoint union of
3-sided {\em boxes}, where each box has two {\em vertical sides}
(translates of the sets $\{\Im(z)=\pm \alpha , 0<t<\alpha \}$) and one {\em
horoball side} (a translate of the set $\{|\Im(z)| \leq \alpha , t=\alpha \}$).

We will need a particular class of relatively compact subsets of $B^{\alpha }$.
For each $i=1,...,q$ and for each $n=1,2,...$,
we define $B_{i,\alpha ,n}$ to be $h_{i}^{-1}(B_{0,\alpha } \cap
\{|\Re(z)|\leq n\})$. Then we define
$B^{\alpha,n}=\bigcap \widehat{g}(B_{i,\alpha,n})$, and the {\em truncated flat part} of
$B^{\alpha,n}$ as the intersection of $B^{\alpha ,n}$ with the extended
complex plane. The boundary of the truncated flat part near a parabolic fixed point,
renormalized so as to lie at $\infty$, is a Euclidean rectangle.

We now renormalize $\widehat G$ so that $\infty$ is an interior point of
$\Omega(\widehat G)$.
Then, for each $\alpha>\alpha _0$, there is a conformal map $f^{\alpha}$,
mapping the boundary of $B^{\alpha}$ to $\Ch$, and conjugating $\widehat G$ onto a
Schottky group $G^{\alpha }$, where $f^{\alpha }$ is defined by the
requirement that, near $\infty$, $f^{\alpha}(z)=z+O(|z|\ngn)$. We remark that the group $G^{\alpha}$ depends on the choice of the transformations $h_1,\ldots,h_q$ as well as on the choice of $\alpha $.

It was shown in \cite{bm:free} that, with the above normalization, $f^{\alpha}\to I$, where $I$
denotes the identity, uniformly on compact subsets of $\Omega(\widehat G)$, and
that, for each fixed $\widehat g\in\widehat G$, $f^{\alpha}\widehat
g(f^{\alpha})\ngn\to \widehat g$, as $\alpha \to \infty$. In particular, if we
fix $\alpha _0$, and fix $n$, then $f^{\alpha }\to I$ uniformly on compact subsets of the
truncated flat part of $B^{\alpha_0,n}$.

The boundary of $B^{\alpha _0,n}$ consists of a disjoint union of quadrilaterals
with circular sides. After renormalization, the part of the boundary of
$B^{\alpha _0,n}$ corresponding to $\{|\Im(z)|=\alpha _0\}$ is the {\em
horizontal} part of the boundary, while the part of the boundary corresponding
to $\{|\Re(z)|=n\}$ is the {\em vertical} part of the boundary.

\subsection{Vertical projection}\label{sec:vertical} 
We fix a choice
of conjugating maps, $h_i$, $i=1,\ldots,q$, and we fix a choice of the parameter 
$\alpha>\alpha_0$. 

We need to deform all the $\widehat C_i$ and $\widehat C'_i$, within 
$\Omega^+(\widehat G)$ to an equivalent defining set of loops, with the same 
generators, so that, after appropriate renormalization, each connected 
component of each of the deformed loops appears, in each component of the 
complement of the flat part of $B^{\alpha}$, as a pair of half-infinite 
Euclidean vertical lines, one in $\{\Im(z)\ge \alpha\} $, the other in $\{\Im(z)\le 
-\alpha\} $, both with the same real part. This deformation is accomplished in several steps. We start by deforming the $\widehat C_i$ and $\widehat C'_i$ so that they all meet the lines $\{\Im z=\alpha_0\}$ and $\{\Im z=-\alpha_0\}$ transversely in at most finitely many points, and that these points of intersection are all distinct.

We first observe that we can easily deform all the $\widehat C_i$ and $\widehat C'_i$, so that each of them intersects both $\{\Im (z)>\alpha_0\}$, and $\{\Im (z)<-\alpha_0\}$, in at most one arc having one of its endpoints at the point at infinity. To accomplish this, we remark that we have required that each $C_i$ and each $C'_i$ be simple, and so can pass at most once through $\infty$. Then, if there is another arc of some $C_i$ intersecting for example $\{\Im (z)>\alpha\}$, this arc starts and ends at two points on $\{\Im (z)=\alpha_0\}$, and the distance between these points is less than $1$. There are at most a finite number of other such arcs of the $C_i$ or $C'_i$ lying between these two points of intersection, and there is one whose points of intersection on this line are closest. Since there is some $\epsilon>0$ so that the set $\{\Im (z)>\alpha_0-\epsilon\}$ is precisely invariant under $\Stab(\infty)$, we can easily pull this arc, and its appropriate image, down below the line $\{\Im (z)=\alpha_0\}$ while maintaining all the requisite properties for these loops. After a finite number of iterations of this step, we will have accomplished that each $\widehat C_i$, each each $\widehat C'_i$, intersects transversally each of the lines $\{\Im z=\alpha_0\}$ and $\{\Im z=-\alpha_0\}$ at most once.

We now have some number of points of intersection of the loops $C_i$ and $C'_i$ with the line $\{\Im (z)=\alpha_0\}$; call these $x_1,\ldots,x_m$, where $\Re(x_1)<\cdots<\Re(x_m)$. There are likewise $m$ points of intersection of these loops with the line $\{\Im (z)=-\alpha_0\}$; call these points of intersection $y_1,\ldots,y_m$, where $\Re(y_i)<\cdots<\Re(y_m)$.

Our next observation is that $\Re(x_m)\le\Re(x_1)+1$. The loops $\widehat C_k$ and $\widehat C'_k$ all lie on the boundary of the fundamental domain $\widehat D$. So the arcs of the line $\{\Im (z)=\alpha_0\}$ lying between these points of intersection are alternately in $\widehat D$ and in its complement. Since the transformation $P:z\to z+1$ must map $\widehat D$ into its complement, the region to the left of $x_1$, and the region to the right of $x_m$ both lie in the complement of $\widehat D$. Now assume that $\Re(P(x_1))<\Re(x_m)$. Since $P(\widehat D)$ must be contained in the complement of $\widehat D$, we must have that, for some $k<m$,  $P(x_1)=x_k$. Since the region to the right of $x_1$ lies in $\widehat D$, and the region to the left of $x_m$ lies in $\widehat D$, we cannot have that $P(x_1)=x_{m-1}$.  Hence $k<m-1$, from which it follows that $P(x_2)=x_{k+1}$. Since each $\widehat C_j$ and each $\widehat C'_j$ has at most one point of intersection with the line $\{\Im (z)=\alpha_0\}$, we now have that two of the identifications of the $\widehat C_j$ with the $\widehat C'_j$ are accomplished with either $P$ or $P\ngn$, contradicting the fact that these identifications are free generators of the renormalized $\widehat G$.

The above argument also shows that either $P(y_1)=y_m$, or $\Re(P(y_1))>\Re(y_m)$. Of course, we must have that $P(y_1)=y_m$ if and only if $P(x_1)=x_m$.

In the case that $P(x_1)=x_m$, we define, for $k=1,...,m$, the points
$$x'_k=\frac{k-1}{m-1}+i\alpha, \quad \mbox{and} \quad y'_k=\frac{k-1}{m-1}-i\alpha.$$

In the case that $\Re(P(x_1))>\Re(x_m)$, we define, for $k=1,\ldots,m$, the points
$$x'_k=\frac{k-1}{m}+i\alpha, \quad \mbox{and} \quad y'_k=\frac{k-1}{m}-i\alpha.$$

We next observe that if $\widehat C_j$ intersects the line $\{\Im (z)=\alpha_0\}$ at $x_k$, then it also intersects the line $\{\Im (z)=-\alpha_0\}$ at $y_k$, for no pair of the loops, $\widehat C_j$, $\widehat{C}_{i}$, $\widehat{C}'_i$, $\widehat C'_j$, or any of their translates, cross at $\infty$. 

We are now in a position to choose the deformed $\widehat C_j$ and $\widehat C'_j$ as follows. If $\widehat C_j$ intersects the line $\{\Im (z)=\alpha_0\}$ at say $x_k$, then we deform it so that it is linear from $x_k$ to $x'_k$, then linear, with the same real part, to $\infty$, then linear with the same real part to $y'_k$, then linear to $y_k$. 

The {\em vertical projection} of these deformed loops
consists of replacing each of the above pairs of half-infinite lines (i.e., the lines going from some $x'_k$ to $\infty$, or from some $y'_k$ to $\infty$) with a trio
of line segments, all with the same real part. Two of these line segments keep
 the line $\{\Re(z)=x\}$ fixed, and run up the vertical sides of the shoebox, one at the
point $x+i\alpha $, the other at the point $x-i\alpha $; up to direction, the
lines are then defined as $\{z=x+i\alpha $, $0\le t\le\alpha \}$, and $\{z=x-i\alpha
$, $0\le t\le\alpha\} $. The third line segment runs along the horizontal part of
the boundary and can be described as $\{\Re(z)=x$, $-\alpha \le \Im(z)
\le\alpha $, $t=\alpha\} $.

Observe that the vertical projection of the defining loops
$\widehat C_1,\ldots,\widehat C'_p$, for $\widehat G$ yields a set of defining loops,
$C_1^{\alpha},\ldots,(C'_p)^{\alpha,}$ with corresponding generators
$a_1=f^{\alpha}\widehat a_1(f^{\alpha})\ngn,\ldots,a_p=f^{\alpha}\widehat
a_p(f^{\alpha})\ngn$, for the Schottky group $G^{\alpha }$.
For each fixed choice of the transformations $h_1,\ldots,h_q$,
and each fixed choice of
$\alpha _0$, where $\alpha _0$ is chosen as above, we define the {\em relative conical
neighborhood} of $\widehat G$ to be the set of all marked Schottky groups
$G^{\alpha}=\langle a_1,\ldots,a_p\rangle$.

We will assume the above normalizations from now on. That is, we normalize
$\widehat G$ so that $\infty$ is an interior point of the flat part
corresponding to $\alpha_0$, and we normalize each $f^\alpha $ so that
$f^\alpha (z)=z+O(|z|\ngn)$ near $\infty$. With these normalizations,
$f^\alpha \to I$ uniformly on compact subsets of $\Omega(\widehat G)$, and
$G^\alpha \to \widehat G$ algebraically. We remark that it follows from the
J{\o}rgensen-Marden criterion \cite{J-M:geoconv} that $G^\alpha \to G$
geometrically.
We also remark that it is unclear how large the relative conical neighborhoods are.
However, since each $G^\alpha $ is a Schottky group, and, for $\alpha \to\infty$,
$G^\alpha \to G$, it follows that each relative conical neighborhood contains
infinitely many distinct marked Schottky groups. It is also easy to see, as in
\cite{bm:free}, that, for each primitive parabolic element $\widehat
g\in\widehat G$, as $\alpha \to\infty$, there is a corresponding geodesic on
$S^\alpha =\Omega(G^\alpha )/G^\alpha $ whose length tends to zero. It follows
that each relative conical neighborhood of a noded Schottky group contains Schottky
groups representing infinitely many distinct Riemann surfaces.

\section{Sufficiently Complicated Noded Schottky Groups} 
\subsection{Pinchable sets of geodesics}
Let $\widehat G$ be a noded Schottky group
(of genus $p\ge 2$) defined by the set of defining loops, $\widehat C_1,\ldots,\widehat
C'_p$, with corresponding generators $\widehat a_1,\ldots,\widehat a_p$. This set of
generators defines a representation of the free group on $p$ elements,
${\mathcal F}_p$, into $\PSLC$; the set of elements of the free group mapped
onto parabolic elements of $\widehat G$ are the {\em pinched} elements.
Let $G$ be a Schottky group with defining loops $C_{1},\ldots,C'_{p}$,
and generators $a_{1},\ldots,a_{p}$, where these generators for $G$ lie
in one of the relative conical neighborhoods of the generators $\widehat a_1,\ldots,\widehat a_p$ for $\widehat G$ given in
Sect. \ref{sec:vertical}, and where these defining loops have been obtained from
$\widehat C_1,\ldots,\widehat C'_p$ using vertical projection (see \ref{sec:vertical}).
Let $S=\Omega(G)/G$ be the closed Riemann surface of genus $p$ represented by $G$, and
let $V_i$ be the projection of $C_{i}$, $i=1,\ldots,p$. Then $V_1,\ldots,V_p$
is a set of $p$ homologically independent simple disjoint loops on $S$.
Let $\psi:G\to\widehat G$ be the isomorphism defined by $a_{i}\mapsto \widehat a_i$,
$i=1,\ldots,p$. It is essentially immediate that there are simple disjoint
geodesics $L_1,\ldots,L_q$ on $S$, defined by the words $W_1,\ldots,W_q$ in
the generators $a_{1},\ldots,a_{p}$, so that $\psi(W_1),\ldots,\psi(W_q)$ are
all parabolic in $\widehat G$, and every parabolic element of $\widehat G$ is a power
of a conjugate of one of these.

We remark that the construction in \cite{bm:free} shows that, given the noded Schottky group $\widehat G$, with its set of generators, we can choose the parameter $\alpha$, which determines the Schottky group $G$, with its corresponding set of generators, and we can choose the $L_i$ on $\Omega(G)/G$, so that the $L_i$ are all arbitrarily short.
It was further shown in \cite{bm:parelt}
(see also Yamamoto \cite{yamamoto:parelt})
that if $G$ is any Schottky group, with generators $a_{1},\ldots,a_{p}$, and
$L_1,\ldots,L_q$ is any set of simple disjoint geodesics on $S$, defined by
the words $W_1,\ldots,W_q$, as above, where no $W_i$, as an element of $G$, is a non-trivial power, and, for $i\ne j$, the cyclic subgroups generated by $W_i$ and $W_j$ are not
conjugate in $G$, then there is a noded Schottky group $\widehat G$, and there is
an isomorphism $\psi:G\to \widehat G$, where $\psi(W_1),\ldots,\psi(W_q)$, and
their powers and conjugates, are exactly the parabolic elements of $\widehat G$.
More precisely, it was shown in \cite{bm:parelt} that there is a path in
Schottky space, $\Salg$, which converges to a set of generators for $\widehat G$,
along which the lengths of the geodesics, $L_1,\ldots,L_q$, all tend to zero.

We now fix a Schottky covering $\Omega(G)\to S$ of a closed Riemann surface $S$,
and we fix a set of generators, $a_{1},\ldots,a_{p}$, for $G$.
If $L_1,\ldots,L_q$ is any set of simple disjoint geodesics on $S$
where no $L_i$, as an element of $G$, is a non-trivial power,
and $L_i$ and $L_j$ generate
non-conjugate cyclic subgroups of $G$, for all $i\ne j$, then we say that this
set of geodesics is {\em pinchable}.

\s
\noindent
\begin{Prop}\label{prop:addpinch} Given a non-empty set of $k<3p-3$ pinchable
geodesics, $L_1,\ldots,L_k$ on $S$, there is a set of $k+1$ pinchable
geodesics, $L_1,\ldots,L_k,L_{k+1}$.
\end{Prop}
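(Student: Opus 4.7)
The proof has three ingredients: an Euler-characteristic count producing a component of $S\setminus\bigcup L_i$ that is not a pair of pants, a topological choice of a new simple closed curve there and passage to its geodesic representative, and a verification of the algebraic pinchability conditions via the noded Schottky group obtained from pinching $L_1,\ldots,L_k$.

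\emph{Topological step.} Cutting $S$ along the $k$ geodesics produces components $X_1,\ldots,X_c$, with $X_j$ of genus $g_j$ having $n_j$ boundary circles. From $\chi(S)=2-2p=\sum_j(2-2g_j-n_j)$ and $\sum_j n_j=2k$ one computes
$$\sum_{j=1}^{c}(3g_j+n_j-3)=3p-3-k,$$
which is strictly positive by hypothesis. Hence some $X_j$ satisfies $3g_j+n_j>3$ and is not a thrice-holed sphere, so it contains an essential, non-peripheral simple closed curve $\gamma$. Let $L_{k+1}$ be its geodesic representative on $S$: since disjoint free homotopy classes on a hyperbolic surface have disjoint geodesic representatives, $L_{k+1}$ is simple, disjoint from every $L_i$, and not freely homotopic to any of them.

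\emph{Algebraic step.} Let $W_{k+1}$ be a word in $a_1,\ldots,a_p$ representing $L_{k+1}$. By the result recalled earlier from \cite{bm:parelt}, pinching $L_1,\ldots,L_k$ produces a noded Schottky group $\widehat G$ together with an isomorphism $\psi\colon G \to \widehat G$; set $\widehat g=\psi(W_{k+1})$. Because $\gamma$ is non-peripheral on the hyperbolic component of $S^+$ corresponding to $X_j$, the element $\widehat g$ is loxodromic, and hence $\langle\widehat g\rangle$ cannot be conjugate to any of the parabolic cyclic subgroups $\langle\psi(W_i)\rangle$. For primitivity, note that two components of $\Omega(\widehat G)$ meet only at parabolic fixed points, so the two (non-parabolic) fixed points of $\widehat g$ lie on the boundary of a unique component $\widehat\Delta$. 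Any $\widehat h\in\widehat G$ with $\widehat g=\widehat h^{\,n}$ shares these fixed points and therefore preserves $\widehat\Delta$, i.e.\ lies in $H=\Stab(\widehat\Delta)$. Since $\widehat\Delta$ is simply connected, $H$ is isomorphic to the fundamental group of the corresponding component of $S^+$, in which $\gamma$ represents a primitive element; this forces $n=1$, so $W_{k+1}$ is primitive in $G$.

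\emph{Principal obstacle.} The delicate step is primitivity of $W_{k+1}$ in $G$. In the Schottky setting $\Omega(G)$ is connected, and a simple closed curve on $S=\Omega(G)/G$ need not represent a primitive element of $G$ (already a $(2,1)$-torus curve is primitive on $T^2$ but represents a square in the genus-one cyclic Schottky group). Primitivity is recovered only after passing to $\widehat G$, where the simple-connectivity of each component of $\Omega(\widehat G)$ together with the fact that distinct components meet only at parabolic fixed points forces the stabilizer of the component containing the axis of $\widehat g$ to absorb any would-be root $\widehat h$; establishing this separation property rigorously is where the bulk of the work lies.
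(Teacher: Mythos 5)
Your topological step (the Euler--characteristic count $\sum_j(3g_j+n_j-3)=3p-3-k>0$ locating a complementary piece $X_j$ that is not a three-holed sphere) is correct and runs parallel to the paper's starting point, namely that some component $\Delta$ of $\widehat G$ has $\Delta/\Stab(\Delta)$ not a thrice-punctured sphere. The gap is in the algebraic step, at the sentence ``Because $\gamma$ is non-peripheral on the hyperbolic component of $S^+$ corresponding to $X_j$, the element $\widehat g$ is loxodromic.'' That implication is false. The natural map from the fundamental group of a part of $S^+$ onto $H=\Stab(\Delta)\subset\widehat G$ is injective only when $\Delta$ is simply connected; in general $\Delta$ is a planar domain with nontrivial fundamental group, and an essential, non-peripheral simple closed curve on the part can represent the identity of $\widehat G$ (it lifts to a loop in $\Delta$) or an element conjugate into one of the parabolic subgroups. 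Concretely, take $p=2$ and let $L_1$ be the geodesic representing the generator $a_1$ (crossing the defining loop $V_1$ once and disjoint from $V_2$). Then $X_1=S\setminus L_1$ is a twice-holed torus containing the defining loop $V_2$ of the Schottky covering; $V_2$ is essential and non-peripheral in $X_1$, yet it represents $1\in G$ and so cannot be adjoined to a pinchable set. Thus ``any essential non-peripheral $\gamma$'' does not suffice; the curve must be chosen using the algebraic structure of $H$.

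This is precisely the content of the paper's case analysis: it writes $H$ as a free product, in the sense of the combination theorems, $H_1\ast\cdots\ast H_q\ast F_r$ of quasifuchsian factors and a free Schottky-type factor, and in each configuration exhibits a specific simple loop --- a loxodromic generator $A_1$, or a product of two parabolics such as $P_1P_2$, $P_{1,1}P_{2,1}$, or $P_{1,1}P$ (possibly with an inverse on the second factor) --- that is guaranteed to be loxodromic, primitive, and simple. Your ``principal obstacle'' paragraph flags primitivity as the delicate point, but your primitivity argument also quietly assumes $\widehat\Delta$ simply connected (``$H$ is isomorphic to the fundamental group of the corresponding component of $S^+$''), which is exactly the case that need not occur; and the more basic unproved step is loxodromicity, which you dispose of in one line. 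To repair the proof you would need either to reproduce the paper's decomposition of $\Stab(\Delta)$ and its case-by-case choice of curve, or to give an independent argument that $X_j$ contains a simple closed curve whose image in $G$ is nontrivial, not a proper power, and not conjugate into any $\langle W_i\rangle$.
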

\begin{proof} Since $L_1,\ldots,L_k$ is pinchable, we can find a noded Schottky
group $\widehat G$ for which these are pinched. Since $k<3p-3$, one of the
components of $\widehat G$, call it $\Delta$, is such that $\widehat
S=\Delta/\Stab(\Delta)$ is not a thrice punctured sphere.
Let $H=\Stab(\Delta)\subset\widehat G$. By Ahlfors' finiteness theorem, since $\Delta$ is a
component subgroup of the finitely generated group $\widehat G$, $H$ is finitely
generated. Since $H$ is a subgroup of the free group $\widehat G$, it is also free.
We first take up the case that $\Delta$ is simply connected, in which case 
$\widehat H$ 
is a quasifuchsian group of the first kind. In this case, since $H$ is free, 
we can find generators for $H$, $A_1,B_1,\ldots,A_q,B_q,P_1,\ldots,P_r$, where 
these generators all represent simple loops on $\widehat S$, and where the $A_i$ 
and $B_i$ are loxodromic (including hyperbolic); the $P_i$ are parabolic; 
these generators satisfy the one defining relation: 
$[A_1,B_1]\cdots [A_q,B_q]\cdot P_1\cdots P_r=1$; and every parabolic element 
of $H$ is conjugate to a power of some $P_i$. If $q>0$, then $A_1$ is 
pinchable; if $q=0$, then $r\ge 4$ and $P_1P_2$ is loxodromic and pinchable.
If $\Delta$ is not simply connected, then we can write $H$ as a free product, 
in the sense of combination theorems, $H=H_1\ast\cdots\ast H_q\ast F_r$, 
where each of $H_1,\ldots,H_q$ is a quasifuchsian group, and $F_r$ is a 
Schottky or Schottky-type group. 
We first take up the case that $q=0$; this is the case that $\Delta=\Omega(H)$. In 
this case, $G$ is the free product, in the sense of combination theorems, of $p$ 
infinite cyclic groups. If one of these cyclic groups is loxodromic, then we can 
pinch the corresponding generator. If all $p$ generators are parabolic, call 
them $P_1,\ldots,P_p$, then we can pinch either $P_1P_2$ or $P_1P_2\ngn$.
We next take up the case that $q>0$. 
If some $H_i$ does not represent a thrice punctured sphere, 
then we proceed as above. 
If every $H_i$ represents a thrice punctured sphere and $q>1$, 
then we find geometric generators 
(i.e., parabolic generators as above) 
$P_{1,1}$, $P_{1,2}$ and $P_{1,3}$ for $H_1$, and parabolic generators 
$P_{2,1}$, $P_{2,2}$ and $P_{2,3}$ for $H_2$ as above; that is, 
$P_{i,1}P_{i,2}P_{i,3}=1$, $i=1,2$. 
Then the free group $H$ has rank at least $4$, 
and either $P_{1,1}P_{2,1}$ or $P_{1,1}P_{2,1}\ngn$ is a primitive loxodromic 
element of $H$ which represents a simple loop; hence it is pinchable. 
If $q=1$, and $H_1$ represents a thrice punctured sphere, 
then since $H_1\neq H$, $r\ge 1$. 
If $F_r$ has at least one loxodromic generator, then that generator is 
pinchable. Otherwise, we write the generators of $H_1$ as $P_{1,1}$, $P_{1,2}$ 
and $P_{1,3}$ as above, and let $P$ be one of the parabolic generators of 
$F_r$. Then either $P_{1,1}P$ or $P_{1,1}P\ngn$ represents a simple loop on 
$\widehat S$, and so is pinchable. \end{proof}

\subsection{Valid sets of defining loops} 
Let the Schottky group $G$, uniformizing the Riemann surface $S$ be as above. Let 
$L_1,\ldots,L_q$ be a pinchable set of geodesics on $S$, and let $\widehat S^+$ be the 
noded Riemann surface obtained from $S$ by pinching these $q$ geodesics; 
$\widehat S^+$ consists of a finite number of compact Riemann surfaces, 
called {\em parts}, which are joined together at a 
finite number of nodes. Also, let $\widehat G$ be the noded Schottky group obtained 
from $G$ by pinching these $q$ geodesics; 
let $\widehat\Omega$ be the set of discontinuity of 
$\widehat G$, and let $\widehat\Omega^+$ be the noded set of discontinuity of $\widehat G$. 
Let $V_1,\ldots,V_p$, be a set of defining loops on $S$ and let $\widehat 
V_1,\ldots,\widehat V_p$ be the loops on $\widehat S^+$ obtained by pinching 
$L_1,\ldots,L_q$. We observe that the lifts of the $\widehat V_i$ to 
$\widehat\Omega^+$ are all loops, but they are generally not disjoint, and they need not be simple. There are certainly some number of these lifts passing through each parabolic fixed point, and  some of them 
might pass more than once through the same parabolic fixed point. The set of 
loops, $V_1,\ldots,V_p$, is a {\em valid set of defining loops} for 
$L_1,\ldots,L_q$, if every lift of every $\widehat V_i$ to $\widehat\Omega^+$ is a 
simple loop; that is, it passes at most once through each parabolic fixed point. In this case, we note that the set of loops, $\widehat 
V_1,\ldots,\widehat V_p$, forms a set of defining loops for $\widehat G$ on $\widehat S^+$.

We note that there are exactly $q$ equivalence classes of parabolic fixed points in $\widehat G$, one for each of the loops $L_i$. Assume that $V_1,\ldots,V_p$ is not a valid set of defining loops for $L_1,\ldots L_q$. Then there is a lift of some $\widehat V_i$ that passes through a parabolic fixed point more than once. It follows that the corresponding loop $V_i$ must cross the corresponding geodesic $L_j$ more than once. Further, one sees, by opening up the parabolic fixed point into a hyperbolic transformation and looking at the lift of the corresponding $L_j$ as being a short path between the fixed points, that, in this case, there are necessarily two points of intersection of $V_i$ with $L_j$, and that there is an arc of $L_j$ between these two points, so that, not only does $V_i$ lift to a loop in $\Omega$, but both loops formed by cutting and pasting $V_i$ with this arc of $L_j$ lift to loops in $\Omega$. 

\begin{Prop}There is at least 
one valid set of defining loops $V_1,\ldots,V_p$, for every set of pinchable 
geodesics, $L_1,\ldots,L_q$. \end{Prop}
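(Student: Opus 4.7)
The plan is to start with any defining loop system on $S$ and iteratively apply the cut-and-paste move described in the paragraph immediately preceding the proposition. Begin with any $V_1,\ldots,V_p$ (existence by Koebe's retrosection, Definition 3 above), perturbed so that each $V_i$ meets each $L_j$ transversally, and set $N:=\sum_{i,j}|V_i \cap L_j|$. Argue by induction on $N$: if the set is valid, there is nothing to do; otherwise, the preceding paragraph supplies indices $i,j$, two points $x_1,x_2 \in V_i\cap L_j$, and an arc $\alpha \subset L_j$ joining them (chosen innermost, so $\alpha\cap V_i=\{x_1,x_2\}$), such that both loops $V_i^1 = \beta_1\ast\alpha^{-1}$ and $V_i^2 = \alpha\ast\beta_2$ obtained by cutting $V_i$ at $\{x_1,x_2\}$ and reconnecting via $\alpha$ lift to loops in $\Omega(G)$; here $\beta_1,\beta_2$ are the two arcs of $V_i$ between $x_1$ and $x_2$.

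Each $V_i^k$ is simple by the innermost choice of $\alpha$, and since both lift to loops, both homology classes lie in the Lagrangian $\Lambda=\langle[V_1],\ldots,[V_p]\rangle\subset H_1(S;\bbb Z)$. Writing $[V_i^1]=\sum_k c_k[V_k]$, the relation $[V_i]=[V_i^1]+[V_i^2]$ gives $[V_i^2]=(1-c_i)[V_i]-\sum_{k\ne i}c_k[V_k]$; the $[V_i]$-coefficient of the replacement is $c_i$ for $k=1$ and $1-c_i$ for $k=2$, and these cannot both vanish, so for an appropriate $k$ the system $V_1,\ldots,V_i^k,\ldots,V_p$ remains $\bbb Z$-linearly independent in $H_1(S;\bbb Z)$, hence is again a set of defining loops. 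A small perturbation of $V_i^k$ off $L_j$ drops its intersection count with $L_j$ by the two points $x_1,x_2$ and, since $\alpha\subset L_j$ is disjoint from each other $L_\ell$, adds no intersections with any $L_\ell$ ($\ell\ne j$); so $N$ strictly decreases.

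The principal technical obstacle is restoring pairwise disjointness of the perturbed $V_i^k$ from the other $V_m$: the arc $\alpha$ may cross some $V_m$ at a point of $V_m\cap L_j$, and such a crossing persists under any one-sided perturbation off $L_j$. These residual crossings are to be removed by bigon isotopies of $V_i^k$ supported in a small neighborhood of $\alpha$; such bigons are available because $[V_i^k]\cdot[V_m]=0$ in $H_1(S;\bbb Z)$ (both classes lie in the Lagrangian $\Lambda$), and the isotopies can be chosen to avoid reintroducing intersections with any pinched geodesic $L_\ell$. With this local correction at each step, the complexity $N$ strictly decreases and the induction terminates at a valid set of defining loops.
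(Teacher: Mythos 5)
Your overall strategy --- induction on $N=\sum_{i,j}|V_i\cap L_j|$ via a cut-and-paste along an arc of some $L_j$ --- is the same as the paper's, but the step you yourself flag as ``the principal technical obstacle'' is a genuine gap, and the justification you offer for it does not work. You propose to remove the residual crossings of the new loop $V_i^k$ with the other $V_m$ by bigon isotopies, ``available because $[V_i^k]\cdot[V_m]=0$.'' Vanishing of the algebraic intersection number does not produce bigons: two simple closed curves can have zero algebraic and positive geometric intersection number (a separating curve meeting a nonseparating one essentially is the standard example), in which case no sequence of bigon moves separates them. Even where an innermost bigon does exist, its interior may contain arcs of the pinchable geodesics $L_\ell$ or of other defining loops, so pushing $V_i^k$ across it can increase $N$ or create new crossings among the defining loops; your assertion that the isotopies ``can be chosen to avoid'' this is precisely what needs proof. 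A second, smaller gap: the paragraph preceding the proposition supplies a specific pair $x_1,x_2$ and arc $\alpha$ for which both cut-and-paste loops lift to loops; you additionally assume $\alpha$ can be taken innermost with respect to $V_i$ while retaining that lifting property, which is not justified --- an innermost pair of adjacent points of $V_i\cap L_j$ along $L_j$ need not arise from a single lift of $V_i$ meeting a single lift of $L_j$ twice.

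The paper's proof sidesteps both problems by performing the surgery upstairs in $\Omega(G)$ and, in general, on a different curve. It takes the arc $A$ of a lift $\tilde L_i$ between two crossings with $\tilde V_j$, observes that every lift of every defining loop, being a Jordan curve disjoint from $\tilde V_j$, must cross the compact arc $A$ an even number of times, and therefore finds a pair of crossings of $A$ with some lift $\tilde V_k$ that is innermost among \emph{all} crossings of $A$ with \emph{all} lifts of \emph{all} defining curves. It then cuts $\tilde V_k$ at those two points and reroutes it along parallel push-offs of the intervening sub-arc of $A$. Because the chosen pair is innermost with respect to everything, the one or two resulting loops are automatically simple, disjoint from all other lifts, and --- being constructed as loops in $\Omega(G)$ --- automatically lift to loops; if two loops are produced one keeps all $p+1$ curves and discards a homologically redundant one. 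If you want to salvage your version, the cleanest repair is to adopt this ``innermost among all defining-curve lifts along $A$'' selection at the outset rather than attempting to remove intersections after the fact.
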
 
\begin{proof} 
Suppose $V_1,\ldots,V_p$ is a set of defining loops on $S$ so that the 
corresponding set of defining loops, $\widehat V_1,\ldots,\widehat V_p$, on 
$\widehat S$ is not valid. Then there is a lift, $\tilde L_i$, of some $L_i$, 
and there is a lift $\tilde V_j$ of some $V_j$, so that $\tilde L_i$ and $\tilde 
V_j$ cross more than once. We construct a new loop, call it $\tilde W$ in $\Omega(G)$ as follows. Start at a point of intersection $x$ of $\tilde L_i$ and $\tilde V_j$, and follow the loop $\tilde V_j$ in one of the two possible directions from $x$ until it again intersects $\tilde L_i$ at the point $y$; then follow $\tilde L_i$ back to $x$. Denote by $A$ the arc of $\tilde L_i$ between $x$ and $y$. Observe that if $A$ intersects some lifting of some $V_m$, then, since that lifting of $V_m$ is a loop that is disjoint from $\tilde V_j$, this lifting of $V_m$ must cross $A$ an even number of times. Since $A$ is a compact subset of $\Omega(G)$, it can have only a finite number of points of intersection with liftings of the defining loops. Hence, there is a pair of crossing points of 
$A$ with some lift $\tilde V_k$ of some $V_k$, so that there are no 
other crossing points of $A$ with any lift of any defining curve 
between these two. We now replace $\tilde V_k$ by cutting it at these two 
crossing points, and replacing it by following parallel paths on either side of $A$. After this cut and paste operation, we have replaced $\tilde V_k$ by either one or two loops, depending on whether the two crossings of $\tilde V_k$ with $A$ occur with the same or opposite orientations. 
We then replace $V_k$ by these one or two loops, and observe that we now have either $p$ or $p+1$ simple disjoint loops on $S$ that lift to loops; it is easy to observe that $p$ of these loops must be homologically independent.

If we have replaced $V_k$ by one loop, there is nothing further to be said. If we have replaced it by two loops, then we observe that we now have $p+1$ loops on $S$ defining the Schottky covering.That is, we now have $p+1$ simple disjoint loops, which lift to loops, where every loop that lifts to a loop is freely homotopic to some product of these. It is well known 
that one of these $p+1$ loops is necessarily redundant. In any case, we  have shown that we can find a new 
set of defining loops for this Schottky covering, where the total number of 
crossing points of the defining loops with the set of pinchable loops, 
$L_1,\ldots,L_q$, has been decreased by two. \end{proof}

We remark that the above operation replaces one set of defining loops by 
another, and so might change the generators of the Schottky group.

\subsection{Sufficiently complicated pinchable sets of geodesics} 
For any valid set of defining loops, $V_1,\ldots,V_p$, 
we define the complexity as follows. 
First, we can deform the $V_i$ on $S$ so that they are all geodesics. Then 
the geometric intersection number, $V_i\bullet L_j$, of $V_i$ with $L_j$ is 
well defined; it is the number of points of intersection of these two 
geodesics. Looking on the corresponding noded surface $\widehat S^+$, 
$V_i\bullet L_j$ is the number of times the curve $\widehat V_i$ obtained from $V_i$ by 
contracting $L_j$ to a point, passes through that point (node).
The {\em complexity} of $V_1,\ldots,V_p$ then is 
$$\max\sum_{i=1}^p V_i\bullet L_j,$$ 
where the maximum is taken over all $j=1,\ldots,q$.
The {\em complexity} $\Xi(L_1,\ldots,L_q)$ is the 
minimum of the complexities of $V_1,\ldots,V_p$, 
where the minimum is taken over all valid sets of defining loops.
The crucial point of the complexity is that if $\Xi(L_1,\ldots,L_q)\ge n$, then, 
for every valid defining set $V_1,\ldots,V_p$, there is a node 
$P$ on $S^+$ so that the total number of crossings of $P$ by $\widehat 
V_1,\ldots,\widehat V_p$ is at least $n$.
Now assume that $q=3p-3$, so that $\widehat G$ is a maximal noded Schottky group. 
Observe that $\widehat G$ is rigid, and that every part of $S^+$ is a sphere with three distinct nodes. Also, every connected component 
$\Delta\subset\widehat\Omega$ is a 
Euclidean disc $\Delta$, where $\Delta/\Stab(\Delta)$ is a sphere with three punctures; the three punctures correspond to the three nodes of the corresponding part of $\widehat S^+$. Let $V_1,\ldots,V_p$ be a valid set of defining loops on $S$, 
and let $\widehat V_1,\ldots,\widehat V_p$ be the corresponding loops on 
$\widehat S^+$. For each $i=1,\ldots,p$, 
the intersection of a lifting of $\widehat V_i$ with a 
component of $\widehat G$ (i.e., a connected component of $\widehat\Omega$) is 
called a {\em strand} of that lifting $\widehat V_i$. 
Similarly, the loops $\widehat V_1,\ldots,\widehat V_p$ 
appear on the corresponding parts 
of $\widehat S^+$ as collections of {\em strands} connecting the nodes on the boundary 
of each part. There are two possibilities for these strands; either a strand connects two distinct nodes on some part, or it starts and ends at the same node.

Since the loops $V_1,\ldots,V_p$ are simple and disjoint, 
there are at most three sets of 
{\em parallel} strands of the $\widehat V_i$ in each part; that is, there 
are at most three sets of strands, where any two strands in the same set are homotopic 
arcs with fixed endpoints at the nodes. We regard each of these sets of strands on a single part as being a {\em superstrand}, so that there are at most $3$ superstrands on any one part.

We next look in some component $\Delta$ of $\widehat G$, and look at a parabolic fixed point $x$ on its boundary, where $x$ corresponds to the node $N$ on the part $S_i$ of $\widehat S^+$. In general, there will be infinitely many liftings of superstrands emanating from $x$ in $\Delta$, but, modulo $\Stab(\Delta)$. there are only finitely many. In fact, there are at most $4$ such liftings of superstrands emanating from $x$. If there is exactly one superstrand on $S_i$ with one endpoint at $N$, and the other endpoint at a different node, then modulo $\Stab(\Delta)$ there will be exactly the one lifting of this superstrand emanating from $x$. If there is only one superstrand on $S_i$ with both endpoints at the same node $N$, then this superstrand has two liftings starting at $x$, one in each direction; so, in this case, we see two lifts of superstrands modulo $\Stab(\Delta)$ emanating from $x$. It follows that, modulo $\Stab(\Delta)$, we can have $0$, $1$, $2$, $3$ or $4$ liftings of superstrands starting at $x$. We note that these liftings of superstrands all end at distinct parabolic fixed points on the boundary of $\Delta$.

The defining set of loops, $\widehat V_1,\ldots,\widehat V_p$ is 
{\em sufficiently complicated} if there are two (different) lifts $\widehat 
C_i$ and $\widehat C_j$, of some $\widehat V_i$ and some not necessarily 
distinct $\widehat V_j$, respectively, so that 
$\widehat C_i$ and $\widehat C_j$ both pass through the parabolic fixed point 
$z_1$, into a component $\Delta_1$ of $\widehat G$, then both travel through 
$\Delta_1$ to the same parabolic fixed point on its boundary, $z_2$, and into 
another component $\Delta_2$, which they again traverse together to the same 
boundary point, $z_3$, necessarily a parabolic fixed point, where they enter 
$\Delta_3$, and they leave $\Delta_{3}$ at different parabolic fixed points.

\subsection{Sufficiently complicated noded Schottky groups}
A noded Schottky group $\widehat G$ is {\em sufficiently complicated} if every set of valid 
defining loops on $\widehat S^+$ is sufficiently complicated.
We note that (keeping the notation of last subsection), inside $\Delta_1$, $\widehat C_i$ and $\widehat C_j$ are disjoint; 
they both enter $\Delta_1$ at the same point, and they both leave $\Delta_1$ 
at the same point; hence they cannot both be circles. This idea will be 
explored further in Sect. \ref{sec:genus3}. However, we need the full strength 
of the hypothesis of sufficiently complicated to prove 
that no nearby Schottky group can be classical.

\begin{Prop}\label{prop:11} If a maximal noded Schottky group $\widehat G$ has 
complexity at least $11$, then it is sufficiently complicated. \end{Prop}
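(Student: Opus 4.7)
The plan is to use the complexity hypothesis to find a parabolic fixed point through which at least $11$ strand-lifts of defining loops pass, apply pigeonhole to extract three lifts agreeing on the pair $(z_1,z_2)$, then use a topological analysis at $z_2$ to extract two lifts agreeing on the triple $(z_1,z_2,z_3)$, and finally argue by rigidity that these two must diverge in $\Delta_3$.

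Fix any valid defining set $V_1,\ldots,V_p$. By the hypothesis $\Xi \ge 11$, some node $P$ of $\widehat S^+$ is crossed at least $11$ times by $\widehat V_1,\ldots,\widehat V_p$. Pick a parabolic fixed point $z_1$ above $P$ in the boundary of some component $\Delta_1$. Modulo $\Stab(z_1) \cap \Stab(\Delta_1)$, the strand-lift emanations at $z_1$ into $\Delta_1$ are in bijection with strand-endpoint incidences at the puncture $Q_a$ of $S_a = \Delta_1/\Stab(\Delta_1)$ above $P$, and there are at least $11$ of them. By the analysis recalled in Section 3.3, these distribute among at most four superstrand-emanation positions at $Q_a$ (at most three superstrand classes, with a self-loop class contributing two positions), so pigeonhole produces a position containing at least three parallel strand-lifts. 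Since parallel strands on $S_a$ lift to arcs in $\Delta_1$ with common endpoints, we obtain three distinct lifts of defining loops sharing a common ordered pair $(z_1,z_2)$ with $z_2 \in \partial\Delta_1$.

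Push these three lifts through $z_2$ into the adjacent component $\Delta_2$. Their three continuation-arcs emerge at three positions on the $\Delta_2$-side horocycle at $z_2$ which, by simplicity and disjointness of the defining loops in $\widehat\Omega^+$, are cyclically consecutive on that horocycle; the cusp identification at the node carries this consecutive arrangement to the $\Delta_2$-side. Projecting to the puncture $Q_b$ of $S_b = \Delta_2/\Stab(\Delta_2)$, we get three consecutive emanation positions among the (at most four) possibilities at $Q_b$. A case-by-case topological analysis of arc systems on a thrice-punctured sphere---exploiting the cyclic interleaving of self-loop and non-self-loop emanations at a cusp---shows that any three cyclically consecutive emanation positions at $Q_b$ contain at least two lying in a common superstrand class. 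Hence at least two of the three continuations are parallel on $S_b$, lead to a common $z_3 \in \partial\Delta_2$, and provide two distinct lifts $\widehat C_i, \widehat C_j$ matching on the triple $(z_1,z_2,z_3)$. This case analysis is the main obstacle: three lifts distributed among four slots do not in general pigeonhole to a pair, and the topological constraints on arc systems of thrice-punctured spheres are essential; the value $11$ is calibrated so that the first pigeonhole delivers exactly the three parallel strand-lifts needed to activate this topological constraint.

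Finally, suppose $\widehat C_i$ and $\widehat C_j$ both left $\Delta_3$ at a common point $z_4$. Applying the same argument starting from the triple $(z_2,z_3,z_4)$ in place of $(z_1,z_2,z_3)$ would force them to match at $z_5$, and inductively at every subsequent parabolic fixed point. Since both lifts are closed loops, their sequences of parabolic fixed points would be periodic and identical; the unique $g \in \widehat G$ with $g(\widehat C_i) = \widehat C_j$ would then fix every point in this infinite sequence, and as a M\"obius transformation with infinitely many fixed points it must be the identity, contradicting $\widehat C_i \ne \widehat C_j$. Therefore $\widehat C_i$ and $\widehat C_j$ leave $\Delta_3$ at distinct parabolic fixed points, and the defining set is sufficiently complicated.
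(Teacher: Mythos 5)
Your overall frame (pigeonhole at a node crossed $\ge 11$ times, among at most four emanation directions) matches the paper's setup, but the heart of your argument rests on a claim that is false, and the paper's proof is structured precisely to avoid needing it. You extract three parallel strand-lifts through $(z_1,z_2)$ and then assert, as a ``topological fact about arc systems on a thrice-punctured sphere,'' that three cyclically consecutive continuations into $\Delta_2$ must contain two lying in a common superstrand class. They need not. A block of consecutive, mutually parallel strands entering a thrice-punctured sphere through a cusp can fan out, without crossing, to as many as four distinct parallel classes (the two other cusps plus the two directions of a self-loop class), provided the target directions occur in the compatible cyclic order at the cusp. The paper's own case analysis encodes exactly this: it allows $m_1=4$, i.e.\ a single superstrand on the $S_1$ side whose continuations split into four pairwise non-parallel classes on the $S_2$ side, so that no two of its strands share a $z_3$. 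Your three (or even four) parallel strands can therefore all diverge in $\Delta_2$, and the local argument collapses. A symptom of the problem: if your claimed fact were true, complexity $9$ would already suffice (since $\lceil 9/4\rceil=3$), so the constant $11$ would be doing no work.

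What the paper actually does is a \emph{global} count over all $n\ge 11$ strands at the node. Assuming the configuration is not sufficiently complicated, no two strands of any superstrand $\widetilde A_i$ have parallel continuations, so $|\widetilde A_i|\le m_i$, where $m_i$ is the number of classes into which the continuations of $\widetilde A_i$ split. The key input is the set of mutual constraints among the $m_i$ coming from disjointness and the interleaving of the four blocks at the cusp: if $m_1=4$ then $m_2,m_3,m_4\le 2$; if $m_1=3$ then $m_2\le 3$ and $m_3,m_4\le2$; and always $m_3,m_4\le 2$. Every case then gives $n=\sum|\widetilde A_i|\le\sum m_i\le 10$, contradicting $n\ge 11$. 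These cross-constraints between \emph{different} superstrands are the missing ingredient in your proposal; no analysis of a single group of three parallel strands can substitute for them. (Your closing paragraph, arguing that two lifts parallel through three successive components must eventually diverge, addresses a point the paper leaves implicit, but the element $g$ with $g(\widehat C_i)=\widehat C_j$ exists only when both are lifts of the same $\widehat V_i$, so that argument also needs repair in the case $i\ne j$.)
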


\begin{proof} Let $V_1,\ldots,V_p$ be a valid set of defining loops for $\widehat G$. 
Then, since the complexity $n\ge 11$, there is a node 
$N\subset \widehat S^+$ that is crossed at least $n$ times by $\widehat 
V_1,\ldots,\widehat V_p$.
Let $S_1$ and $S_2$ be the not necessarily distinct spheres with three nodes 
on either side of the node $N$. 
Consider the finite set of $n$ strands in $S_1$, starting at the node $N$, 
defined by $\widehat V_1,\ldots,\widehat V_p$; call these strands 
$A_1,\ldots,A_n$. Ignoring direction, they separate into at most three superstrands (i.e., sets 
of parallel strands), each with a distinct endpoint; however, one of these superstrands
might start and end at the same endpoint, $N$.
Normalize the universal covering of $S_1$ so that 
it is the upper half-plane $\Hw$, 
and so that $N$ is the projection of the point at infinity; assume that the 
stabilizer of $\infty$ is generated by $P(z)=z+1$. Choose some lifting 
$\widetilde A_{1,0}$ of $A_1$, starting at $\infty$, and consider the set $\widetilde A$ of 
all liftings of all the $A_i$ starting at $\infty$, and lying between $\widetilde 
A_{1,0}$ and its image under $z\mapsto z+1$. As observed above, we can choose $\widetilde A_{1,0}$ so that  $\widetilde A$ will divide into 
at most four superstrands, where each of these four superstrands has a distinct endpoint 
on the circle at infinity of $\Hw$. We call these four superstrands, $\widetilde 
A_1,\ldots,\widetilde A_4$; we will give an explicit ordering below.
We write the universal covering of $S_2$ as $\widetilde \Hw$, which we can think of 
as being the set $\Im(z)<\alpha <0$, with $\infty$ projecting to $N$. (Note that even if $S_1=S_2$, following the lifting of a path through the node $N$, we travel from one representation of the universal covering of $S_1$ to another.)  Each of 
the strands in $S_1$ has a unique continuation in $S_2$ so that, when we look 
in $\Hw\cup\widetilde \Hw$, we see a set of $n$ arcs, each starting at some point 
on the real axis, passing through infinity, and ending at some point on the 
boundary of $\widetilde \Hw$; the crucial point is that these arcs touch without 
crossing at $\infty$.
For each $i=1,\ldots,4$, the continuations of the strands of $\widetilde A_i$ separate 
into some number, call it $m_i$, of superstrands in $\widetilde \Hw$. 
As above, $m_i\le 4$. We now order the $\widetilde A_i$ so that 
$m_1\ge\cdots\ge m_4$.
It is easy to observe that if $m_1=4$, then, for $i=2,3,4$, $m_i\le 2$. 
Similarly, if $m_1=3$, then $m_2\le 3$, $m_3\le 2$ and $m_4\le 2$. In any 
case, we have that $m_3\le 2$ and $m_4\le 2$.
Now let $|\widetilde A_i|$ be the number of strands in $\widetilde A_i$; we are given that 
$|\widetilde A_1|+|\widetilde A_2|+|\widetilde A_3|+|\widetilde A_4|=n\ge 11$.
This set of defining loops is sufficiently complicated if 
there are two strands of some $\widetilde A_i$ whose continuations in $\widetilde \Hw$ 
are also parallel. We now assume that this set of defining loops is not 
sufficiently complicated. There are several cases to consider.
We first assume that $m_1=4$; then $m_j \leq 2$, $j=2,3,4$, from which 
it follows that $|\widetilde A_1|=4$, $|\widetilde A_2|\le 2$, 
$|\widetilde A_3|\le 2$ and $|\widetilde A_4|\le 2$, so that 
$|\widetilde A_1|+|\widetilde A_2|+|\widetilde A_3+|\widetilde A_4|\le 10$.
If $m_1=3$, then $m_2\le 3$, $m_3\le 2$ and $m_4\le 2$. 
Then $|\widetilde A_1|=3$, $|\widetilde A_2|\le 3$, $|\widetilde A_3|\le 2$ 
and $|\widetilde A_4|\le 2$, so that $|\widetilde A_1|+|\widetilde 
A_2|+|\widetilde A_3|+|\widetilde A_4|\le 10$.
If $m_1=2$, then $m_2\le 2$, $m_3\le 2$ and $m_4\le 2$, 
from which it follows, as above, that 
$|\widetilde A_1|+|\widetilde A_2|+|\widetilde A_3|+|\widetilde A_4|\le 8$.
Finally, if $m_1=1$, then 
$|\widetilde A_1|+|\widetilde A_2|+|\widetilde A_3|+|\widetilde A_4|\le 4$. 
\end{proof}

\begin{Prop} For each positive integer $n$, and for each fixed genus $p\ge 2$, 
there are only finitely many topologically distinct maximal noded Schottky 
groups of genus $p$ and complexity $n$. \end{Prop}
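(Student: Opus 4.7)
The plan is to show that the topological type of a maximal noded Schottky group of genus $p$ with complexity $n$ is encoded by a finite amount of combinatorial data. First, the noded surface $\widehat S^+=\Omega^+(\widehat G)/\widehat G$ is the union of $2p-2$ thrice-punctured spheres glued in pairs along $3p-3$ nodes, so its topology is captured by a trivalent multigraph $\Gamma$ with $2p-2$ vertices and $3p-3$ edges. There are only finitely many such graphs, so I can fix one and count topological types over $\Gamma$.

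Having fixed $\Gamma$, I would take a valid defining set $V_1,\ldots,V_p$ realizing the complexity, so that each node is crossed at most $n$ times in total by $\widehat V_1,\ldots,\widehat V_p$. On any one part (a thrice-punctured sphere with the three punctures corresponding to the adjacent nodes), the loops $\widehat V_i$ appear as a disjoint union of essential simple arcs with endpoints at the punctures. As already noted in the paper, the simplicity and pairwise disjointness of the $V_i$ force at most three parallel classes (superstrands) per part; each such class falls in one of the finitely many isotopy classes of simple arcs in a thrice-punctured sphere, and the multiplicity of each class is bounded by $n$ since the number of endpoints at any puncture is at most $n$. Hence for each part there are only finitely many labeled strand configurations, and thus finitely many choices globally.

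To reassemble the loops $\widehat V_1,\ldots,\widehat V_p$ from this local data one must specify, at each node, a matching between strand endpoints on its two sides; at most $n$ endpoints meet from each side, so at most $n!$ possible bijections per node, of which only finitely many are admissible (those whose result is a disjoint union of simple loops on $\widehat S^+$). Finally one must select the partition of the resulting loops into the $p$ pairs $(\widehat C_i,\widehat C_i')$ that supply the generators, again a finite combinatorial choice. Combining these finitely many choices produces an explicit finite upper bound, depending only on $p$ and $n$, on the number of topological types.

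The step that will require the most care is showing that two noded Schottky groups yielding the same combinatorial data are in fact topologically equivalent, so that the upper bound really counts equivalence classes rather than over-counting them. Here I would use the fact that the mapping class group of a thrice-punctured sphere acts transitively on any prescribed system of arc-isotopy classes with fixed multiplicities, which lets one build, part by part, a homeomorphism carrying one strand configuration to another; these part-by-part homeomorphisms can then be glued across the nodes in a manner compatible with the prescribed endpoint matchings and the identification of defining pairs, giving a global homeomorphism of $\widehat S^+$ that intertwines the markings and hence realizes a topological equivalence of the corresponding noded Schottky groups.
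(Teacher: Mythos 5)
Your argument is correct, but it encodes the topology differently from the paper. The paper works upstairs, in a fundamental domain for the Schottky covering: it contracts the $2p$ defining loops $C_1,\ldots,C'_p$ to vertices and takes as edges the strands of the pinched curves $L_1,\ldots,L_q$ crossing the domain, obtaining a planar graph with $2p$ vertices and at most $(3p-3)n$ edges; finiteness of such graphs, together with their vertex pairings and edge identifications, finishes the proof in a few lines. You instead work downstairs on the noded surface: you record the trivalent pants graph with $2p-2$ vertices and $3p-3$ edges, decorate each thrice-punctured-sphere part with an arc system of multiplicity at most $n$ drawn from the finitely many arc isotopy classes, and then record the matching data at the nodes. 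The two encodings are roughly dual (the paper collapses the defining loops and keeps the pinched curves as edges; you collapse the pinched curves to nodes and keep the defining loops as arc data), and both reduce the statement to the finiteness of a combinatorial inventory. What your route buys is an explicit treatment of the step the paper dismisses as ``clear'' --- that equal combinatorial data forces topological equivalence --- via the change-of-coordinates principle on thrice-punctured spheres and the gluing of part-by-part homeomorphisms across the nodes. One small correction: on $\widehat S^+$ the defining data consists of the $p$ loops $\widehat V_1,\ldots,\widehat V_p$, which already determine the covering and hence the group; the pairing into $(\widehat C_i,\widehat C_i')$ lives upstairs on the boundary of the fundamental domain, so your final ``partition into pairs'' should be phrased there. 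In either formulation it is a finite choice and does not affect the count.
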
 
\begin{proof} Fix a Schottky group, with defining loops, $C_1,\ldots,C'_p$, with a set of pinchable curves 
on the underlying surface, $S$. Let $V_1,\ldots,V'_p$ be the projections of the defining loops to $S$, and let $L_1,\ldots,L_q$ be a set of pinchable loops, where the number of points of intersection of each $V_i$ with each $L_j$ is the geometric intersection number. We assume that these loops realize the complexity; that is, $\sum_{j=1}^pV_1\bullet L_j=n$, and, for every $i>1$, $\sum_{j=1}^pV_i\bullet L_j\leq n$. 
Consider all lifts of the $V_i$ to the 
fundamental domain defined by the defining curves; and contract the defining 
curves, $C_1,\ldots,C'_p$, to points. This yields a planar graph with $2p$ vertices and $m$ 
edges, where $$m=\sum_i\sum_j V_i\bullet L_j.$$ Since the genus of the underlying surface is $p$, there are at most $3p-3$ pinchable curves, from which it follows that $m\leq (3p-3)n$. It is clear that if two such 
planar graphs, with the same pairing of the vertices, including the 
identifications of the edges at each pair of vertices, are topologically 
equivalent, then the corresponding noded Schottky groups are topologically 
equivalent.
Our result now follows from the obvious fact that there are only finitely many 
topologically distinct planar graphs with $2p$ vertices and $m$ edges. 
\end{proof}

\begin{Prop}\label{prop:infmax} For every genus $p\ge 2$, there are infinitely 
many topologically distinct geometrically finite maximal noded Schottky groups 
of genus $p$.\end{Prop}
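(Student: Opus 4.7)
The plan is to derive this result from the preceding proposition, which shows that for each integer $n$ there are only finitely many topologically distinct maximal noded Schottky groups of genus $p$ with complexity $n$. Consequently, if for every integer $N$ one can exhibit a maximal noded Schottky group of genus $p$ whose complexity exceeds $N$, then infinitely many topologically distinct such groups must exist. The task thus reduces to constructing maximal pinchable sets of arbitrarily large complexity.

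To do this, I would fix a Schottky group $G$ of genus $p$ with defining loops $V_1,\ldots,V_p$ on $S=\Omega(G)/G$, and, by iterating Proposition \ref{prop:addpinch} starting from any single pinchable geodesic, take a maximal pinchable set $L_1,\ldots,L_{3p-3}$ on $S$. Choose a simple closed curve $\gamma$ on $S$ crossing some $V_i$ essentially and some $L_j$ essentially, and such that the Dehn twist $\tau_\gamma$ does not preserve the handlebody structure determined by $G$ (equivalently, $\tau_\gamma$ does not carry any defining set of $G$ to another defining set of $G$). For each positive integer $k$, set $L^{(k)}_j := \tau_\gamma^k(L_j)$; since $\tau_\gamma^k$ is a self-homeomorphism of $S$, the collection $L^{(k)}_1,\ldots,L^{(k)}_{3p-3}$ is again a pants decomposition, hence a maximal pinchable set on the same Schottky covering, giving rise to a maximal noded Schottky group $\widehat G^{(k)}$. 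The Dehn twist formula gives
\[
V_i \bullet \tau_\gamma^k(L_j) \;=\; |k|\,(V_i\bullet\gamma)(\gamma\bullet L_j) + O(1),
\]
so $\max_j \sum_i V_i \bullet L^{(k)}_j \to \infty$ as $k\to\infty$.

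The main obstacle lies in the complexity $\Xi$ being defined as a minimum over \emph{all} valid defining sets, not merely over $V_1,\ldots,V_p$. To show $\Xi(L^{(k)})\to\infty$, one must uniformly rule out any sequence of valid defining sets whose total intersection with $L^{(k)}$ remains bounded. My approach would use intersection-number rigidity of Dehn twists: if $(V^{(k)}_1,\ldots,V^{(k)}_p)$ had uniformly bounded intersection with $L^{(k)}$, then each $V^{(k)}_i$ would be forced, up to bounded discrepancy, to be the image under $\tau_\gamma^k$ of some fixed curve. But such twisted families can yield valid defining sets for the fixed Schottky group $G$ for every $k$ only if $\tau_\gamma$ preserves the handlebody structure of $G$, contradicting our choice of $\gamma$. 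This forces $\Xi(L^{(k)})\to\infty$, and the proposition then follows via the reduction in the first paragraph.
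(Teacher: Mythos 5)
Your reduction in the first paragraph is sound: given the preceding proposition (finitely many topologically distinct maximal noded Schottky groups of each complexity $n$), it suffices to produce maximal pinchable sets of unbounded complexity. But the construction you propose has two genuine gaps. First, the assertion that $L^{(k)}_j=\tau_\gamma^k(L_j)$ is ``again a pants decomposition, hence a maximal pinchable set on the same Schottky covering'' does not follow: pinchability is a property relative to the fixed covering $\Omega(G)\to S$ (each curve must be non-trivial, not a proper power, and the curves pairwise non-conjugate \emph{as elements of $G=\pi_1(S)/N$}), and a self-homeomorphism of $S$ need not preserve the kernel $N$. A pants decomposition one of whose curves lies in $N$ (lifts to a loop) is not pinchable, so you must verify pinchability of the twisted family, not merely that it is a pants decomposition. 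Second, and more seriously, the lower bound $\Xi(L^{(k)})\to\infty$ is the entire content of your argument, and it is only sketched. The complexity is a minimum over \emph{all} valid defining sets; your appeal to ``intersection-number rigidity'' (essentially the inequality $i(\tau_\gamma^k(a),b)\ge |k|\,i(a,\gamma)i(b,\gamma)-i(a,b)$) shows that the \emph{fixed} defining set has large intersection with $L^{(k)}$, but the step from ``bounded intersection with $L^{(k)}$ forces $V^{(k)}_i$ to be, up to bounded discrepancy, $\tau_\gamma^k$ of a fixed curve'' to a contradiction with the choice of $\gamma$ requires controlling the full set of valid defining systems (the disk set of the handlebody) under iterated twisting, which is a substantial piece of surface/handlebody topology that you have not supplied.

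The paper avoids all of this by running the logic in the opposite direction. It exhibits, in genus $2$, the explicit pinchable triples $(b_1,b_2,w_n)$ with $w_n=(b_1b_2)^n(b_2b_1)^{-n}$: these words are cyclically reduced, primitive, represent simple disjoint geodesics, and have lengths $4n$, and since $w_n$ is not a free generator no automorphism of the free group of rank $2$ carries one triple to another; higher genus follows from Proposition \ref{prop:addpinch}. Unbounded complexity is then \emph{deduced} from this proposition together with the finiteness-per-complexity proposition (that is how the paper gets infinitely many sufficiently complicated groups), rather than being proved directly. If you want to salvage your approach, you would need to either prove the complexity lower bound honestly or, more economically, replace the twisting construction by an explicit family of pinchable sets distinguished by an $\mathrm{Aut}(\mathcal{F}_p)$-invariant, which is exactly what the paper does.
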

\begin{proof} Let $S_0$ be a surface of genus $2$, and 
let $A_1,B_1,A_2,B_2$ be a standard homotopy basis on $S_0$. Let $N$ be the 
smallest normal subgroup of $\pi_1(S_0)$ containing $A_1$ and $A_2$. Then we 
can regard the corresponding Schottky group $G_0$ as being generated by the 
elements $b_1$ and $b_2$, corresponding to $B_1$ and $B_2$, respectively. We 
first replace $B_1$ and $B_2$ by their corresponding geodesics. We then 
observe that the geodesic corresponding to the element 
$(B_1B_2)^nB_1A_1B_1\ngn A_1\ngn(B_2B_1)^{-n}$ is simple and disjoint from the 
geodesics corresponding to $B_1$ and $B_2$.
The corresponding word $W_n$ in the free group generated by 
$b_1$ and $b_2$ is then $w_n=(b_1b_2)^n(b_2b_1)^{-n}$. This word is obviously 
cyclically reduced and primitive. Hence the triple, $(b_1,b_2,w_n)$, is 
pinchable. Since the length $|w_n|=4n$, these triples are all distinct. Since 
$b_1$ and $b_2$ are generators, and $w_n$ cannot be a free generator, no 
element of $Aut({\mathcal F}_2)$ can map the triple $(b_1,b_2,w_n)$ onto the 
triple $(b_1,b_2,w_m)$ unless $m=n$. 
For genus $2$, the desired result follows at once; for higher genus, 
the result follows from Proposition \ref{prop:addpinch}. 
\end{proof} 
\begin{Prop} For every genus $p\ge 2$, there are infinitely many sufficiently 
complicated maximal noded Schottky groups of genus $p$. \end{Prop}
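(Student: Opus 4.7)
The plan is to combine the three preceding propositions via a simple pigeonhole argument. By Proposition \ref{prop:infmax}, for each genus $p \ge 2$ there is an infinite family $\{\widehat G^{(k)}\}_{k=1}^\infty$ of pairwise topologically distinct maximal noded Schottky groups of genus $p$. For each $k$, the group $\widehat G^{(k)}$ has some well-defined complexity $n_k = \Xi(L_1^{(k)},\ldots,L_{3p-3}^{(k)})$ computed from its set of pinched geodesics.

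First I would observe that, by the preceding proposition, for every fixed $n$ there are only finitely many topologically distinct maximal noded Schottky groups of genus $p$ with complexity exactly $n$. Consequently, the sequence of complexities $n_k$ cannot take any single value (or any finite set of values) infinitely often; in particular, $n_k \to \infty$ as $k \to \infty$, so there exist infinitely many indices $k$ with $n_k \ge 11$.

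Finally, I would apply Proposition \ref{prop:11}: each $\widehat G^{(k)}$ with $n_k \ge 11$ is sufficiently complicated. Since the $\widehat G^{(k)}$ are pairwise topologically distinct, this furnishes infinitely many topologically distinct sufficiently complicated maximal noded Schottky groups of genus $p$, as required. There is no real obstacle here; all the substantive content sits in the propositions already established, and this final statement is essentially an accounting consequence of them.
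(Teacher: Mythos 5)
Your proposal is correct and is exactly the argument the paper intends: the paper's proof simply states that the result is "an immediate consequence of the above three propositions," and your pigeonhole argument (infinitely many topologically distinct maximal noded groups, only finitely many per complexity value, hence infinitely many with complexity at least $11$, hence sufficiently complicated by Proposition \ref{prop:11}) is the straightforward way to make that precise.
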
 
\begin{proof} This is an immediate consequence of the above three propositions. 
\end{proof} 
\begin{Prop}\label{prop:finmax} There are only finitely many topologically distinct maximal 
neoclassical noded Schottky groups. 
\end{Prop}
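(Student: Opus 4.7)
The plan is to count the tangency points on a neoclassical fundamental domain in two different ways, which will force $p\le 2$, after which finiteness in genus $2$ is an easy combinatorial remark. Let $\widehat G$ be a maximal neoclassical noded Schottky group of genus $p$, let $\widehat C_1,\widehat C_1',\ldots,\widehat C_p,\widehat C_p'$ be a neoclassical defining system consisting of Euclidean circles bounding pairwise disjoint open discs $\widehat D_1,\widehat D_1',\ldots,\widehat D_p,\widehat D_p'$ whose common exterior $\widehat D$ is a fundamental domain for $\widehat G$, and let $T$ denote the number of tangency points on $\partial\widehat D$.

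First I would establish the geometric lemma that at most two of the defining circles meet at any tangency point $x$. If three did, they would share a common tangent line $\ell$ at $x$, so each lies on one of the two sides of $\ell$ near $x$; by pigeonhole some pair $C_a, C_b$ lies on the same side, and then $C_a$ and $C_b$ are internally tangent, with one circle nested inside the other. A short case check on the four assignments of disc-sides to such a nested pair shows that the only one yielding disjoint bounding discs is ``inner disc of the inner circle together with outer disc of the outer one'', and no assignment of disc-side to the third circle is then simultaneously compatible with disjointness against both $C_a$ and $C_b$. Given this lemma, I apply Euler's formula on $\Ch$ to the planar graph whose vertices are the $T$ tangency points, whose edges are the arcs of the circles between consecutive tangencies, and whose faces are the $2p+1$ regions consisting of the $2p$ discs $\widehat D_i,\widehat D_i'$ together with $\widehat D$. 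Since each tangency has exactly two incident circles, the edge count is $E=2T$, and $V-E+F=2$ yields $T-2T+(2p+1)=2$, so $T\le 2p-1$.

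On the other hand, by maximality $\widehat S^+$ consists of thrice-punctured sphere parts joined at exactly $3p-3$ nodes, and each node corresponds to a distinct $\widehat G$-conjugacy class of maximal parabolic cyclic subgroups. Since (by the definition and the results of \cite{bm:combiv}) every parabolic element of $\widehat G$ is conjugate to a power of the stabilizer of a common point of two defining loops, each of these $3p-3$ parabolic orbits has at least one tangency representative on $\partial\widehat D$, giving $T\ge 3p-3$. Combining the two bounds forces $3p-3\le 2p-1$, i.e.\ $p\le 2$, so no maximal neoclassical noded Schottky group exists in genus $\ge 3$, while for $p=2$ there are only finitely many combinatorial arrangements of four circles with three tangencies up to homeomorphism of $\Ch$ respecting the pairing $\widehat C_i\leftrightarrow\widehat C_i'$. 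The main obstacle I foresee is making the geometric case check (for the triple-tangency lemma) fully rigorous; once that is in hand, the Euler count and the node comparison are routine.
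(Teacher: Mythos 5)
Your Euler-characteristic step contains a genuine error: you take the faces of the tangency graph to be the $2p$ discs together with a \emph{single} region $\widehat D$, i.e.\ you assume the common exterior of the $2p$ closed discs is connected. It need not be. A closed chain of tangent circles encloses extra complementary regions, so the correct face count is $F=2p+c$, where $c$ is the number of connected components of $\widehat D$, and (accounting also for the number $k$ of components of the union of the circles) Euler's formula gives $T=2p-1+(c-k)$, which can exceed $2p-1$. This is not a hypothetical: the level-$2$ congruence subgroup $\Gamma(2)$, generated by $z\mapsto z+2$ and $z\mapsto z/(-2z+1)$, is a free, geometrically finite Kleinian group whose quotient is two thrice-punctured spheres joined at $3=3p-3$ nodes; it is a maximal neoclassical noded Schottky group of genus $2$ whose four defining circles ($\Re z=\pm 1$ and $|z\mp\tfrac12|=\tfrac12$) form a necklace with $T=4$ tangency points, violating your bound $T\le 2p-1=3$. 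Consequently the inequality $3p-3\le 2p-1$ is unjustified, the conclusion $p\le 2$ does not follow, and the proof collapses; note also that the paper never claims non-existence in genus $\ge 3$, only finiteness, so you should be suspicious of any argument that proves so much more. (Your triple-tangency lemma and the lower bound $T\ge 3p-3$ are fine.)

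For comparison, the paper's route is entirely combinatorial-algebraic rather than Euler-theoretic: since any two of the $2p$ circles are tangent in at most one point, the word in the generators fixing a tangency point can contain each juxtaposed pair $a_i^{\pm1}a_j^{\pm1}$ at most once, which bounds the lengths of the parabolic words, hence bounds the possible ``pinching data'' to a finite list; the rigidity theorem of \cite{K-M-S:circlepack} then upgrades an isomorphism preserving parabolicity to a conjugation, giving finitely many topological types. If you want to salvage a counting argument, you would need to control $c-k$, i.e.\ the number of closed chains of tangent defining circles, and that is exactly where the interesting examples live.
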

\begin{proof} 
Let $C_1,\ldots,C'_p$ be defining circles for a maximal neoclassical 
noded Schottky group. Each of the $2p$ circles can intersect any of the other 
circles in at most one point, so there is a bound on the number of parabolic 
fixed points on the boundary of the fundamental domain defined by the common 
exterior of these circles. Except for the fact that we cannot tell the 
difference between an element and its inverse, the parabolic element fixing 
each of these points can be uniquely written as a word $W$ in the generators, 
$a_1,\ldots,a_p$ (see \cite{bm:combiv}).
One can describe this word $W$ as follows. Start with a point of 
tangency $x=x_1$ of say $C_i$ and $C_j$, then $a_j$ maps $x_1$ to a point of 
tangency, $x_2=a_j(x_1)$, of $C'_j$ with some $C_k$ or $C'_k$. If it is $C_k$, 
then $a_k(x_2)=a_ka_j(x)$ is a point of tangency of $C'_k$ with some other 
circle; if it is $C'_k$, then $a_k\ngn(x_2)=a_k\ngn a_j(x)$ is a point of 
tangency of $C_k$ with some other circle. Continuing in this manner, we 
eventually return to $x$, and so generate the word $W=\ldots a_ka_j$, or 
$W=\ldots a_k\ngn a_j$, which fixes $x$.
We remarked above, and it was shown in \cite{bm:combiv}, that every 
parabolic element of $G$ is a power of a conjugate of one of the parabolic 
elements whose fixed point lies on the intersection of two of the defining 
loops.
Since any two of the $2p$ circles, $C_1,\ldots,C'_p$ has at most 
one point of tangency, the word $W$ can contain a juxtaposed pair of 
generators $a_ia_j$, or $a_i\ngn a_j$, or $a_ia_j\ngn$, or $a_i\ngn a_j\ngn$, 
at most once. Hence, for any neoclassical noded Schottky group $G$, there is 
a bound on the lengths of these words defining the pinched elements of $G$. It 
follows that there is a finite list, $G_1,\ldots,G_k$, of maximal neoclassical 
groups so that if $G$ is any maximal neoclassical group, then there is a $G_i$ 
in our list, and there is an isomorphism $\phi:G\to G_i$ with the property 
that $g\in G$ is parabolic if and only if $\phi(g) \in G_i$ is parabolic. 
However, it was shown in \cite{K-M-S:circlepack} that every such isomorphism 
is in fact a conjugation by either an element of $\PSLC$, or by an orientation 
reversing conformal homeomorphism of $\Ch$. \end{proof}

\s 
\noindent 
{\bf Remark.} The hypotheses of the above proposition are too strong; 
we do not need to assume that the noded Schottky group is maximal. To see 
this, observe that, since any pair of distinct circles can meet in at most one 
point, there are only finitely many topologically distinct configurations of 
$2p$ circles, all having a common outside. Also, for any two such sets of 
$2p$ circles having the same configuration of points of tangency, it is easy 
to construct a path of topological deformations from one to the other. Hence 
it suffices to choose one such set of $2p$ circles for each such configuration.
Since we require that every point of tangency be a parabolic fixed point, 
it follows that each generator, $\widehat a_i$, maps the points of tangency 
on $\widehat C_i$ to the points of tangency on $\widehat C'_i$. Finally, we 
observe that, for each $i$, and each possible choice of pairings of the 
tangency points on $\widehat C_i$ with the corresponding tangency points on 
$\widehat C'_i$, one of the following situations occurs. Either there are at 
least $3$ points of tangency on $\widehat C_i$, in which case the 
transformation $\widehat a_i$ is determined by the pairings of these points of 
tangency; or there are exactly two points of tangency on $\widehat C_i$, in 
which case, $\widehat a_i$ is determined up to a $1$-parameter continuous 
family of hyperbolic motions keeping these two points fixed and keeping 
$\widehat C_i$ invariant; or there is exactly one point of tangency on 
$\widehat C_i$, in which case $\widehat a_i$ is determined up to a 
$1$-parameter continuous family of parabolic motions keeping this one point 
fixed and keeping $\widehat C_i$ invariant; or there are no points of 
tangency, in which case $\widehat a_i$ is determined up to an arbitrary 
element of the $3$-parameter subgroup of $\PSLC$ keeping both discs bounded by 
$\widehat C_i$ invariant.

\section{Cross ratios and inequalities for sufficiently complicated groups} 
If $z_1$, $z_2$, $z_3$, $z_4$ are four distinct points on the extended 
complex plane, $\Ch$, then we denote their cross-ratio by 
$$(z_1,z_2;z_3,z_4)=\frac{(z_1-z_3)(z_2-z_4)}{(z_1-z_4)(z_2-z_3)}.$$ We will 
use the well known facts that elements of $\PSLC$ preserve the cross-ratio, 
and that the cross-ratio is real if and only if the four points lie on a 
circle. We will usually use the cross-ratio in the normal form $$(0,\infty;z,-i)=iz.$$
We will need the following two easy exercises concerning the level $2$ 
congruence subgroup of the modular group.

\begin{Prop} \label{prop:distance} 
Let $H$ be a Fuchsian group representing the thrice punctured sphere. 
We assume that $H$ acts on the upper half-plane, and that $H$ has been 
conjugated so that $z\mapsto z+\alpha $, $\alpha\ge 1$, is a primitive 
parabolic element in $H$. Let $E_1$ and $E_2$ be two complete geodesics 
starting at $\infty$, where $E_1$ and $E_2$ end at distinct parabolic fixed 
points $y_1$ and $y_2$ of $H$, and where $E_1$ and $E_2$ project onto complete 
simple disjoint geodesics on $\Hw/H$. Then $|y_1-y_2|\ge \frac{|\alpha |}{4}$. 
\end{Prop}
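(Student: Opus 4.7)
The plan is to normalize $H$ explicitly as a scaled copy of the modular group $\Gamma(2)$ and then compute in that concrete model. Since the thrice-punctured sphere carries a unique complete hyperbolic metric, $H$ is conjugate in $\PSLR$ to $\Gamma(2)$, and the normalization that $z\mapsto z+\alpha$ be the primitive parabolic at $\infty$ pins $H$ down (up to a real translation) as $H = R\,\Gamma(2)\,R^{-1}$ with $R(z)=(\alpha/2)z$. Consequently the parabolic fixed points of $H$ form the set $(\alpha/2)\mathbb{Q}\cup\{\infty\}$, partitioned into three cusp classes corresponding to the three cusps of $S=\Hw/H$.

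Next I would enumerate the simple complete geodesics on $S$ that pass through $\pi(\infty)$. It is classical that on a thrice-punctured sphere the complete simple geodesics consist of three arcs (one between each pair of distinct cusps) together with three geodesic loops (one at each cusp, encircling the other two). In particular, exactly three such geodesics pass through $\pi(\infty)$: the two arcs $\gamma_0,\gamma_1$ to the other cusps, and the loop $L_\infty$ at $\pi(\infty)$. Since $\pi(E_1)$ and $\pi(E_2)$ are assumed to be two distinct disjoint simple geodesics, they must be two different members of $\{\gamma_0,\gamma_1,L_\infty\}$.

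The heart of the proof is to identify, for each of these three geodesics, the set of real endpoints of lifts that begin at $\infty$. Writing $T(z)=z+\alpha$ and letting $V=R\,U\,R^{-1}$ be the conjugate via $R$ of the $\Gamma(2)$-generator $U$ fixing $0$ (so $V(\infty)=\alpha/4$), the stabilizer $\Stab_H(\infty)=\langle T\rangle$ yields the $\gamma_0$-endpoints as $\alpha\mathbb{Z}$ and the $\gamma_1$-endpoints as $\tfrac{\alpha}{2}+\alpha\mathbb{Z}$. For $L_\infty$, one combines $T$-translates of the base lift $(\infty\to\alpha/4)$ with a $V^{-1}$-translate that takes $\infty$ to $-\alpha/4$ to obtain the full set $\tfrac{\alpha}{4}+\tfrac{\alpha}{2}\mathbb{Z}$. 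This identification of the $L_\infty$-lifts is the main technical obstacle, since one must verify that no further cosets of $\Stab_H(\infty)$ contribute new endpoints.

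Once these three endpoint sets are in hand, the conclusion is immediate: an elementary inspection shows that the minimum Euclidean distance between a point in one of the three sets and a point in a different set is $\alpha/4$, attained between $\alpha\mathbb{Z}$ and $\tfrac{\alpha}{4}+\tfrac{\alpha}{2}\mathbb{Z}$. Therefore $|y_1-y_2|\ge \alpha/4$, as required.
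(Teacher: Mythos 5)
Your strategy is genuinely different from the paper's, and the facts you assert are all true, but you have left the step you yourself call ``the main technical obstacle'' unproved. Your argument hinges on the claim that the lifts of the cusp-to-itself loop $L_\infty$ that start at $\infty$ end exactly at $\tfrac{\alpha}{4}+\tfrac{\alpha}{2}\mathbb{Z}$; if that set were wrong the final minimum could drop below $\alpha/4$, so this is not a cosmetic omission. To close it you must (a) verify that the geodesic from $\infty$ to $\alpha/4$ really is a lift of the simple loop at the cusp $\pi(\infty)$ --- e.g.\ in the $\Gamma(2)$ model, trace the vertical line $\Re z=\tfrac12$ through the standard ideal quadrilateral $\{|\Re z|\le 1,\ |z\pm\tfrac12|\ge\tfrac12\}$: it exits through the side $|z-\tfrac12|=\tfrac12$ and continues as the line $\Re z=-\tfrac12$, giving an embedded arc from the cusp back to itself separating the other two cusps --- and (b) note that a translate $g(\ell)$ of this lift $\ell$ starts at $\infty$ iff $g(\infty)=\infty$ or $g(\tfrac12)=\infty$, which yields precisely the two arithmetic progressions $\tfrac12+2\mathbb{Z}$ and $-\tfrac12+2\mathbb{Z}$ (this is a complete enumeration, not a coset-by-coset check). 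With (a) and (b) in place your proof is correct; you should also say a word about the degenerate case where $\pi(E_1)=\pi(E_2)$ but $E_1\ne E_2$, which your endpoint sets handle (distinct points of one set are at distance $\ge\alpha/2$) but which your ``two different members'' sentence silently excludes.

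By way of comparison, the paper's proof avoids the entire classification of simple geodesics through the cusp, and in particular avoids identifying $L_\infty$, by normalizing at the other end: since $\pi(E_1)$ and $\pi(E_2)$ are disjoint, at least one of $y_1,y_2$ is not $H$-equivalent to $\infty$; placing that point at $0$, the primitive parabolic there is $z\mapsto z/(\pm\beta z+1)$ with $\alpha\beta=-4$ (the same $\Gamma(2)$ rigidity you use), and its $\pm1$ powers carry $E_1$ to semicircles from $0$ to $\pm\alpha/4$, which any vertical line ending in $(0,\alpha/4)\cup(-\alpha/4,0)$ must cross. That argument uses only disjointness of the two projections and one explicit group element, whereas yours buys a sharper global picture (the exact endpoint sets of all relevant lifts, showing $\alpha/4$ is attained only between a distinct-cusp arc and the loop $L_\infty$) at the cost of the unfinished identification above.
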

\begin{proof} Since $E_1$ and $E_2$ are disjoint geodesics, either $y_1$ 
or $y_2$ is not $H$-equivalent to $\infty$; we can assume it is $y_1$. 
Conjugate $H$ by a translation so that $y_1=0$. Observe that $H$ contains the 
primitive transformations $z\mapsto z+\alpha$ and $z\mapsto\frac{z}{\pm \beta 
z+1}$, where $\beta=\frac{-4}{\alpha }$. Then, since the projection of $E_2$ 
cannot cross the projection of $E_1$, $|y_2|\ge\frac{\alpha }{4}$. \end{proof}

For the next proposition, we consider $\widehat G_0$ as being a maximal 
noded Schottky group, and we consider the truncated flat part of 
$B^{\alpha ,n}$ as defined in Sect. \ref{sec:vertical}. If $\Delta \subset \Omega(\widehat G_{0})$, then, for each $\alpha $ and each $n$,  we call the intersection of the
truncated flat part of $B^{\alpha ,n}$ with $\Delta$ the $(\alpha ,n)$-{\em compact part} of $\Delta$. Up to conjugation, we always may assume that $\Delta$ is the
upper half-plane.

\begin{Prop}\label{prop:slope} 
Let $H$ be a Fuchsian group, acting 
on $\Hw$ and representing the thrice punctured sphere, where $H$ is generated 
by $a(z)=z+\alpha $ and $b(z)=\frac{z}{\beta z+1}$; where $\alpha\ge 1$ and $\alpha \beta=-4$. 
Let $R$ be the Euclidean ray through the origin defined by $\arg(z)=\theta$, 
$0<|\theta|\le \pi/2$. If $|\theta|\le \pi/6$, then $R$ does not project to a 
simple path on $\Hw/H$. Further, given $K>0$ and an $(\alpha 
,n)$-compact part of the upper half-plane, there is a constant $\theta_0$, 
$0<\theta_{0}<\pi/6$, so that, for all $\theta$, with 
$0<|\theta|<\theta_{0}$, there are four points, $z_1$, $z_2$, $z_3$, $z_4$, in 
the $(\alpha ,n)$-compact part of the upper half-plane, so that 
$$|\Im(z_1,z_2;z_3,z_4)|\ge K.$$ 
\end{Prop}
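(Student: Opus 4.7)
The plan is to prove both parts by direct computation with specific Möbius translates of $R$: for Part~1 with the translate $ab(R)$, and for Part~2 with the parabolic translate $a(R)=R+\alpha$.

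For the first assertion, since $1+\alpha\beta=-3$ one has $ab(z)=(-3z+\alpha)/(\beta z+1)$. Setting $ab(re^{i\theta})=ve^{i\theta}$ for $r,v>0$ and separating real and imaginary parts (the latter being nontrivial as $\sin\theta\ne 0$) determines $v$ as a function of $r$; substituting back and using $\alpha\beta=-4$ yields the quadratic
$$3r^2-2\alpha r\cos\theta+\frac{\alpha^2}{4}=0,$$
whose discriminant $\alpha^2(4\cos^2\theta-3)$ is nonnegative exactly when $|\theta|\le\pi/6$. In that range there is then $q=re^{i\theta}\in R$ with $ab(q)\in R$, and since the unique fixed point of $ab$ is $\alpha/2\in\mathbb{R}$, the points $q$ and $ab(q)$ are distinct points of $R\cap\Hw$ in the same $H$-orbit. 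Hence $R$ does not project to a simple path on $\Hw/H$, as claimed.

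For the second assertion, I choose a fixed generic $s>0$ (to be specified) and, for each $\theta\in(0,\theta_0)$, set
$$z_2=se^{i\theta},\quad z_1=(s+\alpha)e^{i\theta},\quad z_3=z_1+\alpha,\quad z_4=z_2+\alpha,$$
so $z_1,z_2\in R$ and $z_3,z_4\in a(R)$. From $z_1-z_3=z_2-z_4=-\alpha$, $z_1-z_4=\alpha(e^{i\theta}-1)$, and $z_2-z_3=-\alpha(e^{i\theta}+1)$ one computes
$$(z_1,z_2;z_3,z_4)=\frac{1}{1-e^{2i\theta}}=\frac{1}{2}+\frac{i}{2}\cot\theta,$$
so $|\Im(z_1,z_2;z_3,z_4)|=\tfrac{1}{2}\cot\theta$, which blows up as $\theta\to 0^+$. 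Taking $\theta_0=\min(\arctan(1/(2K)),\pi/7)$ yields the required bound; the case $\theta<0$ reduces to this one by applying complex conjugation.

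The main obstacle is to verify that $z_1,\ldots,z_4$ really lie in the given $(\alpha,n)$-compact part for every $\theta\in(0,\theta_0)$. Their $y$-coordinates are $O(\sin\theta)$ while their $x$-coordinates remain close to the fixed values $s$, $s+\alpha$, and $s+2\alpha$. The horoballs of $H$ are Euclidean discs tangent to $\mathbb{R}$ at the (dense) parabolic fixed points, but the membership condition $(x-p)^2+(y-\rho_p)^2<\rho_p^2$ of a horoball of radius $\rho_p$ at $p$ reduces, for small $y$, to $\rho_p>(x-p)^2/(2y)$. Choosing $s$ so that $s$, $s+\alpha$, and $s+2\alpha$ all lie in the horizontal window $(-n,n)$ (which requires $n\ge s+2\alpha$, easily arranged since the compact part is nonempty) and are not parabolic fixed points, the quantities $\inf_p(x-p)^2/\rho_p$ (taken over all parabolic fixed points $p$) at each of these three $x$-values are positive. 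Shrinking $\theta_0$ further so that $(s+\alpha)\sin\theta_0$ is smaller than these three infima, every $z_i$ then avoids every horoball and consequently belongs to the $(\alpha,n)$-compact part.
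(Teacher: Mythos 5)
Your proof of the first assertion is correct and is essentially the paper's own argument in unnormalized form: the paper sets $\alpha=2$ and tests when $(ab)^{-1}(R)$ meets $R$, while you test when $ab(R)$ meets $R$ and arrive at the quadratic $3r^2-2\alpha r\cos\theta+\alpha^2/4=0$, whose discriminant gives the $\pi/6$ threshold. (You should add the one-line remark that $v>0$ is automatic because $ab\in\PSLR$ preserves each half-plane and $q=re^{i\theta}$ lies in one of them; together with the positivity of the roots and the fact that the parabolic $ab$ has its unique fixed point at $\alpha/2\in{\mathbb R}$, this does produce two distinct $H$-equivalent points on $R$.)

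The second part has a genuine gap: your four points leave the $(\alpha,n)$-compact part as $\theta\to 0$. All of $z_1,\dots,z_4$ have imaginary part at most $(s+\alpha)\sin\theta$, so your construction forces them down onto $\partial\Hw$, and the membership argument rests on the claim that $\inf_p (x-p)^2/\rho_p>0$ whenever $x$ is not a parabolic fixed point. That claim is false for \emph{every} real $x$: after a dilation $H$ is $\overline{\Gamma(2)}$, its parabolic fixed points are the rationals, and the horoball at $p=a/c$ has radius $\rho_p\asymp c^{-2}$; Dirichlet's theorem gives infinitely many $a/c$ with $|x-a/c|<c^{-2}$, hence $(x-a/c)^2/\rho_{a/c}\lesssim c^{-2}\to 0$, so the infimum is $0$ and no choice of $s$ or further shrinking of $\theta_0$ repairs the step. (Matters are in fact worse: the truncated flat part excludes not only the horoballs but a full neighborhood $\widehat g h_i^{-1}(\{|\Re(w)|>n\})$ of every parabolic fixed point --- that is precisely what makes it a relatively compact, ``truncated'' set --- and these neighborhoods are centered on a dense subset of ${\mathbb R}$.) The difficulty is structural rather than cosmetic: your large imaginary part comes from the near-collision $z_1\to z_4$ at the real point $s+\alpha$, so the configuration necessarily degenerates onto the boundary exactly in the limit you need. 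The paper's route keeps the test points at definite height: as $\theta\to 0$ the ray $R$ and its translate under $ba$ come to straddle arbitrarily large portions of the fixed compact part, and the four points are chosen inside the compact part relative to that sweeping family. Your cross-ratio computation is correct, but to use it one must first relocate the degenerate parallelogram (by an element of $H$, with the base point chosen at height bounded below) into the compact part; as written, the membership verification fails.
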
 

\begin{proof} For the first statement, conjugate $H$ by a dilation, 
so that $\alpha=-\beta=2$. Consider the transformation 
$(ab)\ngn(z)=\frac{z-2}{2z-3}$, and observe that this maps $R$ onto a set disjoint 
from $R$ precisely when $|\tan\theta|>\frac{1}{\sqrt{3}}$. 
The second statement follows almost at once from the observation that, 
as $\theta\to 0$, the region between the line $\arg(z)=\theta$, and its 
translate under $ba$ increases, and covers larger and larger portions of any 
compact part of the upper half-plane. \end{proof}

\begin{Prop}\label{prop:suffcomp} Let $\widehat G$ be a sufficiently 
complicated maximal noded Schottky group. Then, for any set of defining 
loops, $\widehat C_1,\ldots,\widehat C'_p$, there are four successive parabolic 
fixed points, $z_1,z_2,z_3,z_4$ on some $\widehat C_i$, so that 
$|\Im(z_2,z_3;z_4,z_1)|\ge \frac{1}{8}$. \end{Prop}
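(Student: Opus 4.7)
The plan is to extract from the hypothesis of ``sufficient complicatedness'' two distinct lifts $\widehat C_i$ and $\widehat C_j$ of defining loops which share three successive parabolic fixed points $z_1, z_2, z_3$ and diverge in $\Delta_3$ at distinct exit points $z_4$ and $z_4'$ respectively; then to obtain the cross-ratio bound $1/8$ as a consequence of Proposition~\ref{prop:distance} applied inside $\Delta_3$, exploiting that a maximal noded Schottky group is rigid.

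Unpacking the hypothesis in this way makes $z_1, z_2, z_3, z_4$ and $z_1, z_2, z_3, z_4'$ each four successive parabolic fixed points on the corresponding lift. By the elementary inequality $\max(|a|, |b|) \ge \tfrac12|a-b|$, it suffices to show that
\[ \bigl|\Im(z_2, z_3; z_4, z_1) - \Im(z_2, z_3; z_4', z_1)\bigr| \;\ge\; \tfrac14. \]
Next, invoking rigidity, every component $\Delta_k$ is a Euclidean round disc and every $\Stab(\Delta_k)$ is conjugate to the $(\infty,\infty,\infty)$-triangle group. Choose a M\"obius transformation sending $\Delta_2 \mapsto \Hw$, $z_2 \mapsto 0$, $z_3 \mapsto \infty$, and normalize $\Stab(\Delta_2)$ so that its primitive parabolic at $\infty$ is $z \mapsto z+1$; then $(z_2, z_3; z_4, z_1) = z_4/z_1$. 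Because the cyclic parabolic subgroup of $\widehat G$ fixing $\infty$ is shared (up to a primitive power) by $\Stab(\Delta_3)$, and $\Delta_3$ is disjoint from $\Hw$ with $\infty$ on its boundary, $\partial\Delta_3$ must be a horizontal Euclidean line; thus $\Delta_3 = \{\Im z < -h\}$ for some $h > 0$ and $z_4, z_4'$ lie on the line $\Im z = -h$, so $z_4 - z_4'$ is real. The symmetric argument at $z_2 = 0$ forces $\Delta_1$ to be a disc tangent to $\mathbb R$ at $0$ from below of some radius $r > 0$, with $z_1$ on $\partial\Delta_1$.

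Now apply Proposition~\ref{prop:distance} inside $\Delta_3$: the map $z \mapsto -(z+ih)$ conjugates $\Delta_3$ onto $\Hw$ isometrically along the boundary and preserves the primitive parabolic $z \mapsto z+1$ of $\Stab(\Delta_3)$ at $\infty$, whose translation length is at least $1$ since it is a positive power of the primitive parabolic of $\widehat G$ at $\infty$. The arcs $\widehat C_i \cap \Delta_3$ and $\widehat C_j \cap \Delta_3$ become two simple disjoint arcs from $\infty$ to distinct parabolic fixed points, projecting to simple disjoint paths on the thrice-punctured sphere $\Delta_3/\Stab(\Delta_3)$ (this uses that the defining loops are almost disjoint and that $\widehat G$ is maximal noded). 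Proposition~\ref{prop:distance} yields $|z_4-z_4'| \ge 1/4$ in the global coordinates. Combined with the identity $-\Im z_1/|z_1|^2 = 1/(2r)$ valid for any $z_1$ on the circle $|z+ir| = r$, one computes
\[ \bigl|\Im\bigl((z_4 - z_4')/z_1\bigr)\bigr| \;=\; \frac{|z_4 - z_4'|}{2r} \;\ge\; \frac{1}{8r}. \]

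The main obstacle is the last step: bounding $r \le \tfrac12$ so that the above estimate is $\ge \tfrac14$ and closes the proof via the max inequality. This should follow from the analogue of Proposition~\ref{prop:distance} applied inside $\Delta_1$: the primitive parabolic of $\Stab(\Delta_1)$ at $0$ is inherited from $\Stab(\Delta_2)$ as $z \mapsto z/(-4z+1)$, and precise invariance of $\Delta_1$ under $\Stab(\Delta_1) \subset \widehat G$ combined with the Ford-circle structure of the triangle group constrains the tangent radius $r$ to be at most $\tfrac12$. Tracking these constants sharply---so that the $1/4$ from Proposition~\ref{prop:distance}, the $1/(2r)$ identity, and the factor of $1/2$ from passing to the maximum conspire exactly to $1/8$---is the delicate bookkeeping step.
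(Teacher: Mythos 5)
Your proposal follows essentially the same route as the paper: extract from sufficient complicatedness two lifts sharing three successive parabolic fixed points and diverging at $z_4\ne z_4'$, use rigidity to make $\Delta_2,\Delta_3$ parallel half-planes and $\Delta_1$ a round disc tangent at $z_2$, apply Proposition~\ref{prop:distance} in $\Delta_3$ to get $|z_4-z_4'|\ge\tfrac14$, and read off the cross-ratio; your normalization ($\Delta_2=\Hw$, $z\mapsto z+1$ at $\infty$) is just a rotation of the paper's ($z_1=-i$, $\Delta_2$ a tilted half-plane with angle $\theta$), and your identity $|\Im((z_4-z_4')/z_1)|=|z_4-z_4'|/(2r)$ is the paper's $\Im(0,\infty;z,-i)=\Re(z)$ together with $d=\csc\theta$.

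The one step you flag as unresolved, $r\le\tfrac12$, is true, but the mechanism you propose for it is looking in the wrong place. Proposition~\ref{prop:distance} applied \emph{inside} $\Delta_1$, and the primitive parabolic of $\Stab(\Delta_1)$ at $0$, impose no constraint on $r$ whatsoever: every disc tangent to $\bbb R$ at $0$ from below, of arbitrary radius, is invariant under $z\mapsto z/(cz+1)$ for real $c$, so the intrinsic structure of $\Stab(\Delta_1)$ cannot see $r$. The constraint comes from an element that does \emph{not} stabilize $\Delta_1$: the parabolic $g(z)=z+1$ generating $\Stab(\infty)\subset\Stab(\Delta_2)$ (which, as you note, is also the primitive parabolic of $\widehat G$ at $\infty$). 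Since $g$ fixes only $\infty\notin\overline{\Delta_1}$, we have $g(\Delta_1)\ne\Delta_1$, hence $g(\Delta_1)\cap\Delta_1=\emptyset$ because distinct components of $\Omega(\widehat G)$ are disjoint; two radius-$r$ discs tangent to $\bbb R$ from below at $0$ and at $1$ are disjoint only if $2r\le 1$. This is exactly the paper's one-line observation ``$g(\Delta_1)\cap\Delta_1=\emptyset$, hence $|\alpha|\ge d$,'' and with it your bookkeeping closes: $|\Im((z_4-z_4')/z_1)|\ge\tfrac14\cdot\tfrac1{2r}\ge\tfrac14$, and the max inequality gives $\tfrac18$.
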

\begin{proof} Let $\widehat C_1,\ldots,\widehat C'_p$ be any set of defining 
loops for $\widehat G$. Since $\widehat G$ is sufficiently complicated, there is some 
$\widehat C_j$, call it $\widehat C$, and there is some translate $\widehat C'$ 
of some $\widehat C_k$, and there are three distinct components 
$\Delta_1$, $\Delta_2$ and $\Delta_3$ of $\widehat G$, so that, after 
appropriate normalization, $\widehat C$ and $\widehat C'$ both enter 
$\Delta_1$ at the parabolic fixed point $-i=z_1$; these curves both leave 
$\Delta_1$ and enter $\Delta_2$ at the parabolic fixed point $0=z_2$; they 
both leave $\Delta_2$ at the parabolic fixed point, $\infty=z_3$, where they 
enter $\Delta_3$; and they leave $\Delta_3$ at distinct parabolic fixed 
points, $z_4$ and $z'_4$. 
We remark that, while $\widehat C$ and $\widehat C'$ are required to be distinct, 
$j$ is not necessarily distinct from $k$.

Since $\Delta_2$ and $\Delta_3$ are disjoint, and both have $\infty$ on their boundary, their boundaries in ${\mathbb C}$ consist of two parallel lines; the boundary of $\Delta_2$ passes through the origin. It then follows that $\Delta_1$, which is disjoint from both $\Delta_2$ and $\Delta_3$ is a Euclidean disc of finite diameter; let $d$ denote the diameter of $\Delta_1$. 

Let $g(z)= z+\alpha$ be a generator of $\Stab(\infty)\subset\Stab(\Delta_2)$; 
then $g$ is also a generator of $\Stab(\infty)$ in $\Stab(\Delta_3)$. Since 
$g(\Delta_1)\ne\Delta_1$, we must have that 
$g(\Delta_1)\cap\Delta_1=\emptyset$; hence $|\alpha|\ge d$.
We now write $\Delta_2$ as $\{z|-\pi/2+\theta<\arg(z)<\pi/2+\theta\}$; or, 
equivalently, \{$\Re(e^{-i\theta}z)>0\}$. Since $-i$ is not on the boundary of 
$\Delta_2$, $0<\theta< \pi$.
The boundary of $\Delta_3$ is a line parallel to the boundary of $\Delta_2$ 
and passing through the imaginary axis at some point $y<-1$. Since the 
boundary of $\Delta_1$ is a Euclidean circle passing through the point $-i$ 
and tangent to $\arg(z)=\pi/2+\theta$ at the origin, its diameter is 
given by $d=\csc\theta$.
Replacing $g(z)=z+\alpha $ by its inverse if necessary, we can write $\alpha 
=\rho e^{i(\theta-\pi/2)}$, where $|\alpha |=\rho \geq d=\csc\theta$. We observe 
that $\Re(\alpha )=|\alpha |\sin\theta\ge 1$. We conclude from Proposition 
\ref{prop:distance} that $|z_4-z'_4|\ge \frac{|\alpha |}{4}$. It follows that 
$$|\Re(z_4)-\Re(z'_4)|=|z_4-z'_4|\sin\theta\ge\frac{|\alpha| 
\sin\theta}{4}\ge\frac{1}{4}.$$ Our conclusion now 
follows from the fact that $\Im(z_2,z_3;z,z_1)=\Im(0,\infty;z,-i)=\Re(z)$. 
\end{proof}

\section{Non-classical Schottky groups} Let $\widehat C_1,\ldots,\widehat C'_p$, 
with generators, $\widehat a_1,\ldots,\widehat a_p$, be a set of defining loops for 
the sufficiently complicated maximal noded Schottky group $\widehat G$, and let 
$G_n$ be a sequence of Schottky groups converging to $\widehat G$ in a relative conical 
neighborhood (see Sect. \ref{sec:vertical}). That is, there is some number 
$\alpha_0$, so that the relative conical neighborhood consists of all Schottky groups 
$G^{\alpha}$ obtained via vertical projection from 
$\widehat C_1,\ldots,\widehat C'_p$, where the vertical projection is defined 
with reference to the infinite shoebox of size $\alpha$. We reiterate that the 
infinite shoebox, and hence the vertical projection, depend on a choice of 
conjugating transformations, one for each conjugacy class of maximal parabolic 
subgroups of $\widehat G$. 

\begin{Thm} Let $\widehat G$ be a sufficiently complicated maximal noded 
Schottky group. If $\alpha_0$ is sufficiently large, then every Schottky 
group in the relative conical neighborhood of $\widehat G$ defined by $\alpha _0$ is 
not classical. 
\end{Thm}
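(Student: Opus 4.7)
The plan is a proof by contradiction. Assume the theorem fails; then there is a sequence $\alpha_n\to\infty$ and Schottky groups $G^{\alpha_n}$ in the relative conical neighborhood, each admitting some classical defining set of Euclidean circles $\tilde C_1^{(n)},\ldots,(\tilde C'_p)^{(n)}$ with classical generators $\tilde a_1^{(n)},\ldots,\tilde a_p^{(n)}$ (not necessarily the vertical-projection generators built into the conical neighborhood). The strategy is to transfer these circles into a set of defining loops for $\widehat G$ in its noded structure, invoke sufficient complicatedness via Proposition~4.3 to produce four successive parabolic fixed points with imaginary cross-ratio at least $1/8$, and then contradict this via the fact that the four corresponding points on a Euclidean circle must have real cross-ratio.

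For the transfer, use the conformal map $f^{\alpha_n}\colon\partial B^{\alpha_n}\to \Ch$. Pull back each classical circle to a simple loop $\tilde C_i^{\flat,(n)}=(f^{\alpha_n})^{-1}(\tilde C_i^{(n)})$ on the Riemann sphere $\partial B^{\alpha_n}$. Such a loop consists of arcs on the flat part together with finitely many trips through the three-sided boxes sitting over the parabolic fixed points of $\widehat G$. Replacing each box trip by a straight passage through the corresponding parabolic fixed point yields a curve $\widehat C_i^{(n)}$ in $\Ch$, and the collection $\widehat C_1^{(n)},\ldots,(\widehat C'_p)^{(n)}$ forms a set of defining loops for $\widehat G$ in its noded Schottky structure with respect to the transferred generators $b_i^{(n)}=(f^{\alpha_n})^{-1}\tilde a_i^{(n)}f^{\alpha_n}$.

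A short argument is needed to guarantee validity, in the sense of Section~3.2, of this set: that no $\widehat C_i^{(n)}$ passes twice through the same parabolic fixed point. This would require $\tilde C_i^{\flat,(n)}$ to enter some box in two disjoint subarcs, equivalently, the Euclidean circle $\tilde C_i^{(n)}$ to have two subarcs entering and leaving a single component of $\widehat G$ through the same pair of parabolic fixed points, a configuration excluded by the introduction's tangency observation (or handled directly by the cross-ratio estimate below). With validity in hand, sufficient complicatedness yields that the set $\widehat C_i^{(n)}$ is sufficiently complicated, and Proposition~4.3 furnishes four successive parabolic fixed points $z_1^{(n)},z_2^{(n)},z_3^{(n)},z_4^{(n)}$ on some $\widehat C^{(n)}$ with
\[
\bigl|\Im(z_2^{(n)},z_3^{(n)};z_4^{(n)},z_1^{(n)})\bigr|\ge \tfrac{1}{8}.
\]

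For each $k$, $\tilde C^{\flat,(n)}$ crosses a chosen vertical side of the box over $z_k^{(n)}$ at a single point $p_k^{(n)}$; set $q_k^{(n)}=f^{\alpha_n}(p_k^{(n)})\in\tilde C^{(n)}$. The four $q_k^{(n)}$ lie on a Euclidean circle, so their cross-ratio is real. It suffices to show $|q_k^{(n)}-z_k^{(n)}|\to 0$ as $n\to\infty$, uniformly over the $\widehat G$-translate realizing each $z_k^{(n)}$; continuity of the cross-ratio will then give $|\Im(q_2^{(n)},q_3^{(n)};q_4^{(n)},q_1^{(n)})|\ge 1/16$ for large $n$, contradicting reality. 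This uniform estimate combines: the conjugating transformation $h_k$ sending $z_k^{(n)}$ to $\infty$, so that the relevant arcs $\gamma_k^{\pm,(n)}=h_k^{-1}(\{\Im z=\pm\alpha_n\})$ are circles through $z_k^{(n)}$ of Euclidean diameter $O(1/\alpha_n)$; the conjugate $f^{\alpha_n}\widehat p_k(f^{\alpha_n})^{-1}$ being the loxodromic element of $G^{\alpha_n}$ whose pair of fixed points collapses onto $z_k^{(n)}$ by the geometric convergence $G^{\alpha_n}\to\widehat G$; and $f^{\alpha_n}\to I$ uniformly on compact subsets of $\Omega(\widehat G)$, together with the normalization $f^{\alpha_n}(z)=z+O(|z|^{-1})$ near $\infty$. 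The principal obstacle is making these estimates genuinely uniform across the full $\widehat G$-orbit of canonical parabolic fixed points, since Proposition~4.3 may select $z_k^{(n)}$ arbitrarily deep in that orbit.
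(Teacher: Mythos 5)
Your overall strategy coincides with the paper's: transfer a hypothetical set of classical defining circles back to a set of defining loops for $\widehat G$, invoke sufficient complicatedness to produce four successive parabolic fixed points with imaginary cross-ratio at least $1/8$, and contradict the reality of the cross-ratio of four concyclic points. But the step you yourself flag as ``the principal obstacle'' is not a technicality to be deferred --- it is the mathematical heart of the proof, and your proposal contains no mechanism for overcoming it. The difficulty is twofold. First, $f^{\alpha}\to I$ only uniformly on compact subsets, while the quadruple $(z_1,z_2,z_3,z_4)$ produced by Proposition \ref{prop:suffcomp} may sit arbitrarily deep in the orbit of parabolic fixed points, where no control on $f^{\alpha}$ is available. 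Second, and more fundamentally, your intended estimate $|q_k^{(n)}-z_k^{(n)}|\to 0$ cannot close the argument even where it holds: the cross-ratio is M\"obius-invariant, hence scale-invariant, so an absolute error tending to zero proves nothing when the mutual distances $|z_j^{(n)}-z_k^{(n)}|$ of the quadruple also tend to zero, as they do for quadruples deep in the orbit. What is needed is that the perturbation be small \emph{relative to the separation of the four points}, uniformly over all quadruples that can actually occur.

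The paper supplies exactly this missing ingredient through Lemmas \ref{lem:slopeint} and \ref{lem:finitequad}. Lemma \ref{lem:slopeint} (resting on Proposition \ref{prop:slope}: a Euclidean line meeting the upper half-plane and projecting to a simple arc crosses only boundedly many slope lines) shows that the pullback of a Euclidean circle can accumulate only a bounded amount of twist as it passes each node. Lemma \ref{lem:finitequad} converts this into the statement that, modulo the action of $\widehat G$ (which preserves cross-ratios), only \emph{finitely many} quadruples of successive parabolic fixed points can arise on vertical projections of circular defining loops. Once the problem is reduced to a finite list of quadruples, each lying in a fixed compact part, the uniform choice of $\alpha_0$ and the chain of estimates $|\Im(\cdot)|\ge 1/8$, then $\ge 1/16$ on the boundary of the flat part, then $\ge 1/32$ after applying $f^{\alpha}$, go through exactly as you envisage. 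Without this finiteness reduction, or some substitute for it, your argument does not establish the theorem. (Your ``validity'' step is also only sketched, but the paper disposes of the case of a non-simple projected loop by the same cross-ratio device, so that part of your outline is in line with the paper and repairable.)
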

\begin{proof} We fix some $\alpha_0$ so that, using our normalizing 
transformations, we have the proper precise invariance; that is, the 
complement of the flat part $B_0$, defined by $\alpha_0$, is a disjoint union 
of pairs of tangent Euclidean discs, where each pair is precisely invariant under the 
parabolic cyclic subgroup with fixed point at the point of tangency. As above, 
we normalize $\widehat G$ so that $\infty$ is a point in the truncated flat part 
of $B_0$; then there is some component $\Delta_0$ of $\widehat G$ so that 
$\infty\in\Delta_0$. 
Since $\widehat G$ is maximal noded, there are $3p-3$ parabolic elements, 
$\widehat g_1,\ldots,\widehat g_{3p-3}$, that generate non-conjugate maximal 
parabolic cyclic subgroups of $\widehat G$. For each $\alpha >\alpha_0$, set 
$g_i^{\alpha}=f^{\alpha}\widehat g_i(f^{\alpha})\ngn$. 
Fix $\epsilon>0$; we first choose $\alpha_1>\alpha_0$ so that, 
for $\alpha>\alpha_1$, the fixed points of each $g_i^{\alpha}$ lie within 
$\epsilon$ of the fixed point of $\widehat g_i$.
We need the following in order to discuss the twist of the projection of a defining 
curve near a node. For each fixed $i$, 
renormalize $\widehat G$ so that $\widehat g_i(z)=z+1$. For $m \in {\mathbb 
Z}$, we call the lines $\{\Re(z)=m\}$ {\em slope lines}. 
The truncated flat part of $B^{\alpha ,n}$ has non-trivial intersection with 
$2n+1$ slope lines (two of these are on the boundary). The slope lines 
are then well defined near every parabolic fixed point of $\widehat G$.

The content of Lemma \ref{lem:slopeint} below is that, for $\alpha $ 
sufficiently large, if $C$ is a circular defining loop for $G^{\alpha }$, then 
the vertical projection of $C$ has only a finite amount of twist as it passes 
through each node.
Now suppose that $G^{\alpha }$ 
is classical, where $\alpha >\alpha _2$, chosen as above. Let 
$C_1,\ldots,C'_p$ be defining circles for $G^{\alpha }$, and let $\widehat 
C_1,\ldots,\widehat C'_p$ be their vertical projections, defined by $\alpha _0$. 
Also, let $n$ be as given in Lemma \ref{lem:slopeint}.
If $\widehat C_1,\ldots,\widehat C'_p$ is not a set of defining loops for 
$\widehat G$, then there is some $\widehat C_i$ that passes more than once through 
the same parabolic fixed point. In this case, it is clear that we can find 
four points on $\widehat C_i$, and lying in the truncated flat part of $B^{\alpha 
_0,n}$, so that the imaginary part of their cross-ratio is bounded away from 
zero, from which it follows that $C_i$ cannot be a circle.
If $\widehat C_1,\ldots,\widehat C'_p$ is a set of defining loops for $\widehat G$, 
then, since $\widehat G$ is sufficiently complicated, there is some defining curve 
$\widehat C$ so that $\widehat C$ enters some component $\Delta_1$ 
at the parabolic fixed point $x_1$, it leaves $\Delta_1$ and enters 
$\Delta_2$ at the parabolic fixed point $x_2$, it leaves $\Delta_2$ and enters 
$\Delta_3$ at the parabolic fixed point $x_3$, and it leaves $\Delta_3$ at the 
parabolic fixed point $x_4$, where, by Proposition \ref{prop:suffcomp}, 
$|\Im(x_2,x_3;x_4,x_1)|\ge\frac{1}{8}$.
If follows from Lemma \ref{lem:finitequad} below that there are in fact a 
finite number of sets of four parabolic fixed points, 
$(x_{1,1},x_{1,2},x_{1,3},x_{1,4}),\ldots (x_{N,1},x_{N,2},x_{N,3},x_{N,4})$, of 
$\widehat G$, so that the following holds.\hfil\break (i) For every 
$j=1,\ldots,N$, $$|\Im (x_{j,2},x_{j,3},x_{j,4},x_{j,1})|\ge \frac{1}{8},$$ 
and\hfil\break (ii) For every $\alpha >\alpha_2$, and for every classical set of 
defining loops $C_1,\ldots,C'_p$ of any $G^{\alpha}$, there is some $j$, $1\le 
j\le N$, and there is some $C_i$, so that $\widehat C$, the vertical projection 
of this $C_i$, passes through the four points 
$(x_{j,2},x_{j,3},x_{j,4},x_{j,1})$.
We next fix $n$ so that if $C$ is a circular defining loop for some 
$G^{\alpha }$, with $\alpha >\alpha _0$, then the vertical projection of $C$ 
crosses at most $n$ slope lines. Note that the minimal $n$ satisfying this 
requirement decreases as $\alpha _0$ increases; hence we can increase $\alpha 
_0$ while leaving $n$ constant.
We next choose $\alpha _0$ sufficiently large, so that, for each one of our 
finite number of sets of four parabolic fixed points, if $z_i$ is 
a point on the boundary of the flat part of $B^{\alpha _0,n}$, where $z_i$ 
lies on the horizontal part of the boundary component near $x_i$, then 
$|\Im(z_2,z_3;z_4,z_1)|\ge\frac{1}{16}$.
Finally, we choose $\alpha _1>\alpha _0$ so that for all $\alpha >\alpha _1$, 
$f^{\alpha }$ is sufficiently close to the identity so that, for all $z_1$, 
$z_2$, $z_3$, $z_4$ as above, $|\Im(f^{\alpha }(z_2),f^{\alpha 
}(z_3);f^{\alpha }(z_4),f^{\alpha }(z_1)|\ge\frac{1}{32}$. 
\end{proof}

\begin{Lem}\label{lem:slopeint} There is an $n\in{\mathbb Z}$, 
and an $\alpha _2>\alpha _1$, so that for any $\alpha 
>\alpha _2$, and for any defining loop $C$ for $G^{\alpha }$, 
where $C$ is a Euclidean circle, the following holds. 
Let $\widehat g$ be a parabolic element of $\widehat 
G$, where the vertical projection of $C$ passes through the fixed point of 
$\widehat g$. Then, near the fixed point of $\widehat g$, in the truncated flat part 
of $B^{\alpha _0,n}$, the vertical projection of $C$ passes through fewer than 
$n$ slope lines. 
\end{Lem}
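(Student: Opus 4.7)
The plan is to bound the number of slope lines crossed by the vertical projection of a Euclidean defining circle near a parabolic fixed point of $\widehat G$, using the rigidity of Euclidean circles under disjointness constraints.

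\textbf{Setup.} Renormalize so that the fixed point of $\widehat g$ is at $\infty$ with $\widehat g(z)=z+1$; the slope lines are then $\{\Re(z)=m\}$, $m\in\mathbb{Z}$, and the precisely invariant set at $\infty$ (coming from the infinite shoebox of Sect.~\ref{sec:shoebox}) is the complement of the slab $\{|\Im(z)|\le\alpha_0\}$ in $\Ch$, that is, the pair of half-planes $\{\Im(z)>\alpha_0\}$ and $\{\Im(z)<-\alpha_0\}$. Write $g^\alpha=f^\alpha\widehat g(f^\alpha)\ngn\in G^\alpha$; for $\alpha>\alpha_1$ this is loxodromic with both fixed points within $\epsilon$ of $\infty$, and $g^\alpha$ is close to the pure translation $z\mapsto z+1$.

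\textbf{Step 1 (Diameter bound).} Because $C$ is a defining loop for $G^\alpha$, the translates $(g^\alpha)^k(C)$ are pairwise disjoint from $C$ for every $k\neq 0$. Since $g^\alpha$ is close to the unit translation when $\alpha$ is large, disjointness of $C$ and $g^\alpha(C)$ forces the Euclidean horizontal extent of $C$ to be bounded by a constant close to $1$.

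\textbf{Step 2 (Counting strands via circle--line intersections).} Unwinding the vertical projection construction, the number of strands of $\widehat C$ in the flat part near $\infty$ is in bijection with the number of arcs of $(f^\alpha)\ngn(C)$ running across the shoebox at $\infty$. For $\alpha$ large, the uniform convergence $f^\alpha\to I$ on compact subsets of $\Omega(\widehat G)$ allows us to identify these with the arcs of $C$ in a slight perturbation of the slab $\{|\Im(z)|\le\alpha_0\}$ in $\Ch$. A Euclidean circle meets any Euclidean line in at most two points, so $C$ meets each of the two boundary lines of the slab in at most two points, and hence sits in the slab in at most two arcs. So $\widehat C$ has at most two strands near $\infty$.

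\textbf{Step 3 (Conclusion).} Each strand consists, by the vertical projection recipe, of a pair of vertical half-lines both at the same real part, so each strand contributes to at most one slope line. Combined with Step~2, the vertical projection $\widehat C$ passes through at most two slope lines in the truncated flat part of $B^{\alpha_0,n}$. Set $n=3$ (any $n>2$ would do); the monotonicity remark after the lemma, that the minimal working $n$ decreases with $\alpha_0$, is consistent with this bound. Finally, choose $\alpha_2>\alpha_1$ large enough that the quantitative estimates in Steps~1 and~2 (the approximate half-plane shape of the precisely invariant horocyclic neighborhood of the fixed point of $g^\alpha$, and the diameter bound on $C$) hold uniformly for all $\alpha>\alpha_2$.

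\textbf{Main obstacle.} The subtle part is Step~2: one must make rigorous the passage from arcs of the circle $C$ in $\Ch$ to strands of the vertical projection $\widehat C$, tracking the conformal distortion of $(f^\alpha)\ngn$ carefully enough to ensure that, for $\alpha$ large, the precisely invariant region about the fixed point of $g^\alpha$ is close enough to a genuine pair of half-planes that the Euclidean circle-line intersection count of two transfers verbatim to the count of strands. The uniform convergence results recalled in Sect.~\ref{sec:vertical} are the key tool here.
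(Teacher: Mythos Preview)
Your approach is genuinely different from the paper's, and unfortunately it has a real gap.

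The paper argues as follows: on the truncated flat part of $B^{\alpha_0,n}$ (a compact set) one has $f^{\alpha}\to I$ uniformly, so $(f^{\alpha})^{-1}(C)$ is close there to the Euclidean circle $C$; renormalizing so the parabolic fixed point sits at $\infty$, this arc is close to a straight line in the spherical metric. Since $C$ is a defining loop its projection to the thrice-punctured sphere is simple, and Proposition~\ref{prop:slope} then forces the angle of this line to be bounded away from zero (indeed $>\pi/6$). That bound on the angle immediately bounds the number of vertical lines $\{\Re z=m\}$ crossed inside the rectangle $\{|\Re z|\le n,\ |\Im z|\le\alpha_0\}$. You never invoke Proposition~\ref{prop:slope}, and your substitute for it does not work.

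Your Step~1 is the main problem. In the renormalized coordinates the circle $h_i(C)$ is necessarily \emph{large}: for the vertical projection of $C$ to pass through the parabolic fixed point, $(f^{\alpha})^{-1}(C)$ must reach the box at $\infty$, hence $h_i(C)$ must have points with $|\Im z|\ge\alpha\gg1$. So $h_i(C)$ is a line or a circle of large radius. But disjointness from the image under (an approximation of) $z\mapsto z+1$ does \emph{not} bound horizontal extent for such curves: a non-horizontal line and its unit translate are disjoint parallel lines with infinite horizontal extent; and for the genuine loxodromic $g^{\alpha}$ (fixed points near $\infty$), a large circle separating those fixed points and its $g^{\alpha}$-image are nested, hence disjoint, with no bound on diameter. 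Disjointness from a single translate is too weak; one needs the full triangle-group constraint, which is exactly what Proposition~\ref{prop:slope} encodes.

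Your Step~3 also misreads the construction. Inside the truncated flat part of $B^{\alpha_0,n}$ the vertical projection $\widehat C$ coincides with $(f^{\alpha})^{-1}(C)$, an arc of (approximately) the circle $C$; the ``pair of vertical half-lines'' prescription applies only \emph{outside} the flat part, in $|\Im z|>\alpha$. The quantity to bound is therefore the number of vertical lines $\Re z=m$ crossed by this arc, and a nearly horizontal arc would cross many. Ruling that out is precisely the role of Proposition~\ref{prop:slope}.
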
 
\begin{proof} We have already observed that $f^{\alpha }$ converges to the 
identity uniformly on compact subsets of the truncated flat part of 
$B^{\alpha _0,n}$. Then, for $\alpha $ sufficiently large, since $C$ is a 
Euclidean circle, the intersection of $(f^{\alpha })\ngn(C)$ with the 
truncated flat part of $B^{\alpha _0,n}$ is close to a straight line in the 
spherical metric. However, the content of Proposition \ref{prop:slope} is that 
a Euclidean straight line passing through the upper half-plane that projects 
to a simple arc on $\Hw/\Gamma_0$ passes through at most finitely many slope 
lines in the truncated flat part. The desired result now follows. 
\end{proof}

\begin{Lem}\label{lem:finitequad} There is a finite set of quadruples of 
parabolic fixed points, $(x_{1,1},x_{1,2},x_{1,3},x_{1,4}),$ $\ldots, 
(x_{N,1},x_{N,2},x_{N,3},x_{N,4})$, so that if $\widehat C$ is the vertical 
projection of a circular defining curve for some $G^{\alpha}$, where $\widehat 
C$ is a defining loop for $\widehat G$ passing through at least four parabolic fixed 
points, then there is a $g\in G$, and there is a $j$, $1\le j\le N$, so that 
$(x_{j,1},x_{j,2},x_{j,3},x_{j,4})$ are four successive parabolic fixed points 
on $g(\widehat C)$.
\end{Lem}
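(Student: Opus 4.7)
The plan is to combine Lemma~\ref{lem:slopeint}, which bounds the number of slope lines that the vertical projection of a circular defining loop can cross near each node, with the rigidity of the maximal noded Schottky group $\widehat G$, in order to show that only finitely many ``strand types'' can occur through any single component. Iterating this across three successive components then bounds the possible quadruples of consecutive parabolic fixed points up to the $\widehat G$-action.

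First I would reduce to a finite combinatorial setting. Since $\widehat G$ is maximal noded, the quotient $\widehat S^{+}$ has finitely many parts, each a thrice-punctured sphere; hence $\widehat G$ has finitely many orbits of components and of parabolic fixed points. Each component $\Delta$ is a round Euclidean disc, with $\Stab(\Delta)$ conjugate to the classical $(\infty,\infty,\infty)$-triangle group, whose three cusps correspond to the three nodes on the boundary of the part $\Delta/\Stab(\Delta)$. I fix orbit representatives $\Delta^{(1)},\ldots,\Delta^{(k)}$ of components, together with their three parabolic fixed points on $\partial \Delta^{(i)}$.

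Next I would bound the strand types in a single component. For $\alpha>\alpha_{2}$ and any circular defining loop $C$ for $G^{\alpha}$, the vertical projection $\widehat C$ decomposes into \emph{strands}, each a maximal subarc of $\widehat C$ lying in a single component. By Lemma~\ref{lem:slopeint} a strand crosses fewer than $n$ slope lines near each of its two endpoints. Conjugating $\Stab(\Delta^{(i)})$ so that one endpoint cusp sits at $\infty$ with $z\mapsto z+1$ as its primitive parabolic generator, the slope lines near this cusp are $\{\Re z=m\}$, $m\in\mathbb{Z}$, and a simple arc in $\Delta^{(i)}/\Stab(\Delta^{(i)})$ between two prescribed cusps of the thrice-punctured sphere is classified up to homotopy (rel endpoints) by an integer ``twist'' at each end. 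The slope-line bound forces each twist into a finite range, so only finitely many strand types occur in each $\Delta^{(i)}$, and each such type carries a specific ordered pair of endpoint parabolic fixed points on $\partial \Delta^{(i)}$, once we fix representatives modulo the two cyclic parabolic stabilizers.

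Finally I would assemble the quadruples. Given four successive parabolic fixed points $x_{1},x_{2},x_{3},x_{4}$ on $\widehat C$, there are three traversed components $\Delta_{1},\Delta_{2},\Delta_{3}$. Applying a suitable $g\in\widehat G$ I move $\Delta_{1}$ to one of the finitely many representatives $\Delta^{(i)}$. The finitely many strand types through $\Delta_{1}$ yield a finite list of admissible ordered pairs $(x_{1},x_{2})$; at $x_{2}$ there are only finitely many components (modulo $\Stab(x_{2})$) tangent to $\Delta_{1}$, hence finitely many choices for $\Delta_{2}$; finitely many strand types in $\Delta_{2}$ then pin down $x_{3}$ and $\Delta_{3}$; and finitely many strand types in $\Delta_{3}$ pin down $x_{4}$. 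The resulting union is the sought finite list. The principal obstacle is the finiteness-of-strand-types step: one must argue rigorously that the slope-line bound of Lemma~\ref{lem:slopeint} actually restricts the homotopic twist around each endpoint cusp. Proposition~\ref{prop:slope} provides this control for Euclidean straight lines, and Lemma~\ref{lem:slopeint} packages the fact that vertical projections of circular defining loops are close, on the truncated flat part, to such straight lines, so combining these two inputs yields the required bound at each endpoint.
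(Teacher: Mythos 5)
Your proposal follows essentially the same route as the paper: normalize by the group action so that the first node and component lie in a fixed finite set of representatives, then use the bounded-twist content of Lemma~\ref{lem:slopeint} (via Proposition~\ref{prop:slope}) to show that, once $x_1$ and $x_2$ are fixed, only finitely many $\Stab(x_2)$-translates of the continuing strand can occur, and iterate to pin down $x_3$ and then $x_4$. The paper phrases the per-component finiteness in terms of translates of a fundamental polygon for $\Stab(\Delta)$ rather than your ``strand types,'' but the argument is the same.
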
 
\begin{proof}
Choose four successive fixed points $(x_1,x_2,x_3,x_4)$ on $\widehat C$. 
It is clear that we can choose $x_2$ to be the fixed point of one of 
$\widehat g_1,\ldots,\widehat g_{3p-3}$. Then $x_1$ lies in one of the two 
components stabilized by this $\widehat g_i$; call it $\Delta_1$.
As in Proposition \ref{prop:distance}, we note that, modulo the action of 
$\Stab(x_2)$, there are only finitely many parabolic fixed points on the 
boundary of $\Delta_1$, which can be reached by a path that projects to a simple 
path. We choose a four sided fundamental polygon $P_1$ for the action of 
$\Stab(\Delta_1)$ on $\Delta_1$, where $x_2$ lies on the boundary of $P_1$; 
using the action of $\Stab(x_2)$, we can find a translate of $\widehat C$ so 
that $x_1$ also lies on the boundary of $P_1$.
There is a natural continuation of the sides of $P_1$ into $\Delta_2$, and 
there is a natural unique fundamental polygon $P_2$ for $\Stab(\Delta_2)$ with 
these sides. The content of Lemma \ref{lem:slopeint} is that there there are a 
finite number of translates of $P_2$, under $\Stab(x_2)$, so that the endpoint 
of the intersection of this translate of $\widehat C$ with $\Delta_2$ lies on 
the boundary of one of these translates of $P_2$; i.e., given that the translate 
of $\widehat C$ passes through $x_1$ and $x_2$, there are only finitely many 
possibilities for $x_3$.
For each of the possible choices for $x_3$, we repeat the above argument to 
conclude that, since we can have only a finite amount of twist about $x_3$, 
there are only finitely many possibilities for $x_4$.
 \end{proof}

\section{Some noded Schottky groups that are not neoclassical}\label{sec:genus3} 
Consider the set of noded Riemann surfaces of genus 3, where each of these 
surfaces has exactly three nodes, all corresponding to dividing loops (this is 
a   complex 3-dimensional family of noded surfaces). In this section, we show 
that there is no neoclassical Schottky group uniformizing any of these noded 
surfaces. Then, in the next section, we pick a particular such noded surface, 
and add three more nodes, so that the resulting noded surface is highly 
symmetric. We choose a particular noded Schottky group representing this noded 
surface, and show that this noded Schottky group is sufficiently complicated, 
and hence not neo-classical.
We note that it has been conjectured that every closed Riemann surface can be 
uniformized by a classical Schottky group; some partial results along these 
lines are known. The moduli space of Riemann surfaces of genus $p$ has a natural 
compactification, the Deligne-Mumford compactification, obtained by adding noded 
Riemann surfaces. Our example here shows that the corresponding conjecture for 
noded Riemann surfaces does not hold; that is, there are noded Riemann surfaces 
with no neoclassical Schottky uniformizations.
Assume we are given a Schottky group $G$, and we have some set of defining 
loops, $C_1,C'_1, \ldots ,C_p,C'_p$, for $G$. Let $\mathcal D$ be the standard 
fundamental domain for $G$ bounded by these loops and denote by $P:\Omega(G) \to 
S=\Omega(G)/G$ the natural holomorphic covering. Assume that $R \subset S$ is a 
simple closed curve whose lift to ${\mathcal D}$ consists of pairwise disjoint 
simple arcs. Each connected component of $P^{-1}(R) \cap {\mathcal D}$ is called 
a {\it strand} of $R$. A strand $\tilde R$ of $R$ which has both its end points 
on the same defining loop on the boundary of $\mathcal D$ is called {\em 
essential} if it is not homotopically trivial in $\mathcal D$ relative to its 
boundary.
Consider a closed Riemann surface $S$ of genus $3$ with three 
homologically trivial, but homotopically distinct and non-trivial, mutually 
disjoint simple loops, which we think of as {\em red loops}; we label these red 
loops as $R_{1}$, $R_{2}$ and $R_{3}$. These are chosen so that 
$S-\{R_{1},R_{2},R_{3}\}$ consists of three one-holed tori, denoted by $P_{1}$, 
$P_{2}$ and $P_{3}$, and one 3-holed sphere, denoted by $P_{4}$; we assume 
that $R_{i}$ lies on the boundary of $P_{i}$, $i=1,2,3$ (see figure \ref{figure1}).

\begin{figure}
\centering
\includegraphics[width=7cm]{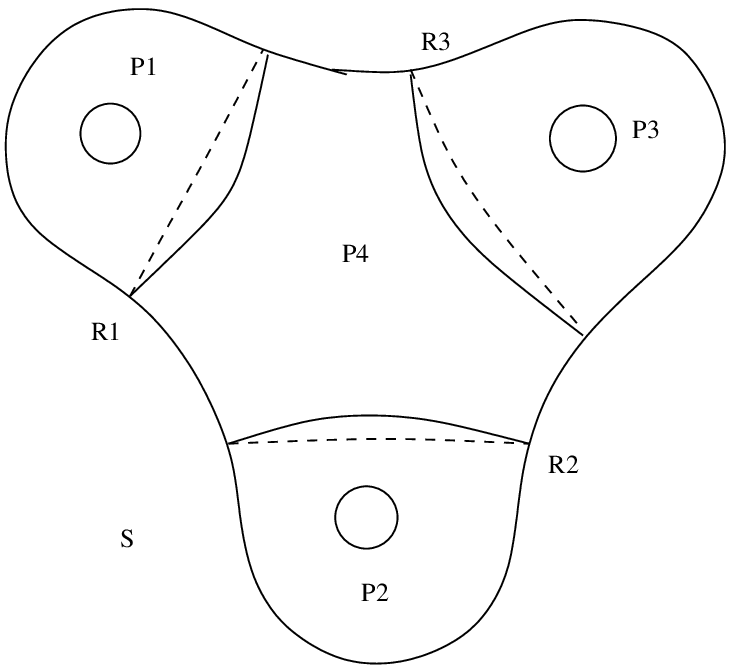}
\caption{}
\label{figure1}
\end{figure}

\subsection{Generators} Before proceeding, we give an explicit example showing 
that the noded surface $\widehat S$, obtained by shrinking all three red loops, 
can be uniformized by a noded Schottky group. We remark that it was shown by 
Hidalgo \cite{Hidalgo:NodedSchottky} that every noded Riemann surface can be 
represented by at least one noded Schottky group.
Let $A_1,B_1,A_2,B_2,A_3,B_3$ be a standard homology basis on some surface $S$ 
of genus $3$ as shown in figure \ref{figure4}. In the same figure are shown the oriented simple loops $R_{1}$, $R_{2}$, $R_{3}$, $W_{1}$, $W_{2}$, $W_{3}$. We have that $R_{j}$ is homotopically equivalent to $[A_{j},B_{j}]=A_{j}B_{j}A_{j}^{-1}B_{j}^{-1}$, for $j=1,2,3$. We also have that the loops $W_{1}$, $W_{2}$ and $W_{3}$ are respectively freely homotopic to $B_{3}^{-1}B_{1}^{-1}B_{2}^{-1}$,  $A_{2}A_{1}^{-1}$  and $A_{1}A_{3}^{-1}$. The three loops $W_{1}$, $W_{2}$ and $W_{3}$ are pairwise disjoint and homologically independent, in particular, they determine a Schottky covering (with corresponding Schottky group $G$)
$P:\Omega(G) \to S$, defined by the smallest 
normal subgroup $N\subset \pi_1(S)$ containing the elements, $w_1,w_2,w_3$, 
corresponding to the loops $W_1, W_2, W_3$.  Let us denote by $d_{1}, d_{2}, d_{3}$ generators of $G$ so that there is a fundamental set of loops for $G$, say $C_{1}$, $C'_{1}$, $C_{2}$, $C'_{2}$, $C_{3}$, $C'_{3}$, where $P(C_{j})=W_{j}$ and $d_{j}(C_{j})=C'_{j}$, for $j=1,2,3$. Projecting the elements corresponding to $R_1, R_2, R_3$ to $G=\pi_1(S)/N$, 
we obtain the corresponding elements $r_1=d_{1}d_{2}d_{3}^{-1}d_{1}^{-1}d_3d_2{-1}$, 
$r_2=d_{1}d_{2}d_{1}^{-1}d_{2}^{-1}$ and $r_3=d_{1}d_{3}^{-1}d_{1}^{-1}d_{3}$. These three words in 
the free group $G$ are obviously not powers and they obviously generate 
non-conjugate cyclic subgroups. Hence they are pinchable.\footnote{This 
particular example is closely related to some families of examples found by 
James Blumling and Vinitha Jacob, two high school students who worked with the 
second author.} We have shown that $\widehat S$ (the noded surface obtained from $S$ after pinching the loops $R_{1}$, $R_{2}$ and $R_{3}$) can be represented by a noded 
Schottky group.

\begin{figure}
\centering
\includegraphics[width=8cm]{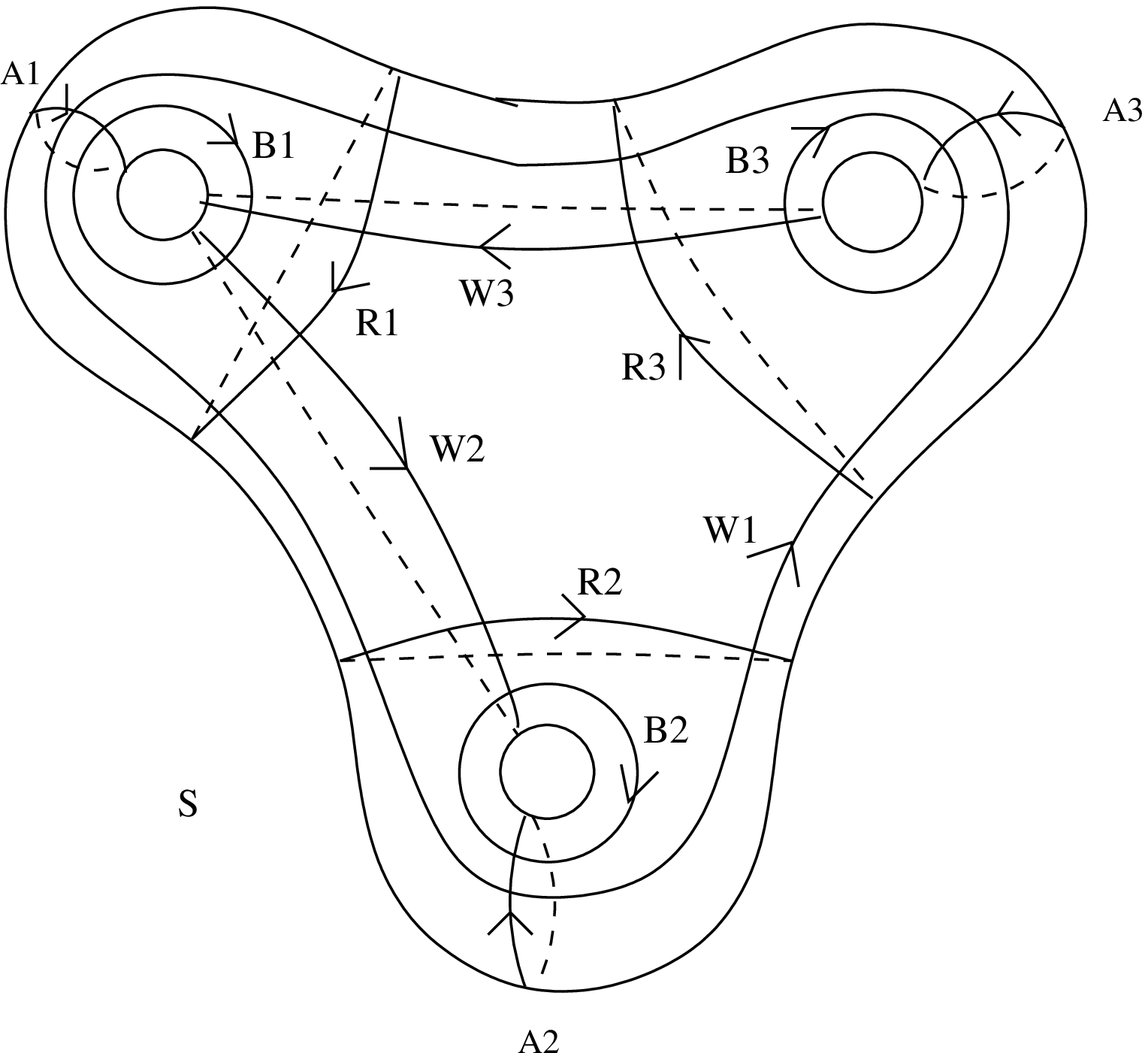}
\caption{}
\label{figure4}
\end{figure}

\begin{Thm} Let $\widehat S$ be some noded Riemann surface of genus $3$ formed 
by pinching the three loops, $R_1$, $R_2$ and $R_3$. Let $\widehat G$ be any 
noded Schottky group so that $\Omega^+(\widehat G)/\widehat G=\widehat S$. Then 
$\widehat G$ is not neoclassical. 
\end{Thm}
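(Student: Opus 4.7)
The plan is to derive a contradiction by locating two distinct translates of defining circles inside a single component of $\Omega^+(\widehat G)$ that share two parabolic fixed points. This is impossible for Euclidean circles: two circles tangent (with their discs on opposite sides, as translates of defining circles are at parabolic fixed points) at two distinct points must coincide, since the tangent line at one tangency together with the condition of passing through a second point uniquely determines such a circle.

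I work in a component $\Delta$ of $\Omega^+(\widehat G)$ covering the central sphere $P_4$. Since $P_4$ is a thrice-punctured sphere (its three nodes being the three punctures), its stabilizer in $\widehat G$ is a rigid Fuchsian group (a conjugate of the $(\infty,\infty,\infty)$-triangle group), so $\Delta$ is a round Euclidean disc with three $H_4$-orbits of parabolic fixed points on $\partial\Delta$, one for each node.

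The key topological input is that every defining loop $V_k$ on the smooth surface $S$ crosses the union $R_1 \cup R_2 \cup R_3$ at least four times. Because each $R_j$ is separating, $V_k \bullet R_j$ is even, and the "small-crossing" possibilities are: (a) $V_k$ has no crossings, hence lies entirely in a single part and is freely homotopic to a simple loop on some $P_j$ (whence killing $V_k$ abelianizes $\pi_1(P_j)$ and forces $R_j = [\alpha_j,\beta_j]$ to be trivial in $\widehat G$) or on $P_4$ (whence $V_k$ is boundary-parallel, i.e., freely homotopic to some $R_j$, again trivializing $R_j$); or (b) $V_k$ has two crossings, necessarily at a single node $n_j$, so its free-homotopy class lies in $\pi_1(P_j \cup P_4)$, and setting it to the identity either trivializes $R_j$ (lollipop entirely on $P_j$) or identifies a handle generator of $\pi_1(P_j)$ with a parabolic coming from a different node, producing a stabilizer $H_j$ whose quotient is not a once-punctured torus $P_j$. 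Each of these alternatives contradicts the hypothesis that $\widehat G$ is a noded Schottky group with $\Omega^+(\widehat G)/\widehat G = \widehat S$, since $R_j$ must be a nontrivial parabolic and the stabilizer of a component over $P_j$ must present $P_j$.

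Given this, the number of strands of the $\widehat V_k$'s on $P_4$ is at least $6$. A pigeonhole on the pair of pants finishes the job: any ideal triangulation of $P_4$ consists of exactly three arcs, and any essential arc disjoint from such a triangulation lies inside one of the two ideal triangles and is forced to be isotopic (rel endpoints) to a side. Hence among the $\geq 6$ pairwise disjoint essential strands on $P_4$ there must be two, say $s$ and $s'$, that are isotopic rel endpoints at a pair of nodes $n_i,n_j$. Choose lifts $\tilde s$ and $\tilde{s'}$ into $\Delta$ sharing their initial endpoint $p$ on $\partial\Delta$; by the isotopy-lifting property they share the terminal endpoint $q$ as well, producing two distinct arcs inside $\Delta$ with the same pair of parabolic-fixed-point endpoints. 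A Euclidean circle meets the round disc $\Delta$ in at most one arc (it meets $\partial\Delta$ in at most two points), so these arcs lie on two distinct translates of defining circles, each of which is a Euclidean circle — the configuration excluded by the opening observation. The hardest step is the topological claim about $4$ crossings, particularly ruling out the conflating case in (b) where a few-crossing $V_k$ produces a Schottky group that fails to uniformize the specific $\widehat S$ (rather than trivializing a parabolic outright).
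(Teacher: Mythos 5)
Your endgame is attractive and genuinely different from the paper's: the paper contracts the defining circles to the vertices of a planar graph, shows the graph must be the octahedron with exactly $12$ edges, and derives a contradiction from an impossible labelling of the cube; you instead pigeonhole $\ge 4$ pairwise disjoint essential ideal arcs on the thrice-punctured sphere $P_4$ to force two parallel strands, hence two distinct translates of defining circles through the same two parabolic fixed points. That final step is sound (with the minor caveat that you must take the lift of $s'$ obtained by lifting the isotopy from $\tilde s$, not an arbitrary lift sharing the initial endpoint, and that essentiality of the strands needs a word --- it does follow, e.g.\ because a circular translate meets the round disc $\Delta$ in a single arc with two distinct endpoints).

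The genuine gap is in the counting step, exactly where you flagged it. Your claim that every defining loop meets $R_1\cup R_2\cup R_3$ at least four times is asserted as a purely topological fact, but the two-crossing case is not ruled out by the argument you give. If $V_k$ crosses only $R_1$, twice, with an essential $P_4$-arc cutting off $R_2$, then killing $V_k$ forces a primitive non-peripheral element $\gamma$ of $H_1=\mathrm{Im}(\pi_1(P_1^\circ))$ to become conjugate to $\bar R_2^{\pm1}$, i.e.\ parabolic. You conclude that the quotient of the component over $P_1$ is then not a once-punctured torus --- but that implication is false as stated: a rank-two free Kleinian group can uniformize a once-punctured torus on an invariant simply connected component while containing an \emph{accidental} parabolic (these are precisely the cusp groups on the boundary of the quasi-Fuchsian space of the punctured torus), and the hypothesis $\Omega^+(\widehat G)/\widehat G=\widehat S$ constrains only the quotients of the components, not the absence of accidental parabolics in their stabilizers. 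So the configuration you need to exclude is not excluded by the structure of $\widehat S$ alone, and without it you only get $\sum_j M_j\ge 6$, i.e.\ three strands on $P_4$, which is one short of what the pigeonhole needs. The fix is to do what the paper does and use the circle hypothesis here as well: if some $R_j$ meets the defining system in exactly two points, both crossings lie on a single $V_i$, the two resulting strands of $R_j$ each contract to a parabolic fixed point lying on both $\widehat C_i$ and $\widehat C'_i$, and two distinct circles cannot pass through the same two points without crossing. This gives $M_j\ge 4$ for every $j$, hence at least six strands on $P_4$, and your pigeonhole argument then closes the proof.
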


\subsection{The planar graph} We assume that there is a neoclassical noded 
Schottky group $\widehat G$ representing $\widehat S$, and we assume that we 
are given some set of defining loops $\widehat C_1,\ldots,\widehat C'_3$ for 
$\widehat G$, where the $\widehat C_i$ are all Euclidean circles. We will not 
need the explicit generators pairing these. Let $G$ be some Schottky group in a 
relative conical neighborhood of $\widehat G$. Let $C_1,\ldots,C'_3$ be the defining 
loops for $G$ obtained via vertical projection, and let $R_1$, $R_2$, $R_3$ be 
the geodesics on $S=\Omega(G)/G$ corresponding to the three red loops. Let 
${\mathcal D}$ be the fundamental domain for $G$ bounded by $\widehat C_1, 
\ldots,\widehat C'_3$.
Consider the planar graph ${\mathcal G}$ obtained as follows. We shrink the six 
defining loops on the boundary of $\mathcal D$ to points, which become the 
vertices of ${\mathcal G}$; the edges of ${\mathcal G}$ are the strands of the 
liftings of $R_1$, $R_2$ and $R_3$ to ${\mathcal D}$. It is clear that 
${\mathcal G}$, as an embedded graph, is well defined up to a homeomorphism of 
$\Ch$.

\begin{Lem} No edge of ${\mathcal G}$ has both endpoints at the same vertex. 
\end{Lem}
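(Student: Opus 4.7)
I argue by contradiction: suppose some strand $\widetilde R$ of a lift of some $R_i$ has both endpoints $x,y$ on the same defining loop $C_j$. Pick one of the two arcs of $C_j$ from $x$ to $y$, call it $\alpha$, and form the Jordan curve $\gamma = \widetilde R \cup \alpha \subset \overline{\mathcal D}$. Since $\gamma$ is a loop in $\Omega(G)\cup C_j$, its projection $\beta := P(\gamma)$ on $S$ is a (piecewise smooth) simple closed curve, consisting of a sub-arc of $R_i$ and a sub-arc of $V_j$, whose homotopy class lies in $N := \ker(\pi_1(S)\to G)$.

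Next, I place $V_j$ and $R_i$ in minimal position by replacing each $V_k$ by its geodesic representative on $S$; since $\mathcal G$ is defined only up to homeomorphism of $\Ch$, this is harmless. Then $V_j$ and $R_i$ admit no bigon, so $\beta$ is not null-homotopic on $S$. Choosing $\alpha$ so that $P(x)$ and $P(y)$ are consecutive along $V_j$ as well (possible on at least one of the two $V_j$-arcs in generic position), the sub-arc $P(\alpha)$ has no interior crossings with $R_i$, so $\beta$ lies in the closure of a single side of $R_i$; up to symmetry, $\beta\subset\overline{P_i}$.

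Since $\overline{P_i}$ is incompressible in $S$, the essential simple closed curve $\beta$ represents a nontrivial element of $\pi_1(P_i)$. On the other hand, the noded Schottky structure on $\widehat S$ realizes the stabilizer in $\widehat G\cong G$ of a component of $\widehat\Omega^+$ projecting onto the once-noded torus $\widehat P_i$ as a subgroup isomorphic to $\pi_1(P_i)$ (a free group of rank two); equivalently, the composite $\pi_1(P_i)\hookrightarrow\pi_1(S)\twoheadrightarrow G$ is injective and $\pi_1(P_i)\cap N=\{1\}$. Hence the image of $[\beta]$ in $G$ is nontrivial, contradicting $[\beta]\in N$. The case $\beta\subset\overline{S\setminus P_i}$ is handled identically, using the analogous injection for the complementary subsurface.

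The main obstacle I expect is the choice of $\alpha$ ensuring that $\beta$ lies on a single closed side of $R_i$: if $V_j$ meets $R_i$ in more than two points, neither arc of $C_j$ between $x$ and $y$ need project to an arc of $V_j$ whose interior is disjoint from $R_i$. In that case one must either first modify $\beta$ within its free-homotopy class---isotoping over inessential subdisks cut off by $R_i$---to land on one side, or else extract the contradiction directly from the cyclically reduced word in $G$ obtained by reading off the signed intersections of $\beta$ with $V_1,V_2,V_3$, using the structure of the rank-two subgroup $\pi_1(P_i)\subset G$ to force the element to be simultaneously trivial and nontrivial.
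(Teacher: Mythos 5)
Your strategy --- project the self-strand together with an arc of $C_j$ to a simple loop $\beta\subset S$ with $[\beta]\in N$, then contradict this by showing $\beta$ is essential in an incompressible subsurface whose fundamental group injects into $G$ --- has genuine gaps, and it discards the one piece of structure the paper's proof actually uses. First, the reduction to geodesic representatives is not harmless: $\mathcal G$ is built from the particular loops $C_1,\ldots,C'_3$ obtained by vertical projection from the circles $\widehat C_1,\ldots,\widehat C'_3$, and the subsequent counting (and the very next lemma, whose proof needs two tangent circles through two common points to coincide) depends on exactly these loops. Homotoping the $V_k$ to geodesics can delete edges of $\mathcal G$, and it deletes precisely those self-edges whose associated $\beta$ bounds a bigon with $R_i$; for such an edge $\beta$ is null-homotopic, $[\beta]\in N$ holds vacuously, and your argument yields no contradiction. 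So you have ruled out only \emph{essential} self-edges. Second, the complementary subsurface does not inject: in the paper's explicit example $w_2w_3$ is freely homotopic to $A_2A_3^{-1}$, an essential simple closed curve lying in $\overline{S\setminus P_1}=\overline{P_2\cup P_4\cup P_3}$ and belonging to $N$. Hence the case $\beta\subset\overline{S\setminus P_i}$ is not ``handled identically''; and since the arc $P(\alpha)$ of $V_j$ may cross $R_2$ and $R_3$ even while avoiding $R_1$, you cannot confine $\beta$ to the pair of pants $\overline{P_4}$, where boundary-parallelism would rescue the argument. Third, as you yourself note, when neither arc of $C_j$ from $x$ to $y$ projects into a single closed side of $R_i$, your argument is only a sketch.

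The paper's proof is one line and uses exactly the correspondence you set aside. An edge of $\mathcal G$ is a strand of a single lift $\tilde R$ of some $R_k$, and under the degeneration that undoes the vertical projection the whole of $\tilde R$ collapses to a single parabolic fixed point $z$ of $\widehat G$; each endpoint of the strand on $C_j$ records one passage of the circle $\widehat C_j$ through $z$. A self-edge therefore forces $\widehat C_j$ to pass through the same point $z$ twice, so $\widehat C_j$ is not a simple loop --- contradicting the fact that it is a Euclidean circle (indeed, contradicting the definition of a set of defining loops). This disposes of bigons and essential self-edges in one stroke, with no appeal to minimal position or to injectivity of subsurface groups.
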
 
\begin{proof} 
If we had such an edge, starting and ending say at $C_i$, then the corresponding 
defining loop, $\widehat C_i$, would not be simple. \end{proof}
We assume, from here on, that ${\mathcal G}$ contains no edge with both endpoints 
at the same vertex.

\begin{Lem} Given any two vertices of ${\mathcal G}$, there is at most one edge 
connecting them. \end{Lem}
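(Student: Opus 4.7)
The plan is to argue by contradiction. Suppose two distinct edges of $\mathcal{G}$ join the same pair of vertices, which correspond to two defining loops, say $\gamma_1$ and $\gamma_2$, among $C_1,C'_1,\ldots,C_3,C'_3$. By definition of $\mathcal{G}$, these two edges come from two distinct strands $\widetilde{R}$ and $\widetilde{R}'$ of liftings of $R_1$, $R_2$, or $R_3$ inside $\mathcal{D}$, each having one endpoint on $\gamma_1$ and the other on $\gamma_2$.

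The next step is to track what these two strands become under the deformation from $G$ to $\widehat G$ (which arises by pinching the red loops $R_1$, $R_2$, $R_3$). Each strand $\widetilde R$ projects to a simple arc on $S$ lying along one of the $R_k$; when $R_k$ is pinched to a node, the entire strand in $\mathcal D$ collapses, its two endpoints (on $\gamma_1$ and on $\gamma_2$) being pulled to a single parabolic fixed point on the boundary of $\widehat{\mathcal D}$. By definition of a noded Schottky group, the deformed loops $\widehat\gamma_1$ and $\widehat\gamma_2$ meet at this point tangentially, without crossing. Thus the two distinct strands $\widetilde R$, $\widetilde R'$ produce two distinct points of tangential contact between the same pair of deformed defining loops $\widehat\gamma_1$ and $\widehat\gamma_2$.

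Finally I would invoke the neoclassical hypothesis: both $\widehat\gamma_1$ and $\widehat\gamma_2$ are Euclidean circles. Two distinct Euclidean circles can share at most two points, and whenever they share two points they cross transversely at each. In particular, two distinct circles cannot be tangent at two distinct points without coinciding. Since $\widehat\gamma_1 \neq \widehat\gamma_2$, this yields the required contradiction.

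The only delicate step is the geometric assertion that each strand in $\mathcal D$ degenerates, under the pinching that produces $\widehat G$ from $G$, to a single parabolic fixed point at which the two deformed defining loops at its endpoints become tangent. This is the step that most needs justification; however, it is essentially the local model for how defining loops acquire nodes in the construction of noded Schottky groups and is implicit in the correspondence between $C_i$ and $\widehat C_i$ furnished by the vertical projection discussed in Section \ref{sec:vertical}.
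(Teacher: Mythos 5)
Your argument is correct and is essentially the paper's own proof: both reduce the existence of two edges between the same pair of vertices to the two defining circles $\widehat C$ and $\widehat C'$ being forced to pass through the same two parabolic fixed points, which is impossible for two distinct circles that meet without crossing. (You spell out the final tangency contradiction a bit more explicitly than the paper does; note that both arguments tacitly use that the two strands collapse to two \emph{distinct} parabolic fixed points, which holds because otherwise $\widehat C$ would pass twice through the same point and so could not be a simple loop, let alone a circle.)
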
 
\begin{proof} Suppose there were two edges starting at $C$ and end at $C'$, 
where $C$ and $C'$ are each one of loops $C_1,\ldots,C'_3$. Then, in 
$\Omega(G)$, there is an arc of the corresponding defining loop $C$ between 
these two edges, and there is likewise an arc of the corresponding defining loop 
$C'$. When we contract the red edges to points, these two arcs become arcs of 
the corresponding defining circles, $\widehat C$ and $\widehat C'$, which are 
now required to pass through the same two parabolic fixed points; this cannot 
be. 
\end{proof}

We assume, from here on, that $\mathcal G$ is such that there is at most one edge 
connecting any two vertices.
For each vertex $C_i$ (respectively, $C'_{i}$), we denote the number 
of edges of $\mathcal G$ ending at $C_i$ (respectively, $C'_{i}$) by $N_i$ 
(respectively, $N'_{i}$). Since $C_i$ and $C'_i$ are identified by an element 
of $G$, $N_{i}=N'_{i}$.
Since $N_{i}=N'_{i}$, $\sum N_i$ is the total number of edges of $\mathcal G$.
For $i=1,2,3$, let $V_{i}$ denote the projection of (the loop) $C_i$ to $S$. 
Observe that $V_1$, $V_2$, $V_3$ are three homologically independent simple 
disjoint loops on $S$. Then $N_i$ is the total number of points of 
intersection of $V_i$ with the dividing loops $R_1$, $R_2$ and $R_3$. Since the 
$R_j$ are dividing loops, each $N_i$ is even.
Similarly, let $M_j$ denote the sum of the number of crossings of $R_j$ with 
$V_1$, $V_2$ and $V_3$. Again, $M_j$ is necessarily even. It is clear that 
$M_j$ is the number of strands of liftings of $R_j$; hence $\sum M_j=\sum N_i$ 
is the total number of edges of $\mathcal G$.

\begin{Prop} For $j=1,2,3$, $M_j>2$. 
\end{Prop}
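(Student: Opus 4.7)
The plan is to rule out the two exceptional values $M_j=0$ and $M_j=2$; since we already know $M_j$ is a non-negative even integer, this will force $M_j\ge 4>2$.

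First, to handle $M_j=0$, I would observe that if $R_j$ were disjoint from every $V_i$, then $R_j$ would lie entirely in the $6$-holed sphere $S\setminus(V_1\cup V_2\cup V_3)$, which is the homeomorphic image under $P$ of the interior of $\mathcal D$. Thus $R_j$ would lift to a loop $\widetilde R_j\subset\mathcal D\subset\Omega(G)$, making the corresponding element $r_j\in G$ trivial. But the isomorphism $G\to\widehat G$ arising from the relative conical neighborhood construction of Section~\ref{sec:vertical} must send $r_j$ to a non-trivial parabolic element of $\widehat G$, since $\widehat G$ is obtained from $G$ precisely by pinching $R_1,R_2,R_3$. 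This contradiction kills the case $M_j=0$.

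Next, to handle $M_j=2$, I would first note that, since $R_j$ separates $S$, each simple loop $V_i$ crosses $R_j$ an even number of times (it enters and leaves the subsurface bounded by $R_j$ the same number of times). Hence $M_j=2$ forces exactly one $V_i$ --- say $V_1$ --- to cross $R_j$ twice, with $V_2$ and $V_3$ disjoint from $R_j$. Lifting to $\mathcal D$, the two crossings on $S$ produce four strand-endpoints on $\partial\mathcal D$: two on $C_1$ and two on $C'_1$. These form two strands, i.e., two edges of $\mathcal G$ with all four endpoints in $\{C_1,C'_1\}$. By the lemma forbidding an edge of $\mathcal G$ with both endpoints at the same vertex, neither strand can lie entirely on $C_1$ or entirely on $C'_1$; each must therefore run from $C_1$ to $C'_1$. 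But this yields two distinct edges of $\mathcal G$ joining $C_1$ and $C'_1$, contradicting the lemma that at most one edge connects any pair of vertices.

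I do not expect a substantial obstacle: the parity step is a standard topological observation, and the two exceptional values $M_j\in\{0,2\}$ are eliminated by a direct appeal to the two combinatorial lemmas just proved for $\mathcal G$. The only mild point requiring care is that, in the case $M_j=0$, the lift of $R_j$ really does sit inside a single translate of $\mathcal D$; this is immediate since consecutive translates of $\mathcal D$ are separated by lifts of the $V_i$'s, none of which $R_j$ can cross.
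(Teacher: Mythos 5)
Your proof is correct and follows essentially the same route as the paper's: $M_j=0$ is excluded because $R_j$ would lift to a loop (contradicting that $r_j$ is pinched, hence non-trivial), and $M_j=2$ is excluded because the two resulting strands would give two edges of $\mathcal G$ joining $C_1$ and $C'_1$, violating the preceding lemmas. Your extra remarks on parity and on why each strand must run from $C_1$ to $C'_1$ merely make explicit what the paper leaves implicit.
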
 
\begin{proof} We cannot have $M_j=0$, for then the loop $R_j$ does not cross 
any of the $V_i$, so it lifts to a loop, which cannot be.
If $M_j=2$, then $R_j$ crosses exactly one of the $V_i$, say it is $V_1$, and 
it crosses $V_1$ twice. Hence the lift of $R_j$ defines exactly two essential 
strands. Both of these strands have one endpoint on $C_1$ and the other endpoint 
on $C'_1$, which also cannot be. 
\end{proof}

Since each $M_j\ge 4$, the total number of strands is at least $12$. However, 
since we do not have more than one edge from any one vertex to any other vertex, and 
each $N_i$ is even, we have that the total number of strands, which is the same 
as the total number of edges, is at most $12$. Hence the number of edges is 
exactly $12$, and there are exactly four edges emanating from each vertex.
It is easy to see that, up to isomorphism, there is exactly one graph 
with $6$ vertices; $4$ edges at each vertex; no edge having the same vertex 
at both ends; and no two edges having the same pair of vertices at their ends. 
We note that this graph is homogeneous, and that it has essentially only one 
embedding in the extended complex plane. We can think of the vertices as the 
points, $0$, $\infty$, $\pm 1$ and $\pm i$, and the edges as lying on the real 
and imaginary axes, and on the unit circle. Observe that this graph has as 
group of isometries the symmetric group on four letters, ${\mathcal S}_{4}$, 
generated by the M\"obius transformations $A(z)=iz$ and $B(z)=\frac{1-z}{1+z}$. 
We may think of the vertices as the midpoints of the faces of a cube; that 
is, $\mathcal G$ is the dual graph to the graph on ${\mathbb S}^2$ obtained 
from the regular cube by projection from the origin.
We have shown that the strands of $\mathcal G$ divide $D$ into $8$ regions, 
corresponding to the vertices of the cube. We have the natural projection 
from $D$ onto $S$, which is divided into the $4$ subsurfaces, $P_1$, $P_2$, 
$P_3$ and $P_4$, by the red loops. Hence we can identify each of the 
corresponding $8$ regions of $\mathcal D$ as projecting onto one of these four 
subsurfaces. Further, for every strand of $\mathcal G$, exactly one of the two 
sides of this strand projects onto the 3-holed sphere, $P_4$, and the other 
side projects onto one of the surfaces $P_{1}$, $P_{2}$ or $P_{3}$. It follows 
that $4$ of these regions project onto $P_4$, and that, for each of these 
regions projecting onto $P_{4}$, we must have that each of the $3$ neighboring 
regions projects onto a distinct $P_{j}$, $j=1,2,3$. It is easy to see that this 
is not possible (try to put a value $1$, $2$, $3$ or $4$ to each vertex of the 
cube so that (i) for any two adjacent vertices, one of the values is $4$, and 
(ii) at each vertex labeled $4$, all three values, $1$, $2$ and $3$, occur at an 
adjacent vertex).
This completes the proof of the fact that there is no neoclassical noded 
Schottky group representing any of our noded Riemann surfaces.

\section{A sufficiently complicated noded Schottky group}
 In this section, we 
construct a particular example of a sufficiently complicated maximally noded 
Schottky group of genus $3$. The construction starts with the surface $S_0$ of 
the last section, with the three red loops, $R_1$, $R_2$ and $R_3$ on it. We 
enlarge this set of simple disjoint loops by adjoining three non-dividing loops, 
which we think of as {\em green loops}, $G_1$, $G_2$ and $G_3$, where, for 
$i=1,2,3$, $G_{i}$ lies in the subsurface $P_i$ (see figure \ref{figure2}).

\begin{figure}
\centering
\includegraphics[width=8cm]{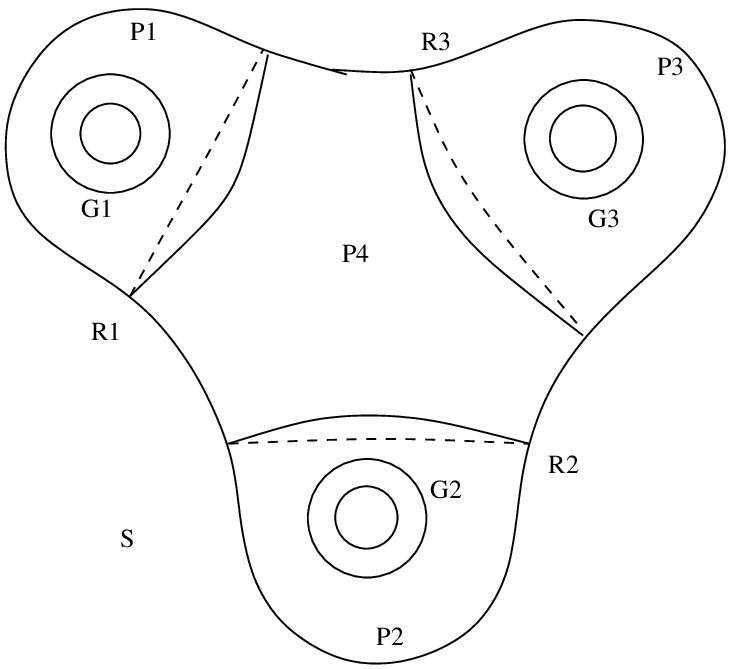}
\caption{}
\label{figure2}
\end{figure}

It was shown by
Hidalgo \cite{Hidalgo:NodedSchottky} that every noded Riemann surface can be 
uniformized by at least one noded Schottky group. However, for our purposes, we 
will need to carefully choose our noded Schottky group.

\subsection{Explicit conformal specifications} We make some additional 
requirements on the explicit conformal structure on $S_0$; we require that $S_0$ 
admit a conformal automorphism $\phi$ of degree $3$, where $\phi(R_1)=R_2$, 
$\phi(R_2)=R_3$, $\phi(R_3)=R_1$, $\phi(G_1)=G_2$, $\phi(G_2)=G_3$ and 
$\phi(G_3)=G_1$. We also require that $S_0$ admit a reflection, $\psi$, with the 
properties that the three dividing geodesics, $R_1$, $R_2$, $R_3$ are invariant, 
but not pointwise fixed, under $\psi$, and the three non-dividing geodesics, 
$G_1$, $G_2$, $G_3$, are pointwise fixed under $\psi$.
There is a two real parameter family of such surfaces, we choose $S_0$ to be 
any surface within this family.

\subsection{Explicit topological specifications} We next need to choose a 
particular set of loops to define a Schottky group. We choose the loops, $W_1$, 
$W_2$, $W_3$, to have the following properties (see figure \ref{figure3}).

\begin{figure}
\centering
\includegraphics[width=8cm]{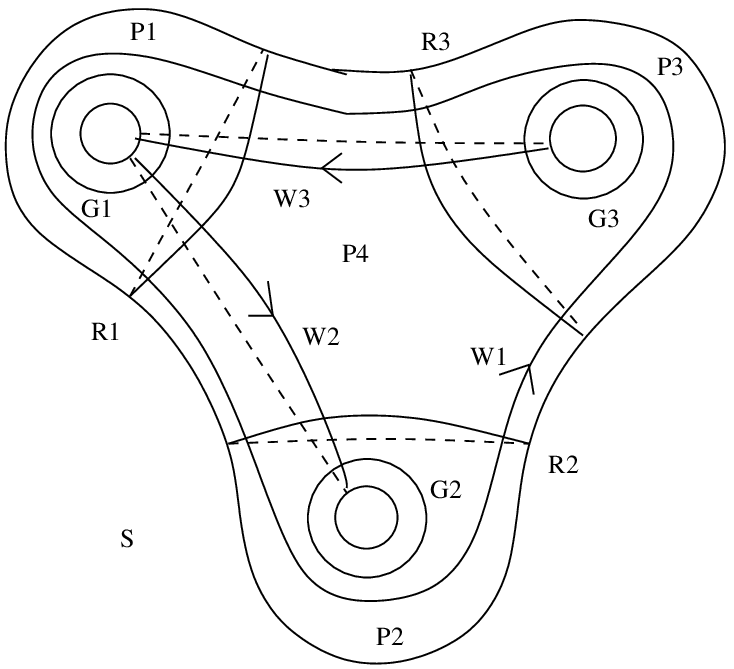}
\caption{}
\label{figure3}
\end{figure}

The loop $W_1$
crosses each of $R_1$, $R_2$ and $R_3$ exactly twice; is disjoint from each of 
$G_1$, $G_2$ and $G_3$; is invariant under the rotation $\phi$; and is pointwise 
fixed by the reflection $\psi$.
The loop $W_2$ crosses each of $R_1$ and $R_2$ exactly twice; is disjoint from 
$R_3$; crosses each of $G_1$ and $G_2$ exactly once; is disjoint from $G_3$; 
and is invariant under the reflection, $\psi$.
The loop $W_3=\phi\ngn (W_2)$.
One easily sees that $\phi(W_2)$ is freely homotopic to the product 
$W_3^{-1}\cdot W_2^{-1}$, from which it follows that the conformal map $\phi$ 
lifts to the Schottky group $G_0$ defined by these three loops. Since $\psi$ 
preserves each of $W_1$, $W_2$, $W_3$, it also lifts to an action on 
$\Omega(G_0)$.
It follows from well known properties of Schottky groups that the lift $\tilde 
\phi$ of $\phi$ is a M\"obius transformation, and that the lift $\tilde\psi$ of 
$\psi$ is an orientation-reversing M\"obius transformation with a Euclidean 
circle of fixed points.

\begin{Prop} For the Schottky group $G_0$, the set of loops, 
$R_1,R_2,R_3,G_1,G_2,G_3$ is pinchable. 
\end{Prop}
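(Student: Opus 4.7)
The plan is to verify the three requirements of pinchability for the set $\{R_1, R_2, R_3, G_1, G_2, G_3\}$ in turn: the loops must be simple and pairwise disjoint, each must represent an element of $G_0$ that is not a non-trivial power, and any two must represent elements generating non-conjugate cyclic subgroups of $G_0$. Simplicity and pairwise disjointness are immediate from the construction of $S_0$ and the specifications given just above (see Figure \ref{figure2}).

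Next I would exploit symmetry to cut the work in half. Since $\phi$ lifts to a M\"obius transformation $\tilde\phi$ normalizing $G_0$, conjugation by $\tilde\phi$ is an automorphism of $G_0$; because $\phi$ cyclically permutes $R_1,R_2,R_3$ and also $G_1,G_2,G_3$, it suffices to verify that $R_1$ and $G_1$ represent primitive non-trivial elements of $G_0$ to get the same for the remaining four loops. Non-triviality of $R_1$ (respectively $G_1$) in $G_0$ is the statement that its homotopy class does not lie in the normal closure $N \subset \pi_1(S_0)$ of $\{W_1, W_2, W_3\}$; I would detect this by constructing, from the explicit intersection data of $W_k$ with $R_i$ and $G_j$ recorded above, a homomorphism $\rho\colon \pi_1(S_0) \to H$ (for some convenient target $H$) that kills $W_1, W_2, W_3$ but sends $R_1$ and $G_1$ to non-trivial elements. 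Once non-triviality is in hand, primitivity is automatic from a standard fact: the image in a free quotient $G_0$ of a non-trivial simple closed curve on a closed surface is always primitive, since a simple loop cannot be freely homotopic to a non-trivial proper power of another loop.

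For the non-conjugacy of the cyclic subgroups, I would use that if primitive elements $w_1, w_2 \in G_0$ represent disjoint simple loops on $S_0$ and $\langle w_1\rangle$ is $G_0$-conjugate to $\langle w_2 \rangle$, then $w_1$ is $G_0$-conjugate to $w_2^{\pm 1}$, which forces the two loops to be freely homotopic on $S_0$. Two freely homotopic disjoint simple loops on a closed surface must cobound an annulus, and no two of $R_1, R_2, R_3, G_1, G_2, G_3$ do: the $R_i$ are dividing while the $G_j$ are not, and within each triple the three loops lie in distinct components of $S_0 \setminus (R_1 \cup R_2 \cup R_3)$.

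The main obstacle is the non-triviality step in condition (b). The symmetry reductions are formal, as are the passage from non-triviality to primitivity and from non-free-homotopy on $S_0$ to non-conjugacy in $G_0$; but constructing $\rho$ explicitly requires genuinely tracking how $R_1$ and $G_1$ sit in $\pi_1(S_0)$ relative to $W_1, W_2, W_3$. The prescribed crossing pattern of the $W_k$ with the $R_i$ and $G_j$, together with the relation $W_3 = \phi^{-1}(W_2)$ and the $\psi$-invariance properties, should reduce this to a finite, tractable bookkeeping computation.
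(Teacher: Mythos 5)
Your symmetry reduction via $\tilde\phi$ is fine, and simplicity and disjointness are indeed immediate, but each of the three substantive steps has a problem. The non-triviality step, which you yourself flag as the main obstacle, is genuinely incomplete, and the part of it that matters most cannot be done by the kind of bookkeeping you describe: the $R_i$ are dividing loops, hence null-homologous, so every homomorphism to an abelian target kills them and no intersection-number computation detects them; a usable target $H$ for your $\rho$ would have to be non-abelian, and producing it is essentially the whole content of the claim. (For the $G_i$ nothing fancy is needed: $G_1\bullet W_2=1$, and a loop with non-zero algebraic intersection number with a loop that lifts to a loop cannot itself lift to a loop in a planar covering.) The paper handles the $R_i$ by surgery rather than by a homomorphism: if $R_2$ lifted to a loop, cutting and pasting it with $W_2$ would produce a simple loop that still lifts to a loop but crosses $W_1$ exactly once, which is impossible by the preceding observation.

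The other two steps rest on statements that are false. Primitivity of the image in $G_0$ is not automatic for simple closed curves: a simple loop is primitive in $\pi_1(S_0)$, but primitivity is not inherited under the quotient $\pi_1(S_0)\to G_0$ --- for instance a $(2,1)$-curve on one handle of the boundary of a handlebody is simple on the boundary surface yet equals the square of a free generator of the handlebody group. More seriously, your non-conjugacy argument runs in the wrong direction: conjugacy in $G_0$ is free homotopy in the handlebody, which is strictly weaker than free homotopy on $S_0$, so showing that no two of the six loops cobound an annulus on $S_0$ does not rule out their being conjugate in $G_0$. The invariant that does the job, and the one the paper uses, is the vector of geometric intersection numbers with $W_1,W_2,W_3$: for geodesics in minimal position these count the occurrences of each generator $d_i^{\pm 1}$ in the cyclically reduced word representing the loop, hence are conjugacy invariants in $G_0$, and the six resulting columns of the $6\times 3$ crossing matrix are pairwise distinct.
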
 
\begin{proof} 
We need to show that none of these loops lifts to a loop, and that no two of 
them define conjugate elements of $G_0$.
Since $G_1$ crosses $W_2$ exactly once, it does not lift to a loop; likewise 
$G_2$ and $G_3$ do not lift to loops.
If say $R_2$ were to lift to a loop, then by cutting and pasting with $W_2$, we 
would obtain a loop that lifts to a loop and that crosses $W_1$ exactly once, 
which cannot be. Hence $R_1$, $R_2$ and $R_3$ do not lift to loops.
If two of these loops were to define the same element of $G_0$, then they would 
have the same crossing pattern with the triple of loops, $W_1,W_2,W_3$. But in 
fact, if one looks at the $6\times 3$ matrix of geometric crossings of these 
loops with the $W_i$, no two columns are the same; hence no pair of these loops 
can define the same or conjugate elements of $G_0$. 
\end{proof}

We have shown that the loops $R_1,R_2,R_3,G_1,G_2,G _3$ are pinchable; hence we 
can define the noded Schottky group $\widehat G_0$ obtained by pinching these 
loops. We require that the pinching be done so that the maps $\tilde \phi$ and 
$\tilde \psi$ are defined and normalize the Schottky group at each step of the 
pinching; hence they are defined and normalize $\widehat G_0$. On the noded 
Riemann surface $\widehat S_0$, obtained by pinching these six loops, the loops 
$\widehat W_1$, $\widehat W_2$ and $\widehat W_3$ are well defined.
We remark that, as a consequence of the fact that $\tilde\psi$ lifts to a 
homeomorphism of $\Omega^+(\widehat G_0)$, we have that the lift of $\tilde 
\psi$ is a fractional linear reflection, from which it follows that its fixed 
points set is a Euclidean circle. Since $\widehat W_1$ is pointwise fixed under 
$\tilde \psi$, it follows that the projection of this circle of fixed points, 
including the parabolic fixed points it passes through, is exactly $\widehat 
W_1$.

\subsection{} 
It is easy to observe that the loops $W_2$ and $W_3$ are parallel going from 
$R_1$ to $G_1$ and back to $R_1$; hence there are lifts of $\widehat W_2$ and 
$\widehat W_3$, that are parallel between three successive parabolic fixed 
points.
We need to check that this same phenomenon occurs for every choice of defining 
loops. In what follows, we assume that we are given some set of three defining 
loops, $B_1$, $B_2$ and $B_3$, for our particular Schottky group, which we refer 
to as {\em black loops}. We assume that it does not happen that there are two 
arcs of black loops that are parallel between three successive crossings of red 
and green loops.
In what follows, we denote the homology intersection number of the loops, $A$ 
and $B$ by $A\times B$, and we denote the geometric intersection number by 
$A\bullet B$.

\begin{Prop} Let $\tilde S\to S$ be a regular covering of a surface, where 
$\tilde S $ is planar, and let $A$ and $B$ be loops on $S$ that lift to loops on 
$\tilde S$. Then $A\times B=0$. 
\end{Prop}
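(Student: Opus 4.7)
The plan is to lift the intersection pairing upstairs and exploit the fact that the intersection form on $H_{1}$ of a planar surface is trivial. By hypothesis, there are loops $\tilde A$ and $\tilde B$ in $\tilde S$ lifting $A$ and $B$ respectively. Writing $\pi : \tilde S \to S$ for the covering and $\Gamma$ for its deck group, the full preimage decomposes as $\pi^{-1}(B) = \bigcup_{g \in \Gamma} g(\tilde B)$, because $\tilde B$ is itself a lift of $B$ and the covering is regular.

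Next, I would relate $A \times B$ to an intersection count upstairs. After putting $A$ and $B$ in transverse position on $S$, there are finitely many intersection points, and each such point has a unique preimage lying on $\tilde A$ (since $\pi$ restricted to $\tilde A$ is a degree-one covering of $A$). That preimage lies on exactly one translate $g(\tilde B)$ of $\tilde B$, and the local intersection sign is preserved by $\pi$, which is a local orientation-preserving homeomorphism. Consequently
$$
A \times B \;=\; \sum_{g \in \Gamma} \tilde A \cdot g(\tilde B),
$$
where only finitely many terms on the right are non-zero.

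Finally, I would invoke planarity: since $\tilde S$ embeds in $\Ch$, each $g(\tilde B)$ is in particular a loop in $\Ch$, hence null-homologous there. Because algebraic intersection numbers of loops are computed by signed local counts at transverse intersection points, the value of $\tilde A \cdot g(\tilde B)$ is the same whether one works in $\tilde S$ or in the ambient $\Ch$. In $\Ch \cong S^{2}$ one has $H_{1}=0$, so every intersection number vanishes, giving $\tilde A \cdot g(\tilde B) = 0$ for each $g$. Summing yields $A \times B = 0$.

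The main technical point to justify with care is the identity $A \times B = \sum_{g \in \Gamma} \tilde A \cdot g(\tilde B)$, i.e., the bookkeeping that matches each transverse intersection downstairs with a unique intersection of $\tilde A$ against a specific translate of $\tilde B$, with the correct sign. Once that is in place, the planarity argument at the end is essentially automatic.
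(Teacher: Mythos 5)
Your argument is correct and follows essentially the same route as the paper: fix a closed lift $\tilde A$, decompose the preimage of $B$ into translates of a closed lift $\tilde B$, establish the identity $A\times B=\sum \tilde A\cdot g(\tilde B)$, and kill each summand using planarity of $\tilde S$. The paper's proof is just a terser version of exactly this bookkeeping.
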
 
\begin{proof} 
We can assume without loss of generality that $A$ and $B$ cross a finite number 
of times. Let $\tilde A$ be any lift of $A$, and let $\tilde B_1,\ldots,\tilde 
B_k$ be the set of lifts of $B$ that cross $\tilde A$. Since $\tilde S$ is 
planar, for each $i$, $\tilde A\times \tilde B_i=0$. The result follows from the 
fact that $A\times B=\sum\tilde A\times \tilde B_i$. 
\end{proof}

\begin{Cor}\label{Cor:xprod} If $B$ is a black loop, then $\sum_{i=1}^3 B\times 
G_i=0$. 
\end{Cor}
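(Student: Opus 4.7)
The plan is to produce a loop on $S_0$ homologous to $-(G_1+G_2+G_3)$ that lifts to a loop in $\Omega(G_0)$, and then to invoke the preceding Proposition. The candidate loop will be $W_1$, and the key input is a homology identity arising from the anti-conformal involution $\psi$.

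First I would identify the full fixed-point set of $\psi$ on $S_0$. By construction the four simple closed geodesics $W_1,G_1,G_2,G_3$ are all pointwise fixed, and the fixed set of an anti-holomorphic involution of a closed Riemann surface is a disjoint union of real-analytic simple closed curves. Harnack's inequality for a Riemann surface of genus $3$ allows at most $g+1=4$ such components, so the fixed set of $\psi$ is exactly $W_1\cup G_1\cup G_2\cup G_3$.

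Next I would invoke Klein's theorem on M-curves to conclude that $\psi$ is of dividing type, so that the fixed set separates $S_0$ into two halves interchanged by $\psi$. One can also verify this directly: in the three-holed sphere $P_4$ the three arcs of $W_1\cap P_4$ form a cycle connecting $R_1,R_2,R_3$ and split $P_4$ into two triangular subregions swapped by $\psi$; in each one-holed torus $P_i$ the loop $G_i$ together with the $\psi$-fixed arc $W_1\cap P_i$ splits $P_i$ into two subregions swapped by $\psi$; and these pieces glue together consistently because $\psi$ is globally defined. Orienting the four fixed curves as boundary components of one of the two halves yields
\[
[W_1]+[G_1]+[G_2]+[G_3]=0\quad\text{in}\quad H_1(S_0;\mathbb{Z}).
\]

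Finally, $W_1$ is one of the defining loops used to construct the Schottky group $G_0$, and by hypothesis every black loop $B$ is also a defining loop for $G_0$; hence both $W_1$ and $B$ lift to loops in $\Omega(G_0)$. The preceding Proposition applied to $(W_1,B)$ then gives $W_1\times B=0$, and combining this with bilinearity of the intersection form and the homology identity yields
\[
\sum_{i=1}^3 B\times G_i = B\times\bigl([G_1]+[G_2]+[G_3]\bigr) = -B\times W_1 = 0.
\]
I expect the main obstacle to be justifying the separation step cleanly: Harnack together with Klein does it quickly but invokes some real-algebraic machinery, while the direct verification is elementary but requires a careful case analysis on the four subsurfaces.
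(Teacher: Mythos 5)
Your proof is correct and follows the paper's argument exactly: both reduce the claim to $B\times W_1=0$ via the homology relation $[W_1]=\pm([G_1]+[G_2]+[G_3])$ together with the preceding proposition on loops that lift to loops in a planar regular covering. The only difference is that the paper simply observes the homology relation from the explicit construction (where $W_1$ is freely homotopic to $B_3^{-1}B_1^{-1}B_2^{-1}$ and each $G_i$ is homologous to $\pm B_i$), whereas you derive it via Harnack's inequality and Klein's theorem on dividing M-curves --- valid, but heavier machinery than the statement requires.
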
 
\begin{proof} Observe that, after appropriate orientation, 
$W_1$ is homologous to $G_1+G_2+G_3$. Since $B$ and $W_1$ both lift to loops, 
$\sum B\times G_i=B\times W_1=0$. 
\end{proof}

\begin{Prop} No $B_j$ lies entirely within any one of the $P_i$; i.e., every 
black loop crosses at least one red loop. 
\end{Prop}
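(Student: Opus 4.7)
The plan is to prove the contrapositive: if a black loop $B_j$ lies entirely in some $P_i$, we derive a contradiction. The key observation to use throughout is that $B_j$, being a defining loop of $G$, lifts to a loop in $\Omega(G_0)$, whereas each of the pinched loops $R_k, G_k$ corresponds to a non-trivial (in fact parabolic after pinching) element of $G_0$ and therefore does \emph{not} lift to a loop. Consequently, $B_j$ cannot be freely homotopic in $S_0$ to any $R_k$ or any $G_k$.

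Suppose $B_j \subset P_i$. First, handle the easy case $i=4$: $P_4$ is a pair of pants, so every simple closed curve in it is either null-homotopic or freely homotopic to one of its boundary components $R_1, R_2, R_3$. A null-homotopic $B_j$ is impossible since the $B_j$ represent a homologically independent triple on $S_0$; a $B_j$ freely homotopic to some $R_k$ is excluded by the lifting observation.

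For the case $i \in \{1,2,3\}$, the subsurface $P_i$ is a one-holed torus with boundary $R_i$. Since $B_j$ lies in $P_i$ and $G_k \subset P_k$ for $k\ne i$, we have $B_j \bullet G_k = 0$ and hence $B_j \times G_k = 0$ for $k\ne i$. Corollary \ref{Cor:xprod} then forces $B_j \times G_i = 0$. By the classification of simple closed curves on a one-holed torus, $B_j$ is either null-homotopic, boundary-parallel to $R_i$, or an essential non-separating curve. The first two alternatives are ruled out exactly as above. In the remaining alternative, both $B_j$ and $G_i$ are essential non-separating simple closed curves on the one-holed torus $P_i$, and they have zero algebraic intersection.

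The main technical step is then the following classical fact: on a one-holed torus, the isotopy (equivalently, free homotopy) classes of essential non-separating simple closed curves are in bijection with primitive classes in $H_1 \cong \mathbb{Z}^2$ modulo sign, with geometric (and algebraic) intersection numbers matching the determinant pairing. In particular, two essential non-separating simple closed curves with vanishing algebraic intersection are freely homotopic. Applying this to $B_j$ and $G_i$ gives that $B_j$ is freely homotopic to $G_i$, which contradicts the lifting observation and completes the proof. The only non-routine ingredient is the invocation of this classification, which I would either cite or sketch by noting that a primitive homology class on a one-holed torus is represented by a unique geodesic (hence unique isotopy class) in any hyperbolic metric.
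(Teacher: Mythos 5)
Your argument is correct, but for the main case it takes a genuinely different route from the paper. You both dispose of $P_4$ the same way (every essential simple closed curve in a pair of pants is boundary-parallel, hence would be freely homotopic to a red loop, which does not lift to a loop). For the one-holed tori, however, the paper argues purely group-theoretically and never mentions the green loops: if $B_j$ is non-separating in $P_i$, then $R_i$ is freely homotopic to $[B_j,c]=B_j\cdot(cB_j^{-1}c^{-1})$ for a suitable loop $c$, i.e.\ to a product of $B_j$ with a conjugate of its inverse; since $B_j$ lies in the normal subgroup defining the covering, so does this product, and $R_i$ would lift to a loop. You instead combine Corollary \ref{Cor:xprod} with the disjointness $B_j\cap G_k=\emptyset$ for $k\ne i$ to force $B_j\times G_i=0$, and then invoke the classification of essential non-separating simple closed curves on a one-holed torus by primitive homology classes (with intersection given by the determinant pairing) to conclude $B_j\simeq G_i$, contradicting the fact that $G_i$ does not lift to a loop. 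Both are valid; the paper's version is more self-contained (it needs neither Corollary \ref{Cor:xprod} nor any curve classification, only normality of the covering subgroup), while yours is more systematic about the trichotomy (null-homotopic, boundary-parallel, non-separating) that the paper leaves implicit. One small caution about your closing justification: a primitive homology class on a one-holed torus is represented by a unique isotopy class of \emph{simple} closed curve, not by a unique closed geodesic --- many non-simple free homotopy classes share the same primitive homology class --- so you should cite the standard classification (e.g.\ via slopes on the torus) rather than appeal to uniqueness of geodesic representatives.
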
 
\begin{proof} It is essentially obvious that no $B_i$ can lie entirely within 
the planar region, $P_4$, for every simple homotopically non-trivial loop in 
$P_4$ is parallel to the boundary; i.e., it is freely homotopic to a red loop.
If some $B_j$ were to lie in one of the tori, $P_1$, $P_2$ or $P_3$, then the 
commutator of this loop with a conjugate loop would be freely homotopic to the 
red loop on the boundary, which cannot be, as the red loop does not lift to a 
loop. 
\end{proof}

In what follows, the subsurfaces, $P_1$, $P_2$, $P_3$, are called the {\em 
tori}, and the subsurface $P_4$ is called the {\em planar region}. Also, the 
arcs of black loops in each of the tori, are called {\em strands}. The 
proposition above shows that each black loop contains at least one strand.
For some purposes, we will replace the red boundary of a torus by a point, so 
that we can think of the torus as closed. If two strands cross at this point, 
then the endpoints of one of them on the red boundary separates the endpoints of 
the other; in this case, we say that the strands {\em cross} on the red 
boundary. Otherwise, we say that they are {\em parallel}. Of course, two simple 
homotopically non-trivial loops on a torus that meet at exactly one point either 
cross at that point or are homotopic (up to orientation).

\begin{Prop} Every $R_i$ is crossed by at least one $B_j$, and every $G_i$ is 
crossed by at least one $B_j$. 
\end{Prop}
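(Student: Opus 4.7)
The plan is to argue by contradiction using the fact that the complement of any Schottky set of defining loops in $S$ is planar. Suppose some $L \in \{R_1, R_2, R_3, G_1, G_2, G_3\}$ is disjoint from each $B_j$. Since the $B_j$ are three homologically independent simple loops bounding a planar fundamental domain for $G_0$ in $\Omega(G_0)$, the complement $S \setminus (B_1 \cup B_2 \cup B_3)$ is homeomorphic to a sphere with six boundary circles, and $L$ lies inside this $6$-holed sphere.

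Next, I would invoke the standard classification of simple closed curves on a planar surface: $L$ must be either null-homotopic, or freely homotopic to a boundary component. The first possibility is immediately ruled out because every $R_i$ and every $G_i$ is essential in $S$. So $L$ is freely homotopic in $S$ to some $B_j^{\pm 1}$.

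Since $B_j$ is a Schottky defining loop, it lifts to a loop under the Schottky covering $\Omega(G_0) \to S$; and whether a curve lifts to a loop depends only on its free homotopy class. Hence $L$ itself would lift to a loop. But the Proposition proved just above established the pinchability of $\{R_1, R_2, R_3, G_1, G_2, G_3\}$, and its proof explicitly verified that none of these six loops lifts to a loop in the Schottky covering---each $G_i$ crosses $W_2$ exactly once, and each $R_i$ is handled by the cut-and-paste argument against $W_1$. This is the required contradiction, so $L$ must in fact cross at least one $B_j$.

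The sole delicate point is the opening identification of $S \setminus (B_1 \cup B_2 \cup B_3)$ as a genuine $6$-holed sphere; this rests on the fact that $B_1, B_2, B_3$ are defining loops for a Schottky group of rank $3$, so their preimage in $\Omega(G_0)$ bounds a planar fundamental domain that projects back to give the asserted complement. Once this topological setup is in place, the rest of the argument is a direct consequence of the definitions and of the pinchability already proved.
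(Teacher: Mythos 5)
Your overall strategy is the same as the paper's (the paper's proof is the one-line observation that any loop disjoint from all the $B_j$ lifts to a loop, while the $R_i$ and $G_i$ do not), but the way you justify that observation contains a genuine error. You claim that a simple closed curve in the $6$-holed sphere $S\setminus(B_1\cup B_2\cup B_3)$ must be either null-homotopic or freely homotopic to a boundary component. That classification is false for planar surfaces with four or more boundary components: an essential simple closed curve in a sphere with $6$ holes merely separates the boundary circles into two nonempty families, and it is boundary-parallel only when one of those families is a singleton. There is no reason a red or green loop disjoint from the black loops would be of that special type --- for instance $R_1$ separates a one-holed torus from a genus-two piece of $S$, and nothing forces its separation of the six boundary circles to isolate a single one. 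So the step ``hence $L$ is freely homotopic in $S$ to some $B_j^{\pm1}$'' does not follow.

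The conclusion you want is still true, and the repair is to replace free homotopy by membership in the defining normal subgroup. A loop lifts to a loop under the Schottky covering $\Omega(G_0)\to S$ exactly when its class lies in the kernel $N$ of $\pi_1(S)\to G_0$, which is the normal closure of the classes of $B_1,B_2,B_3$. The fundamental group of the planar complement $S\setminus(B_1\cup B_2\cup B_3)$ is free and generated by boundary-parallel loops, each of which is conjugate in $\pi_1(S)$ to some $B_j^{\pm1}$; hence \emph{every} loop contained in the complement, boundary-parallel or not, represents an element of $N$ and therefore lifts to a loop. With that substitution your contradiction with the non-liftability of the $R_i$ and $G_i$ (established in the pinchability proposition) goes through, and the argument coincides with the paper's.
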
 
\begin{proof} Every loop that is not crossed by some $B_j$ lifts to a loop, and 
we require that the $R_i$ and the $G_i$ do not lift to loops. 
\end{proof}

\begin{Prop} Every black loop contains at least two strands. 
\end{Prop}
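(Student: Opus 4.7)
The plan is to argue by contradiction: assume some black loop $B$ has exactly one strand, and that this strand lies in (say) $P_1$. Then $B$ does not enter $P_2$ or $P_3$, so in minimal position $B$ meets $R_1$ at exactly two points and is disjoint from $R_2, R_3, G_2, G_3$. Decompose $B = \alpha \cup \beta$, where $\alpha \subset P_1$ is the strand and $\beta \subset \overline{P_4}$ is the complementary arc, both having endpoints at $B \cap R_1$. Close up $\alpha$ by an arc of $R_1$ into a simple loop $\tilde\alpha \subset P_1$.

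Since every loop in the planar subsurface $P_4$ is null-homologous in $S_0$ (each $R_i$ is a dividing curve), additivity of homology classes yields $[B] = [\tilde\alpha]$ in $H_1(S_0)$, with $[\tilde\alpha] \in H_1(P_1) = \langle G_1, \mu_1 \rangle$, where $\mu_1$ denotes a class dual to $G_1$. Because $B$ lifts to a loop, $[B]$ lies in the subgroup $N_{\mathrm{ab}} \subset H_1(S_0)$ generated by $[W_1], [W_2], [W_3]$. The key step is to pin down these three classes using the symmetries of the $W_i$: since $W_1$ is $\phi$-invariant and disjoint from every $G_j$, we get $[W_1] = \lambda(G_1 + G_2 + G_3)$; since $W_2$ is $\psi$-invariant and meets $G_1, G_2$ each once while $\psi$ fixes each $G_j$ pointwise and therefore acts as $-1$ on the duals $\mu_j$, we get $[W_2] = \pm\mu_1 \pm \mu_2$; and $[W_3] = \phi_*^{-1}[W_2] = \pm\mu_1 \pm \mu_3$. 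A one-line linear-algebra check then shows $N_{\mathrm{ab}} \cap H_1(P_1) = 0$, so $[\tilde\alpha] = 0$.

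Being a simple loop in the one-holed torus $P_1$ with vanishing homology class, $\tilde\alpha$ is either null-homotopic or parallel to $R_1$; in either case the strand $\alpha$ is isotopic rel its endpoints on $R_1$ to an arc of $R_1$, so that after isotopy $B$ lies entirely in $\overline{P_4}$, contradicting the earlier proposition that no black loop lies inside a single $P_i$. The main obstacle will be the explicit homology computation of $[W_1], [W_2], [W_3]$; fortunately, the strong $\phi$- and $\psi$-symmetry of the chosen $W_i$ makes this little more than sign tracking.
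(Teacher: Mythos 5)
Your argument is correct, but it takes a genuinely different route from the paper's. The paper stays purely topological: if $B_1$ had a single strand, say in $P_1$, its complementary arc in the planar piece $P_4$ would be essential there and hence would separate $P_2$ from $P_3$ inside $P_4$; therefore $B_1$ and $\phi(B_1)$ cross, and cross only in $P_4$, and cutting and pasting these two loops (both of which lift to loops) produces a simple homotopically non-trivial loop in $P_4$ that lifts to a loop --- impossible, since every such loop is parallel to a red loop. Your route replaces this by a computation in $H_1(S_0)$, using that the image of the defining normal subgroup in homology is $\langle[W_1],[W_2],[W_3]\rangle$ and that this subgroup meets the image of $H_1(P_i)$ trivially. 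Both work; the paper's is shorter and uses the order-$3$ symmetry directly, whereas yours is more systematic and actually ends sooner than you say: once $[B]=[\tilde\alpha]=0$ you are done, because a set of defining loops for a Schottky group is homologically independent, so $[B]\ne 0$ --- the final isotopy of $\alpha$ into $R_1$ is unnecessary. Two small corrections. First, $\psi$ is orientation-reversing and fixes each $[G_j]$, so it sends $\mu_j$ to $-\mu_j$ only modulo $\langle[G_j]\rangle$; hence $\psi$-invariance of $W_2$ does not by itself kill the $[G_j]$-components of $[W_2]$. This does no harm: the $\mu$-coefficients of $[W_2]$ are already $(\pm1,\pm1,0)$ from the crossing numbers $W_2\bullet G_j$ alone, and in the linear-algebra step you only need the $\mu_2$- and $\mu_3$-coefficients to force $b=c=0$ and then the nonvanishing of the $[G_2]$-, $[G_3]$-components of $[W_1]$ (which follows from $\phi$-invariance plus $[W_1]\ne0$) to force $a=0$; the unknown $[G_j]$-components of $[W_2]$ and $[W_3]$ never enter. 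Second, the single strand could lie in any of $P_1,P_2,P_3$, so the intersection computation must be checked for each torus (or one appeals to $\phi$-equivariance); the same three-line check goes through in each case.
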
 
\begin{proof} We know from the above that every black loop contains at least 
one strand. Suppose we had that say $B_1$ crosses only $R_1$, at exactly two 
points. Then $B_1$ consists of exactly a strand in $P_1$, and an arc in the 
planar region. Since the arc in $P_4$ is homotopically non-trivial and not 
parallel to the boundary of $P_4$, it separates $P_2$ from $P_3$. It follows 
that $\phi(B_1)$ crosses $B_1$ in $P_4$, and only in $P_4$. Appropriately 
cutting and pasting these two loops, we obtain a new simple homotopically 
non-trivial loop that also lifts to a loop, and that lies entirely in $P_4$, 
which cannot be.
\end{proof}

\begin{Prop} 
Every torus contains at least two strands. 
\end{Prop}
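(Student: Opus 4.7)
The plan is to argue by contradiction, reducing the geometric claim to a rank computation in $H_1(S;{\mathbb Z})$. Write $a_i, b_i \in H_1(S;{\mathbb Z})$ for the homology classes of the standard basis elements $A_i, B_i$ of the previous section. Suppose, without loss of generality, that $P_1$ contains exactly one strand $\alpha$; there is at least one such strand, since otherwise $G_1 \subset P_1$ would be disjoint from every black loop and hence would lift to a loop. The strand $\alpha$ belongs to a unique black loop, say $B_1$; then $B_2$ and $B_3$ do not cross $R_1$ at all, and since neither is entirely contained in any torus (by an earlier proposition), both must lie entirely in the genus-$2$ subsurface-with-boundary $Q = S\setminus P_1$.

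Because $R_1$ is essential in $S$, the subsurface $Q$ is incompressible, so $H_1(Q;{\mathbb Z}) \hookrightarrow H_1(S;{\mathbb Z})$ is injective with image the rank-$4$ sublattice $M = \langle a_2, b_2, a_3, b_3\rangle$; hence $[B_2],[B_3] \in M$. On the other hand, a set of defining loops for a Schottky handlebody has homology classes forming a ${\mathbb Z}$-basis of the Lagrangian subgroup $L \subset H_1(S;{\mathbb Z})$ killed by pushforward into the handlebody, and for any Schottky group in the relative conical neighborhood of $\widehat G_0$ this Lagrangian coincides with the one generated by $[W_1], [W_2], [W_3]$. From the free-homotopy formulas of the previous section we compute $[W_1] = -(b_1+b_2+b_3)$, $[W_2] = a_2-a_1$, $[W_3] = a_1-a_3$, so $L = \langle b_1+b_2+b_3,\ a_1-a_2,\ a_1-a_3\rangle$. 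A direct calculation (requiring the $a_1$- and $b_1$-components of a generic element of $L$ to vanish) forces $L \cap M = {\mathbb Z}\cdot(a_3-a_2)$, of rank $1$. But $[B_2]$ and $[B_3]$ are ${\mathbb Z}$-linearly independent elements of $L$, so they cannot both lie in this rank-$1$ subgroup, a contradiction. The analogous arguments for $P_2$ and $P_3$ go through verbatim after cyclic relabeling, the relevant rank-$1$ intersections being generated by $a_1-a_3$ and $a_1-a_2$ respectively.

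The main obstacle I anticipate is not the computation itself but the bookkeeping needed to confirm that the Lagrangian relevant to the Schottky group carrying the black loops coincides with the one spanned by $[W_1], [W_2], [W_3]$. This reduces to the fact that Schottky groups within a single relative conical neighborhood of $\widehat G_0$ all determine the same handlebody (hence the same cut system up to isotopy), so that the Lagrangian is an invariant of the family.
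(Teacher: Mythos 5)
Your argument is correct, but it takes a genuinely different route from the paper's. The paper works in $\pi_1$: if $P_1$ contained only one strand, one could find a simple homotopically non-trivial loop in the one-holed torus $P_1$ disjoint from that strand, hence disjoint from every black loop; such a loop lifts to a loop in $\Omega(G_0)$, and its commutator with a suitable conjugate loop --- which is freely homotopic to the boundary $R_1$ --- would then also lift to a loop, contradicting the fact that $R_1$ is pinched. You instead observe that a lone strand forces two of the three black loops entirely into the complementary genus-$2$ subsurface $Q$, and derive the contradiction from the rank-$1$ intersection of the Lagrangian $L=\langle b_1+b_2+b_3,\ a_1-a_2,\ a_1-a_3\rangle$ with $H_1(Q)=\langle a_2,b_2,a_3,b_3\rangle$. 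I checked the computation; the intersections ${\mathbb Z}(a_2-a_3)$, ${\mathbb Z}(a_1-a_3)$, ${\mathbb Z}(a_1-a_2)$ for the three tori are as you state, and the needed inputs --- that $[B_2],[B_3]$ are homologically independent and lie in $L$ --- both hold, the first because defining loops are always homologically independent, the second because the $B_i$ lie in the normal closure $N$ of $w_1,w_2,w_3$, whose image in $H_1(S)$ is exactly $\langle [W_1],[W_2],[W_3]\rangle$. The worry you flag at the end is in fact a non-issue: in the paper's setup the black loops are, by definition, a set of defining loops for the fixed Schottky covering $\Omega(G_0)\to S_0$ determined by $W_1,W_2,W_3$, so the Lagrangian is pinned down from the outset and no comparison across the relative conical neighborhood is required. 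The trade-off between the two proofs: the paper's is local and needs only that $R_1$ does not lift to a loop, so it transfers verbatim to any separating curve bounding a one-holed torus; yours depends on the explicit homology classes of the $W_i$, but in exchange avoids the slightly delicate construction of an essential loop disjoint from the strand and of the commutator representing $R_1$.
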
 
\begin{proof} 
Since each $G_i$ is crossed by at least one black loop, each torus contains at 
least one strand.
If say $P_1$ were to contain only one strand, then we could find a simple 
homotopically non-trivial loop in $P_1$ that does not cross that strand, and 
hence does not cross any black loop. Such a loop necessarily lifts to a loop, 
and its commutator with a conjugate loop also lifts to a loop. But this 
commutator is freely homotopic to $R_1$, which does not lift to a loop. 
\end{proof}

\begin{Prop} If $A_1\ne A_2$ are distinct strands in the torus 
$P_i$, where $A_1\bullet G_i\ne A_2\bullet G_i$, then $A_1$ and $A_2$ cross on 
$R_i$. 
\end{Prop}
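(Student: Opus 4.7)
The plan is to prove the contrapositive: if the strands $A_1, A_2$ do not cross on $R_i$, then $A_1 \bullet G_i = A_2 \bullet G_i$. The key idea is to close the strands up into disjoint simple closed curves on the closed torus obtained from $P_i$ by capping $R_i$, and then to invoke the standard fact that two disjoint essential simple closed curves on a torus are freely homotopic.

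First I would pass to geodesic (minimal-position) representatives, so that each strand is an essential arc in $P_i$. The non-crossing hypothesis says the four endpoints of $A_1, A_2$ on $R_i$ do not interleave in the cyclic order, so I can choose disjoint subarcs $\gamma_1, \gamma_2 \subset R_i$ with $\partial \gamma_j$ equal to the endpoints of $A_j$. Since $A_1$ and $A_2$ are disjoint in the interior of $P_i$ (the black loops being simple and pairwise disjoint on $S$), the closed curves $\hat A_j = A_j \cup \gamma_j$ are simple and mutually disjoint; and since each $\gamma_j$ lies on $R_i$ and hence misses $G_i$, one has $\hat A_j \bullet G_i = A_j \bullet G_i$.

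Viewing $\hat A_1, \hat A_2$ inside the closed torus $T$ obtained from $P_i$ by capping $R_i$, I would next check that each $\hat A_j$ is essential on $T$: otherwise $\hat A_j$ bounds a disk in $T$, which by a standard planarity argument forces $\hat A_j$ to be parallel to $R_i$ in $P_i$, and hence forces $A_j$ to be parallel rel endpoints to the complementary arc of $R_i$, contradicting essentiality. Now $\hat A_1, \hat A_2$ are two disjoint essential simple closed curves on the torus $T$; cutting $T$ along $\hat A_1$ gives a cylinder in which the essential disjoint curve $\hat A_2$ must be isotopic to a core, so $\hat A_1$ and $\hat A_2$ are freely homotopic on $T$. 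Freely homotopic simple closed curves have equal geometric intersection number with any fixed loop, giving $A_1 \bullet G_i = A_2 \bullet G_i$, which contradicts the hypothesis.

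The main subtlety I foresee is the verification that the closed-up curves $\hat A_j$ remain essential on $T$ after passing to minimal-position representatives, together with the bookkeeping needed to reduce any inessential-strand cases (where the intersection number drops automatically to zero) back to the essential setting; once these points are handled, the argument is a direct application of the classification of disjoint simple closed curves on a torus.
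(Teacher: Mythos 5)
Your proof is correct and follows essentially the same route as the paper's: both close up $P_i$ into a torus (the paper collapses $R_i$ to a point, you cap it and join endpoints by disjoint boundary arcs) and invoke the fact that non-homotopic essential simple closed curves on a torus must intersect, so equal intersection numbers with $G_i$ are forced when the strands do not cross on $R_i$. Your version is merely the contrapositive, spelled out with the minimal-position and essentiality checks that the paper leaves implicit.
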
 
\begin{proof} This is immediate from the fact that, on the 
closed torus, $A_1$ and $A_2$, are homotopically non-trivial and distinct. 
\end{proof}

\begin{Prop} If there are exactly two strands, $A_1$ and $A_2$, in 
some $P_i$, then $A_1\bullet G_i\ne A_2\bullet G_i$. Further, either both $A_1$ 
and $A_2$ cross $G_i$, or, after appropriate labeling, $A_1\bullet G_i=1$ and 
$A_2\bullet G_i=0$. 
\end{Prop}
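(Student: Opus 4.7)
The plan is to prove the statement in three steps, excluding in turn the three configurations inconsistent with the conclusion: (i) both strands disjoint from $G_i$, (ii) exactly one strand disjoint from $G_i$ while the other crosses it more than once, and (iii) both strands crossing $G_i$ with the same positive multiplicity.

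First, if $A_1\cap G_i = A_2\cap G_i = \emptyset$, then since $G_i$ lies entirely in $P_i$ and the only intersections of the black loops with $P_i$ occur along $A_1$ and $A_2$, the loop $G_i$ is disjoint from every black loop on $S$; hence $G_i$ lifts to a simple closed curve in $\Omega(G_0)$, contradicting the pinchability of $G_i$. Next, to handle (ii), assume $A_2\cap G_i = \emptyset$ and $A_1\bullet G_i = k\ge 1$. Attach a disk $D$ to $P_i$ along $R_i$ to obtain the closed torus $T$, and close each strand $A_j$ by an arc $\delta_j\subset R_i$ to obtain simple closed curves $\tilde A_j\subset T$. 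Since essential strands cannot be isotoped away, $\tilde A_2$ is a simple essential closed curve on $T$ disjoint from $G_i$, and so $[\tilde A_2]=\pm[G_i]$ in $H_1(T)=\mathbb{Z}^2$. If the four endpoints of $A_1, A_2$ on $R_i$ are non-interleaved, choose $\delta_1,\delta_2$ disjoint, making $\tilde A_1$ and $\tilde A_2$ disjoint on $T$; two disjoint simple essential curves on a torus are parallel, forcing $[\tilde A_1]=\pm[G_i]$ and giving the contradiction $A_1\bullet G_i=0$. Hence the endpoints are interleaved, the closing arcs must cross exactly once inside $D$, so $[\tilde A_1]\cdot[\tilde A_2]=\pm 1$; combined with $[\tilde A_2]=\pm[G_i]$ this gives $A_1\bullet G_i = |[\tilde A_1]\cdot[G_i]| = 1$.

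Finally, to exclude (iii), assume $A_1\bullet G_i = A_2\bullet G_i = k\ge 1$. In the interleaved subcase, $\{[\tilde A_1],[\tilde A_2]\}$ is a symplectic basis of $H_1(T)$; expressing $[G_i]=\alpha[\tilde A_1]+\beta[\tilde A_2]$ and computing intersections gives $|\alpha|=|\beta|=k$, and primitivity of $[G_i]$ forces $k=1$. In the non-interleaved subcase, $[\tilde A_1]=\pm[\tilde A_2]$, and the closed-up strands together with arcs on $R_i$ form a simple null-homologous loop $\gamma\subset P_i$. Since $P_i$ is a torus with one hole, $\gamma$ bounds either a disk on one side (so that $A_1$ and $A_2$ are parallel copies of the same essential arc) or an annulus with $R_i$ on the other side (in which case the strands lie inside a collar of $R_i$ and are boundary-parallel, contradicting essentiality). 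The two remaining configurations are ruled out by a cut-and-paste surgery: collapsing $R_i$ to a point in the appropriate subsurface picture, the strands $A_1$ and $A_2$ either cross (interleaved $k=1$ case) or duplicate (parallel case), and resolving the crossing or swapping across the bounded disk produces a modified system of defining loops with strictly fewer strands in $P_i$, contradicting the established lower bound of two strands per torus. The main obstacle is this last step: one must carry out the cut-and-paste so that the new system remains simple, disjoint, and lifts to loops in the planar Schottky cover, ensuring that the resulting strand-count reduction is a genuine contradiction rather than an artifact of leaving the category of valid defining systems.
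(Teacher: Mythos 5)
Your treatment of cases (i) and (ii) is correct and in fact supplies details the paper leaves implicit: some strand must meet $G_i$ because $G_i$ does not lift to a loop, and if $A_2\bullet G_i=0$ then on the closed-up torus $[\tilde A_2]=\pm[G_i]$, the endpoints must interleave, and $A_1\bullet G_i=|[\tilde A_1]\cdot[G_i]|=1$. The primitivity computation forcing $k=1$ in the interleaved subcase of (iii) is also a sound observation.

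The problem is case (iii), and it is exactly the step you flag as the ``main obstacle'': the surgery argument does not close, and it is also the wrong tool. This proposition is proved under the standing reductio hypothesis announced at the start of the subsection --- that no two arcs of black loops are parallel between three successive crossings of red and green loops (i.e., that $\widehat G_0$ is not sufficiently complicated) --- and that hypothesis is what kills case (iii) in one line. If $A_1\bullet G_i=A_2\bullet G_i=k>0$ and the strands do not cross on $R_i$, they are parallel arcs running through $R_i$, then $G_i$ ($k$ times), then $R_i$: precisely the forbidden configuration. If they do cross on $R_i$ (so $k=1$ by your computation), the two sub-arcs on either side of the single $G_i$-crossing are still pairwise parallel in $P_i\setminus G_i$ (each pair cobounds a disk), so the two strands are again parallel through the three successive crossings $R_i$, $G_i$, $R_i$ and then diverge. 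Your replacement --- resolving the crossing, or collapsing the parallel pair, to produce a system with fewer strands in $P_i$ --- alters the black loops, and nothing guarantees the result is again three simple, disjoint, homologically independent loops that lift to loops in $\Omega(G_0)$; the surgery can merge or split loops and change which elements of $\pi_1(S_0)$ die in the cover, so ``fewer than two strands in $P_i$'' contradicts nothing. Without the standing hypothesis the configuration you are trying to exclude (e.g.\ two parallel strands each crossing $G_i$ twice) is not obviously impossible --- it is merely sufficiently complicated, which is what the reductio forbids --- so no hypothesis-free argument of the kind you sketch can succeed; you need to invoke the standing assumption directly.
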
 
\begin{proof} Since $G_i$ does not lift to a 
loop, one of the strands in $P_i$ must cross $G_i$; we can assume that it is 
$A_1$. If $A_2$ also crosses $G_i$, then we cannot have that $A_1\bullet 
G_i=A_2\bullet G_i$, for then, after perhaps reorienting one of them, we would 
have that $A_1$ and $A_2$ are parallel through a red loop, at least one green 
loop, and then a red loop. One also easily sees that if $A_1\bullet G_i>1$, 
then $A_2\bullet G_i\ne 0$. 
\end{proof}

\begin{Prop} Let $A$ be a strand in the 
torus $P_i$. Then $A\bullet G_i\le 3$. 
\end{Prop}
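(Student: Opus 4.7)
My plan is to argue by contradiction. Suppose the strand $A$ meets $G_i$ at $k\ge 4$ points $x_1,\ldots,x_k$, listed in the order along $A$. Cut the torus-with-hole $P_i$ open along the non-separating simple closed curve $G_i$ to obtain a pair of pants $P_i'$ with three boundary components $R_i$, $G_i^+$, $G_i^-$. The strand $A$ then becomes a family of $k+1\ge 5$ pairwise disjoint, simple, properly embedded arcs in $P_i'$: two outer arcs $\alpha_0,\alpha_k$ with exactly one endpoint on $R_i$, and $k-1\ge 3$ inner arcs $\alpha_1,\ldots,\alpha_{k-1}$ whose endpoints lie on $G_i^+\cup G_i^-$. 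I would encode the combinatorics by the sign sequence $\epsilon_1,\ldots,\epsilon_k\in\{+,-\}$, where $\epsilon_j$ is the side of $G_i$ on which $A$ exits its $j$-th crossing. The isotopy type of the inner arc $\alpha_j$ in $P_i'$ rel boundary is then determined by the pair $(\epsilon_j,\epsilon_{j+1})$: it is of cross type ($G_i^+\to G_i^-$) when $\epsilon_j=\epsilon_{j+1}$, and of same-side type on $G_i^{\epsilon_j}$ when $\epsilon_j=-\epsilon_{j+1}$. Modulo isotopy rel boundary each of these three rough types is represented by a single essential class in the pair of pants.

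The heart of the proof is then a case analysis on the sign sequence. Whenever two of the inner arcs fall into the same isotopy type, they are parallel in $P_i'$ and hence cobound a rectangular strip whose remaining two sides are sub-arcs of $G_i^+$ and $G_i^-$; re-gluing $G_i^+$ to $G_i^-$, this strip sits inside $P_i$ and exhibits two disjoint sub-arcs of the black loop carrying $A$ that are parallel between successive crossings of $G_i$. Extending each of these parallel sub-arcs by the uniquely determined adjacent arc at each end produces two arcs of the black loop that are parallel between three successive crossings of red and green loops, contradicting our standing assumption. In the remaining ``mixed'' sign patterns the inner arcs consist of exactly one arc of each of the three types (this can only occur when $k=4$). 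For these, I would pass to the disk-with-two-holes model of $P_i'$: an essential same-side arc on $G_i^{\pm}$ necessarily separates $P_i'$ into two components, each containing exactly one of $R_i$ and $G_i^{\mp}$; going through each mixed sign pattern, this separation forces any outer arc from $R_i$ to its prescribed endpoint on $G_i^{\pm}$ to cross the corresponding same-side inner arc, so that the five arcs $\alpha_0,\ldots,\alpha_4$ cannot be simultaneously disjoint. Such patterns therefore cannot be realized by any simple arc $A$.

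The main obstacle is the borderline case $k=4$ with a mixed sign pattern, where a pigeonhole on isotopy types among the three inner arcs fails directly; the standing assumption does not apply to sub-arcs of a single strand unless we can exhibit two parallel sub-arcs spanning \emph{three} (not just two) successive red and green crossings. I plan to dispose of it by the disjointness obstruction just described, using the uniqueness of essential same-side isotopy classes in the pair of pants to show that the prescribed endpoint data for the outer arcs is incompatible with disjointness from the inner arcs. Once all mixed patterns are ruled out as unrealizable, every sign sequence of length $k\ge 4$ is forced into the parallel-inner-arc case, and the standing assumption closes the contradiction.
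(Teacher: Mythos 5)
Your reduction to the pair of pants $P_i'$, the classification of the inner arcs into the three essential types, and the pigeonhole are sound, and your disposal of the ``mixed'' pattern is correct --- in fact that case is vacuous for a simpler reason: an essential arc with both endpoints on $G_i^+$ separates $R_i$ from $G_i^-$, while one with both endpoints on $G_i^-$ separates $R_i$ from $G_i^+$, so two such arcs can never be disjoint; hence at most two of the three types occur among the inner arcs and, with $k-1\ge 3$ of them, two must share a type. Up to this point you are on the same route as the paper (whose own proof is a single sentence).

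The genuine gap is the step ``extending each of these parallel sub-arcs by the uniquely determined adjacent arc at each end produces two arcs parallel between three successive crossings.'' Parallelism between two successive crossings of $G_i$ does not automatically propagate through a third. The two parallel inner arcs cobound a strip $Q$; at an end of $Q$ the two continuations emanate from the opposite side of a sub-arc $[x,x']\subset G_i$, and the corresponding lifts run to the same next parabolic fixed point only if the continuations again cobound a strip attached along $[x,x']$ on that side. This fails whenever one continuation is an outer arc (to $R_i$) and the other is inner, and also whenever the region between the two continuations adjacent to $[x,x']$ contains the boundary $R_i$. Concretely, if $A$ is a sub-arc of a $(4,1)$-curve, the parallel pair $\alpha_1,\alpha_2$ extends at the $x_1$-end to $\alpha_0,\alpha_1$, which are not parallel; one must instead find some adjacent pair of crossing points on $G_i$ whose emanating arcs are parallel with correctly attached strips on \emph{both} sides. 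That such a pair exists needs a global argument: each side of $G_i$ fails for at most two adjacent pairs (those abutting the region containing $R_i$, whose location is pinned down by the two outer arcs), and when $k=4$ one must further check that these failure sets overlap, using the compatibility of the cyclic order of $x_1,\dots,x_k$ along $G_i$ with the parallelism of the inner arcs. None of this appears in your write-up, and it is where the content of the proposition lies. A secondary point: when the two parallel inner arcs are consecutive along $A$, the extended sub-arcs overlap, so the conclusion must be phrased as two distinct \emph{lifts} sharing three successive parabolic fixed points, as in the paper, not as two disjoint sub-arcs on the surface.
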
 
\begin{proof} If $A\bullet 
G_i\ge 4$, then there would be two distinct lifts of $A$, starting at some lift 
of $G_i$, that would be parallel between two successive lifts of $G_i$, followed 
by a lift of $R_i$, contradicting our basic assumption that $\widehat G_0$ is 
not sufficiently complicated. 
\end{proof}

\begin{Prop} There are at most three strands inside any torus. Further, if there 
are three strands in some $P_i$, then two of them are parallel without crossing 
$G_i$, and the third crosses $G_i$ exactly once. 
\end{Prop}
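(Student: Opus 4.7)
The plan is to classify each strand $A$ in $P_i$ by its primitive homology class $(a,b)\in\mathbb{Z}^2$ (up to sign) on the closed torus $\bar P_i$ obtained from $P_i$ by collapsing $R_i$ to a basepoint. By the preceding proposition, $|a|=A\bullet G_i\in\{0,1,2,3\}$, and two strands are said to be \emph{parallel} if their classes agree up to sign.

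The first step is to exclude parallel strands with $|a|\ge 1$. Two such strands lift in the $P_i$-component $\Delta_i\subset\widehat\Omega^+(\widehat G_0)$ to disjoint arcs sharing a common sequence of $|a|+2\ge 3$ consecutive parabolic fixed points: an entry $R_i$-node lift $r$, the $|a|$ successive $G_i$-node lifts along the way, and an exit $R_i$-node lift $r'$. Since the two lifts must eventually diverge, three consecutive shared fixed points, traversed across two components with divergence in a third, satisfy the definition of sufficient complication, contradicting the standing hypothesis. Hence parallel strands occur only in the class $C_0=(0,\pm 1)$.

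Two non-parallel strands are disjoint in $P_i$ and simple on $\bar P_i$, meeting only at the collapsed basepoint; their geometric intersection number on $\bar P_i$ is exactly $1$, so their classes are Farey-adjacent: $|a_1 b_2 - a_2 b_1|=1$. A mutually Farey-adjacent collection of primitive classes in $\mathbb{Z}^2$ has at most three elements, and Farey-adjacency to $C_0$ forces $|a|=1$ for any other class. The strand count thus decomposes as $k=m_0+m_+$, with $m_0$ the multiplicity of $C_0$ and $m_+\le 2$ the number of Farey-adjacent non-$C_0$ classes, each contributing a single strand of class $|a|=1$.

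To conclude $k\le 3$ with the stated structure in the equality case, I would rule out the remaining configurations (namely $m_0\ge 3$, $(m_0,m_+)=(2,2)$, and the three-non-parallel configurations $\{|a|\}=\{1,1,2\}$ or $\{1,2,3\}$) by tracking the black-loop continuations across the $R_i$-node into the $P_4$-component $\Delta_4$. Since all $C_0$-strands share entry and exit $R_i$-node lifts $r,r'$ on $\partial\Delta_i$, their continuations emanate from a common $r''\in\partial\Delta_4$; a third such lift, or an additional Farey-adjacent strand's continuation forced through $r''$, must then, by the bound of at most four superstrand lifts at any parabolic fixed point of a thrice-punctured sphere component, share the next parabolic fixed point with one of the existing lifts. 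This yields three consecutive shared nodes across $\Delta_i$ and $\Delta_4$ and hence sufficient complication. The structural claim that in the $k=3$ case the non-$C_0$ strand has $|a|=1$ follows immediately from Farey-adjacency to $C_0$. I expect the main obstacle to be the combinatorial enumeration of superstrand directions at $r''$ in $\Delta_4$, which requires Proposition \ref{prop:distance} and a careful case-by-case use of the planar structure of $\bar P_4$.
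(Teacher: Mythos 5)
Your homology-class/Farey set-up is sound and is a genuinely cleaner organization than the paper's (which simply orders the strands by $A_j\bullet G_i$ and runs a case analysis): the exclusion of parallel strands with $|a|\ge 1$ is exactly the paper's use of sufficient complication, and the bound of three mutually Farey-adjacent primitive classes correctly packages the intersection-number constraints. But the proof is not complete, and the plan for finishing it has two concrete problems. First, your enumeration of the configurations still to be excluded omits $(m_0,m_+)=(1,2)$, i.e.\ classes $(0,\pm1),(1,b),(1,b\pm1)$: three mutually non-parallel strands of which only one misses $G_i$. Your framework permits this (it is untouched by the ``no parallel strands with $|a|\ge1$'' step, since the two $|a|=1$ strands are not parallel), yet it is a three-strand configuration that directly contradicts the ``Further'' clause, so it must be ruled out; it is in fact the delicate case, since the paper disposes of it by asserting that two strands meeting $G_i$ the same positive number of times are parallel, which your own Farey computation shows is automatic only for crossing number $\ge 2$.

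Second, the mechanism you propose for the eliminations --- pigeonholing on ``at most four superstrand lifts at any parabolic fixed point'' of $\Delta_4$ to force a third continuation to share its next parabolic fixed point with an existing one --- does not give what you need. That bound is modulo the stabilizer of the fixed point, and the paper explicitly notes that the (at most four) superstrand lifts emanating from a given fixed point end at \emph{distinct} parabolic fixed points; two continuations from $r''$ share their next node only if they are lifts of parallel arcs in $P_4$ \emph{and} have zero relative twist about $r''$, which is not a counting statement. The paper's actual argument here is different and more combinatorial: it orients the parallel $(0,\pm1)$ strands and shows that any two of them must come from distinct red loops other than $R_i$ and go to distinct red loops, which caps their number at two. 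You would need either to reproduce that argument or to supply the missing no-twist lemma before the superstrand count can be used.
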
 
\begin{proof} 
Suppose there are $K>1$ strands, $A_1,\ldots, A_K$, in $P_i$, where, for 
$j=1,\ldots,K-1$, $A_j\bullet G_i\ge A_{j+1}\bullet G_i$. We have already seen 
that $A_1\bullet G_i>0$. We cannot have two parallel strands crossing $G_i$ the 
same positive number of times, hence $A_j\bullet G_i> A_{j+1}\bullet G_i$, 
unless $A_j\bullet G_i=0$. In particular, $A_2\bullet G_i<A_1\bullet G_i$.
We first assume that $A_2\bullet G_i>0$, then $A_1\bullet G_i>1$, from which it 
follows that every strand in $P_i$ crosses $G_i$.
Continuing with the assumption that $A_2\bullet G_i>0$, we next observe that we 
cannot have that $A_1\bullet G_i=3$, and $A_2\bullet G_i=2$, for lifts of $A_1$ 
and $A_2$ would then be parallel through two successive lifts of $G_i$, followed 
by a lift of $R_i$. We conclude that if $A_2\bullet G_i>0$, then $K=2$, 
$A_2\bullet G_i=1$, and either $A_1\bullet G_i=3$, or $A_1\bullet G_i=2$.
We next take up the case that $A_2\bullet G_i=0$. Then, since the lift of $G_i$ 
must be primitive, $A_1\bullet G_i=1$. If $K\ge 3$, then the strands 
$A_2,\dots,A_K$ all do not cross $G_i$, and are all parallel. Since any pair of 
these parallel strands must come from distinct red loops, other than $R_i$, and 
must go to distinct red loops, there can be at most two of these strands. We 
have shown that $K\le 3$. 
\end{proof}

\begin{Prop} At least one of the $P_i$ has 
three strands in it. 
\end{Prop}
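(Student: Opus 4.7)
I would argue by contradiction, assuming every torus $P_i$ contains exactly two strands (the previous proposition allows only two or three). Since the total strand count is $6$, and since each black loop $B_j$ contains at least two strands by the proposition above, each $B_j$ then contains exactly two. The plan is to split on how the two strands of each $B_j$ are distributed across the tori and in each case derive a contradiction from Corollary \ref{Cor:xprod}, i.e.\ from $\sum_{i=1}^{3} B_j\times G_i=0$. Throughout, I would work with geodesic representatives so that each strand is in minimal position with $G_i$ inside its torus, and hence its algebraic intersection with $G_i$ equals $\pm$ its geometric one; tracking these signs is the most delicate part.

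In the first case some $B_j$ has both of its strands inside a single torus $P_i$. The admissible $2$-strand configurations from the preceding proposition force the pair of geometric intersection numbers with $G_i$ to be $(1,0)$, $(2,1)$, or $(3,1)$, so $B_j\times G_i=\pm a_1\pm a_2$ is manifestly nonzero. Since $B_j$ has no strand in the other two tori, the remaining summands vanish, giving $\sum_k B_j\times G_k\ne 0$ and contradicting Corollary \ref{Cor:xprod}.

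In the remaining case every $B_j$ has its two strands in two distinct tori, so the bipartite incidence graph between $\{B_1,B_2,B_3\}$ and $\{P_1,P_2,P_3\}$ is $2$-regular without multi-edges, hence a single $6$-cycle; after relabeling I may take $B_1$ to visit $P_1,P_2$, $B_2$ to visit $P_2,P_3$, and $B_3$ to visit $P_3,P_1$. Writing $a_{j,i}$ for the signed intersection of the $B_j$-strand in $P_i$ with $G_i$, the vanishing of $\sum_k B_j\times G_k$ forces the two visited absolute values to agree, so I set $p=|a_{1,1}|=|a_{1,2}|$, $q=|a_{2,2}|=|a_{2,3}|$, $r=|a_{3,3}|=|a_{3,1}|$. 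The configurations in $P_1,P_2,P_3$ are then the multisets $\{p,r\},\{p,q\},\{q,r\}$, and each must lie in $\{\{0,1\},\{1,2\},\{1,3\}\}$. A short combinatorial check finishes: every admissible pair contains $1$ and has two distinct entries, so ordering $p\le q\le r$, either $p=0$ forces $q=r=1$ and hence the forbidden pair $\{1,1\}$ for $\{q,r\}$, or $p\ge 1$ forces $p=1$ and $q\ge 2$, making $\{q,r\}$ a pair both of whose entries differ from $1$. Either way we contradict the admissibility list, closing the second case.
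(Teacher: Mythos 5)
Your combinatorial skeleton is sound and the route is genuinely different from the paper's. The paper also reduces to the situation where every torus and every black loop carries exactly two strands, but then finishes topologically: the two strands in $P_1$ have distinct intersection numbers with $G_1$, hence cross on $R_1$, so their endpoints interleave there; since one of them continues (at both ends) into $P_2$ and the other into $P_3$, the connecting arcs in the planar region $P_4$ would have to intersect, contradicting the disjointness and simplicity of the black loops. Your bookkeeping via Corollary \ref{Cor:xprod} replaces that planarity argument, and it has a real merit: it explicitly handles the case in which some black loop has both of its strands in a single torus, a configuration the paper's proof silently assumes away when it asserts the $P_1$--$P_2$ and $P_1$--$P_3$ pairing (your Case 2 is exactly the configuration the paper treats). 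The $6$-cycle reduction and the final multiset check against the admissible configurations $\{0,1\}$, $\{1,2\}$, $\{1,3\}$ are both correct, given the sign claim discussed next.

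The one step that is not justified as stated is the assertion that minimal position forces a strand's algebraic intersection with $G_i$ to be $\pm$ its geometric intersection number. Minimal position alone never yields this: two simple closed curves on a surface of genus at least $2$ can be in minimal position with geometric intersection $2$ and algebraic intersection $0$, and the same failure occurs for arcs against curves in general. Without sign coherence your Case 1 survives only by parity for the configurations $(1,0)$ and $(2,1)$ but fails for $(3,1)$ (signs $+,+,-$ against $-$ would give algebraic sum $0$), and in Case 2 the admissible list of absolute values would have to be enlarged to include $\{1,1\}$, which destroys the contradiction (take $p=q=r=1$). The claim you need is nevertheless true, for a reason special to this configuration: a strand is an embedded arc in the one-holed torus $P_i$, and cutting $P_i$ along the non-separating curve $G_i$ yields a pair of pants with boundary components $R_i$, $G_i^+$, $G_i^-$. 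If two consecutive crossings had opposite signs, the first such pair would bound a sub-arc running from one copy of $G_i$ back to that same copy; that sub-arc is either boundary-parallel, producing a bigon between the strand and $G_i$ (impossible for geodesics), or essential, in which case it separates $R_i$ from the other copy of $G_i$ and therefore must be crossed by the initial sub-arc of the strand, which joins $R_i$ to that other copy --- contradicting the fact that these are disjoint pieces of a single embedded arc. Hence all crossings of a given strand with $G_i$ carry the same sign. With that lemma supplied, your proof is complete.
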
 
\begin{proof} If not, then all three of the tori 
would contain exactly two strands. Since each black loop must contain at least 
two strands, it follows that each of the black loops contains exactly two 
strands. Then each of the two ends of one of the strands from $P_1$ connect to 
each of the two ends of one of the strands of $P_2$, and each of the two ends of 
the other strand in $P_1$ connects to each of the two ends of one of the strands 
of $P_3$. Since the endpoints of the first strand separate the endpoints of the 
second strand on $R_i$, these two black loops must intersect in $P_4$, which 
cannot be. 
\end{proof}

We assume, from here on, that there are three strands, $A_1$, $A_2$, $A_3$, of 
black loops in $P_1$, where $A_1\bullet G_1=1$, and $A_2$ and $A_3$ are 
parallel with $A_2\bullet G_1=A_3\bullet G_1=0$.

\begin{Prop} Of the six endpoints of strands on $R_1$, three connect to $R_2$ 
and three connect to $R_3$. Also, one end of $A_1$ connects with $R_2$ and the 
other end connects with $R_3$. 
\end{Prop}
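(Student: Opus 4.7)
The plan is to analyze how the six endpoints of strands on $R_1$ are matched by arcs through the planar region $P_4$, exploiting the parallelism of $A_2$ and $A_3$ in $P_1$ together with the hypothesis that $\widehat{G}_0$ is not sufficiently complicated.

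First I would fix the cyclic order of the six endpoints on $R_1$. In the pair of pants $P_1\setminus G_1$, the disjoint parallel strands $A_2$ and $A_3$ appear as nested essential arcs (both separating the two copies of $G_1$), and the strand $A_1$, which crosses $G_1$ exactly once, splits into two arcs emerging on opposite sides of the $A_2,A_3$-separator. Hence the cyclic order on $R_1$ is $p_1,p_2,p_3,p_3',p_2',p_1'$, where $p_k,p_k'$ denote the endpoints of $A_k$. Lifting to the universal cover of the $P_1$-component of $\widehat{G}_0$, the parallel arcs $\widetilde{A}_2,\widetilde{A}_3$ can be chosen to share a common endpoint $z$ (a specific parabolic fixed point over the $R_1$-node, corresponding to $p_2$ and $p_3$) and a common endpoint $z'$ (corresponding to $p_2'$ and $p_3'$).

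Next, I would apply the non-sufficiently-complicated hypothesis to these parallel lifts. Their continuations past $z$ into the adjacent $P_4$-component cannot both end at the same parabolic fixed point (else $\widetilde{A}_2$ and $\widetilde{A}_3$ would be parallel in both the $P_1$- and the $P_4$-component, yielding a sufficiently complicated configuration); similarly past $z'$. On the surface this translates into the statement that the arcs $\beta_2,\beta_3$ in $P_4$ issuing from $p_2,p_3$ cannot cobound a disk in $P_4$, and likewise for $\beta_2',\beta_3'$. Because the boundary of such a disk would consist of $\beta_2,\beta_3$ together with short arcs of $R_1$ and of a common second red loop, this rules out all the genuinely parallel ways for either pair to land on the same red loop (including both arcs returning to $R_1$). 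The remaining possibilities---non-isotopic arcs to the same $R_i$ enclosing the third hole between them, and arcs to different red loops---must then be sifted using disjointness of all the $P_4$-arcs together with the cyclic order on $R_1$, supplemented by the homological constraint $\sum_i B\times G_i = 0$ of Corollary \ref{Cor:xprod} applied to the black loop containing the $G_1$-crossing strand $A_1$.

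Once each of the pairs $(\beta_2,\beta_3)$ and $(\beta_2',\beta_3')$ is forced to split between $R_2$ and $R_3$, and no arc goes from $R_1$ back to $R_1$, all six endpoints on $R_1$ exit to $R_2\cup R_3$. Since $R_2$ and $R_3$ are disjoint holes in the pair of pants $P_4$, the non-crossing requirement forces the endpoints going to $R_2$ to form a single cyclic interval on $R_1$, and likewise for those going to $R_3$. Combined with the nested arrangement of $A_2,A_3$ and the straddling position of $p_1,p_1'$, the only consistent three-to-three split places $p_1$ and $p_1'$ in different intervals, yielding both conclusions. I expect the main obstacle to be the elimination of the ``winding'' configurations in the second step: the non-sufficiently-complicated hypothesis directly controls only genuinely parallel (no-winding) pairs of arcs, so the winding possibilities must be excluded through careful planar disjointness arguments that account for all six arcs simultaneously.
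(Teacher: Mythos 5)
Your overall strategy is the same as the paper's (exploit the non-sufficiently-complicated hypothesis on the parallel pair $A_2,A_3$, then use planarity of $P_4$ and the position of $A_1$'s endpoints), but there are two concrete problems. First, your cyclic order on $R_1$ is wrong. Since $A_1\bullet G_1=1$ while $A_2\bullet G_1=A_3\bullet G_1=0$, the earlier proposition forces $A_1$ to \emph{cross} both $A_2$ and $A_3$ on $R_1$, i.e.\ $\{p_1,p_1'\}$ must separate $\{p_2,p_2'\}$ and must separate $\{p_3,p_3'\}$; equivalently, the two arcs of $A_1$ in the pair of pants $P_1\setminus G_1$ end on the $R_1$-arcs of the two \emph{outer} complementary regions of $A_2\cup A_3$ (the ones containing the two copies of $G_1$), one of which lies inside the nest and one outside. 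The correct order is $p_1,p_2,p_3,p_1',p_3',p_2'$, not $p_1,p_2,p_3,p_3',p_2',p_1'$; your order puts both ends of $A_1$ on the same side of $A_2\cup A_3$, impossible for a strand crossing $G_1$ once, and it contradicts your own later appeal to the ``straddling position of $p_1,p_1'$''. This is not cosmetic: with your order the concluding step fails, since the cyclic three-to-three split $\{p_2,p_3,p_3'\}$ versus $\{p_2',p_1',p_1\}$ places $p_1$ and $p_1'$ in the same interval. (With the correct order no interval of three consecutive points contains both $p_1$ and $p_1'$, and the argument closes.)

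Second, the decisive middle step is announced but not carried out. You correctly observe that non-sufficient-complication directly kills only the genuinely parallel configurations, and you defer the ``winding'' cases and the arcs returning to $R_1$ to unspecified ``careful planar disjointness arguments''; that is exactly where the content lies, and the paper disposes of it by quoting the earlier analysis of parallel strands (``any pair of parallel strands must come from distinct red loops, other than $R_i$, and must go to distinct red loops''). In fact the winding case you worry about is vacuous: two disjoint arcs in the planar part $P_4$ issuing from adjacent germs at the same parabolic fixed point and landing on the same node cobound, on the sphere, a disk missing the third node, so their lifts from that common fixed point end at the same fixed point and they are parallel after all --- but this has to be said, not hoped for. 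Relatedly, the three-to-three split is asserted rather than derived; you first need the four ends of $A_2\cup A_3$ to split two-and-two between $R_2$ and $R_3$ before the cyclic-interval argument can pin down $A_1$. The paper's own proof is considerably shorter: orient so $A_2,A_3$ are parallel as oriented curves, quote that they come from, and go to, distinct red loops other than $R_1$, and then note that since $A_1$ crosses both on $R_1$ its two ends cannot both go to the same $R_j$.
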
 
\begin{proof} 
We orient $A_1$, $A_2$ and $A_3$ so that $A_2$ and $A_3$ are parallel as 
oriented curves. Then $A_2$ and $A_3$ cannot both come from the same red loop 
other than $R_1$, and they cannot both go to the same red loop other than $R_1$. 
Hence we can assume that $A_2$ comes from $R_2$ and that $A_3$ comes from $R_3$. 
Then either $A_2$ also goes to $R_2$ and $A_3$ also goes to $R_3$, or $A_2$ goes 
to $R_3$ and $A_3$ goes to $R_2$. In either case, since $A_1$ crosses $A_2$ and 
$A_3$ on $R_1$, both endpoints of $A_1$ cannot go to $R_2$, and both endpoints 
of $A_1$ cannot go to $R_3$. Hence one end of $A_1$ connects with $R_2$ and one 
end connects with $R_3$. 
\end{proof}

\begin{Prop} $P_2$ and $P_3$ each contain exactly two strands. 
\end{Prop}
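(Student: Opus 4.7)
\medskip
\noindent\textbf{Proof proposal.}
My plan is to prove $s_2=2$ and $s_3=2$ by contradiction, relying on a parity obstruction that comes from Corollary \ref{Cor:xprod}. Since each torus is already known to contain two or three strands, it suffices to exclude $s_2=3$; the case $s_3=3$ is handled identically by swapping the indices $2$ and $3$.

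First I would observe that the proof of the preceding proposition (the $3$-$3$ split of endpoints on $R_1$) is formally symmetric in the indices $\{1,2,3\}$: it uses only the three-strand structure of $P_1$ and the failure of sufficient complication, neither of which singles out the index $1$. Under the assumption $s_2=3$, rerunning that proof in $P_2$ gives that the six endpoints of strands on $R_2$ split as three connecting via $P_4$-arcs to $R_1$ and three to $R_3$. Counting endpoints on $R_3$ then yields at least $3+3=6$ endpoints, forcing $s_3\geq 3$; by the three-strand upper bound, $s_3=3$. So the assumption $s_2=3$ propagates to $s_1=s_2=s_3=3$, and in particular every $P_4$-arc joins distinct red loops.

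The core of the argument is then a parity count. In each $P_i$ exactly one strand crosses $G_i$, and it does so exactly once; that strand belongs to exactly one black loop. Hence $B_k\bullet G_i\in\{0,1\}$ for all $k,i$, and $\sum_k B_k\bullet G_i=1$ for each $i$, giving the grand total $\sum_{k,i} B_k\bullet G_i=3$. On the other hand, Corollary \ref{Cor:xprod} gives $\sum_i B_k\times G_i=0$ for each $k$; since each signed summand lies in $\{-1,0,+1\}$, the number of nonzero entries must be $0$ or $2$, so $\sum_i B_k\bullet G_i$ is even. Summing over $k$ then forces the grand total to be even, contradicting the value $3$. Therefore $s_2=2$, and by the symmetric argument $s_3=2$.

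I expect the main obstacle to be the first step, namely verifying that the preceding proposition transports cleanly to $P_2$. This should be essentially automatic from the cyclic symmetry of the hypotheses, but requires care with the orientations and the case split on where the two parallel strands connect through $P_4$ that appeared in the original argument. The parity obstruction at the end is the one genuinely new ingredient, and it has the pleasant feature of ruling out the symmetric case $s_1=s_2=s_3=3$ in one blow.
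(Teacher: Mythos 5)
Your argument is correct, and after the shared first step it diverges genuinely from the paper's. Both proofs begin identically: assume three strands in $P_2$, transport the ``three endpoints to each neighboring red loop'' proposition to $P_2$, and count endpoints on $R_3$ to force three strands in $P_3$ as well. From there the paper identifies the three parallel $P_4$-arcs joining $R_1$ to $R_2$, observes that the $G_1$-crossing strand of $P_1$ and the $G_2$-crossing strand of $P_2$ are each the \emph{middle} one of the three, and concludes that a single black loop crosses both $G_1$ and $G_2$ exactly once while the other two black loops cross neither; this makes $G_1$ and $G_2$ conjugate in $G$, contradicting pinchability. Your parity count replaces that combinatorial identification entirely: with all three tori carrying three strands, each $G_i$ is crossed exactly once in total, so $\sum_{k,i}B_k\bullet G_i=3$ is odd, while Corollary \ref{Cor:xprod} forces each $\sum_i B_k\bullet G_i$ (which equals the number of nonzero terms among the $B_k\times G_i\in\{-1,0,1\}$) to be even. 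Your route is cleaner in that it avoids the somewhat delicate ``middle strand'' matching and kills the fully symmetric three-three-three configuration in one stroke; it is also consonant with the paper's own final paragraph, which likewise reaches its ultimate contradiction through Corollary \ref{Cor:xprod}. One small caveat: your parity argument genuinely needs the deduction that $P_3$ has three strands (if $P_3$ had only two, the total number of $G_3$-crossings could be $4$ and the grand total even), so the transport of the preceding proposition to $P_2$ is not optional bookkeeping but load-bearing — exactly as it is in the paper's version.
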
 
\begin{proof} 
Suppose there were three strands in $P_2$, call them $A'_1$, $A'_2$ and $A'_3$, 
labelled as above, so that $A'_1\bullet G_2=1$, and $A'_2\bullet G_2=A'_3\bullet 
G_3=0$, and oriented as above, so that $A'_2$ and $A'_3$ are parallel as 
oriented curves.
Then three of the endpoints of strands in $P_1$ connect with three of the 
strands in $P_3$, and three of the endpoints of strands of $P_2$ connect with 
strands of $P_3$. This implies incidentally that $P_3$ also contains three 
strands. In any case, $A_1$ is the middle strand from $P_1$ connecting to $P_2$, 
and $A'_1$ is the middle strand from $P_2$ connecting to $P_1$. We have shown 
that there is exactly one black loop, call it $B_1$, containing the strands 
$A_1$ and $A'_1$. We now have that $B_1\bullet G_1=B_1\bullet G_2=1$, and that 
$B_2$ and $B_3$ both do not cross either $G_1$ or $G_2$. This implies that $G_1$ 
and $G_2$ define conjugate elements of the Schottky group $G$, which they do 
not. 
\end{proof}

We now observe that we have reached our final contradiction. For we have shown 
that, of the three black loops, one of them crosses $G_1$, and has three 
strands; the second does not cross $G_1$ and has two strands, the second strand 
lying in $P_2$, while the third black loop has two strands, one lying in $P_1$ 
and not crossing $G_1$, while the other lies in $P_3$. The second black loop 
cannot cross $G_2$, for that would contradict Corollary \ref{Cor:xprod}; 
likewise, the third black loop cannot cross $G_3$. That leaves us with the first 
black loop crossing each of $G_1$, $G_2$ and $G_3$, and crossing each of them 
once, which also contradicts Corollary \ref{Cor:xprod}.
\bibliography{bm} 

\begin{thebibliography}{10}

\bibitem{chuckrow:schottky}
V.~Chuckrow.
\newblock On {S}chottky groups with applications to {K}leinian groups.
\newblock {\em Annals of Math.}, 88:47--61, 1968.


\bibitem{Gerritzen:Schottky}
L.~Gerritzen and F.~Herrlich.
\newblock The extended {S}chottky space.
\newblock {\em J. reine angew. Math.}, 389:190--208, 1988.

\bibitem{Hidalgo:NodedSchottky}
R.A. Hidalgo.
\newblock The noded {S}chottky space.
\newblock {\em London Math. Soc.}, 73:385--403, 1996.

\bibitem{Hidalgo:NodedFuchsian}
R.A. Hidalgo.
\newblock Noded {F}uchsian groups.
\newblock {\em Complex Variables}, 36:45--66, 1998.

\bibitem{J-M:geoconv}
T.~J{\o}rgensen and A.~Marden.
\newblock Algebraic and geometric convergence of {K}leinian groups.
\newblock {\em Math. Scand.}, 66:47--72, 1990.

\bibitem{J-M-M:schottky}
T.~J{\o}rgensen, A.~Marden, and B.~Maskit.
\newblock The boundary of classical {S}chottky space.
\newblock {\em Duke Math. J.}, 46:441--446, 1979.

\bibitem{K-M-S:circlepack}
L.~Keen, B.~Maskit, and C.~Series.
\newblock Geometric finiteness and uniqueness for kleinian groups with circle
  packing limit sets.
\newblock {\em J. Reine Ang. Math.}, 436:209--219, 1993.

\bibitem{K-M:pinched}
I.~Kra and B.~Maskit.
\newblock Pinched two component {K}leinian groups.
\newblock In {\em Analysis and Topology}, pages 425--465. World Scientific
  Press, 1998.

\bibitem{marden:schottky}
A.~Marden.
\newblock Schottky groups and circles.
\newblock In {\em Contributions to Analysis}, pages 273--278. Academic Press,
  New York and London, 1974.

\bibitem{bm:schottky}
B.~Maskit.
\newblock A characterization of {S}chottky groups.
\newblock {\em J. d'Analyse Math.}, 19:227--230, 1967.

\bibitem{bm:free}
B.~Maskit.
\newblock On free {K}leinian groups.
\newblock {\em Duke Math. J.}, 48:755--765, 1981.

\bibitem{bm:parelt}
B.~Maskit.
\newblock Parabolic elements in {K}leinian groups.
\newblock {\em Annals of Math.}, 117:659--668, 1983.

\bibitem{bm:book}
B.~Maskit.
\newblock {\em Kleinian Groups}.
\newblock Springer-Verlag, Berlin-Heidelberg-New York, 1988.

\bibitem{bm:combiv}
B.~Maskit.
\newblock On {K}lein's combination theorem {IV}.
\newblock {\em Trans. Amer. Math. Soc.}, 336:265--294, 1993.

\bibitem{bm:spaces}
B.~Maskit.
\newblock On spaces of classical {S}chottky groups.
\newblock {\em Contemporary Math.}, 256:227--237, 2000.

\bibitem{Sato:Augmented}
H.~Sato.
\newblock Introduction of new coordinates to {S}chottky space --- the general
  case.
\newblock {\em J. Math. Soc. Japan}, 35:23--35, 1983.

\bibitem{yamamoto:parelt}
Hiro-o Yamamoto.
\newblock Sqeezing deformations in {S}chottky spaces.
\newblock {\em J. Math. Soc. Japan}, 31:227--243, 1979.

\bibitem{yamamoto:example}
Hiro-o Yamamoto.
\newblock An example of a non-classical {S}chottky group.
\newblock {\em Duke Math. J.}, 63:193--197, 1991.

\end{thebibliography}
\bibliographystyle{plain} 
\end{document}